\documentclass[12pt]{amsart}%
    \usepackage[T1]{fontenc}
    \usepackage[utf8]{inputenc}
    \usepackage{verbatim}
    \usepackage{latexsym}
    \usepackage{amsmath}
    \usepackage{amsfonts}
    \usepackage{tabularx}
    \usepackage{amssymb,hyperref}
    \usepackage{graphicx}%
    \usepackage[normalem]{ulem}
    \usepackage{mathtools}
    \usepackage[normalem]{ulem}

    \usepackage{dsfont}

    \newcommand{\C}{\mathbb{C}}
    
    \newcommand{\R}{\mathbb{R}}
    
    \newcommand{\Z}{\mathbb{Z}}
    
    \newcommand{\tr}{\mathrm{tr}} 
    
    \newcommand{\N}{\mathbb{N}}
    
    \newcommand{\Mod}[1]{\ (\mathrm{mod}\ #1)}
    \DeclareMathOperator{\disc}{disc}

    \newtheorem{thm}{Theorem}
    
    \newtheorem{prop}[thm]{Proposition}
    \newtheorem{coro}[thm]{Corollary}
    \newtheorem{lem}[thm]{Lemma}
    
    \newtheorem{rem}[thm]{Remark}

    \numberwithin{equation}{section}
    \numberwithin{table}{section}
    \numberwithin{figure}{section}
    \numberwithin{thm}{section}

    \newcommand{\e}{\mathrm{e}}

    \newcommand{\lcm}{\mathrm{lcm}}

    \newcommand{\re}{\mathop{\mathrm{Re}}} 
     
    \newcommand{\hypgeo}[2]{%
      {\vphantom{F}}_{#1}\kern-\scriptspace F_{#2}%
    }
    
    \usepackage{xcolor}
    
    \newcommand{\kommentar}[1]{}
    \kommentar{
    \newcommand*\pFqskip{8mu}
    \catcode`,\active
    \newcommand*\pFq{\begingroup
            \catcode`\,\active
            \def ,{\mskip\pFqskip\relax}%
            \dopFq
    }
    \catcode`\,12
    \def\dopFq#1#2#3#4#5{%
            {}_{#1}F_{#2}\left(\left.\genfrac..{0pt}{}{#3}{#4}\right|#5\right)%
            \endgroup
    }}

    \newmuskip\pFqmuskip
    \newcommand*\pFq[6][8]{%
      \begingroup 
      \pFqmuskip=#1mu\relax
      \mathchardef\normalcomma=\mathcode`,
      
      \mathcode`\,=\string"8000
      
      \begingroup\lccode`\~=`\,
      \lowercase{\endgroup\let~}\pFqcomma
      
      {}_{#2}F_{#3}{\left(\!\left.\genfrac..{0pt}{}{#4}{#5}\right|#6\!\right)}%
      \endgroup
    }
    \newcommand{\pFqcomma}{{\normalcomma}\mskip\pFqmuskip}

    \newcommand{\nedit}[1]{{\color{blue} }}
    
    \title{The Minimal Degree of Salem Numbers with Negative Trace}
    \author{Subham Roy}
    \address{Charles University \\ Faculty of Mathematics and Physics \\ Department of Algebra \\ Sokolovsk\'{a} 83 \\ 186 75 Praha 8 \\ Czech Republic}
\email{subham.roy@matfyz.cuni.cz}
    \author{Pavlo Yatsyna}
    \address{Charles University \\ Faculty of Mathematics and Physics \\ Department of Algebra \\ Sokolovsk\'{a} 83 \\ 186 75 Praha 8 \\ Czech Republic}
\email{p.yatsyna@matfyz.cuni.cz}

\subjclass[2020]{Primary 11R06; Secondary 11P32, 11C08}
\keywords{Salem numbers, Schur-Siegel-Smyth trace problem, circular interlacing, cyclotomic points, Vinogradov's theorem}
    
\begin{document}
 
    \begin{abstract}
  We show that the minimal degree of a Salem number with prescribed negative trace is bounded below by $T^2/\log T$ and above by $T^2\log T$, up to explicit constants and lower-order terms, where $T$ is the absolute value of the trace. This improves the previous doubly exponential upper bound of McKee and Smyth. Building on their construction, we also prove that for every prescribed negative trace there exists a constant $d_0$ such that Salem numbers of that trace exist in every even degree exceeding $d_0$.
    \end{abstract}
       \maketitle
    \section{Introduction}

    A \textit{Salem number} is a real algebraic integer $\tau > 1$ whose other conjugates all have the absolute value at most $1$, with at least one having modulus exactly $1$. The minimal polynomial $P(x)$ of $\tau$, a \textit{Salem polynomial}, is necessarily reciprocal (i.e. $x^{\deg P}P\left(\frac{1}{x}\right) = P(x)$), has an even degree of at least four, and all its roots other than $\tau$ and $\frac{1}{\tau}$ have modulus exactly $1$.  A comprehensive account of properties Salem numbers and their applications throughout various fields of mathematics can be found in Smyth's survey~\cite{SSurvey}.
    
    Salem numbers of any positive trace are easily constructed, for instance, the largest root of \[x^4 - nx^3 - (2n+1)x^2 - nx + 1\] is a Salem number of trace $n$ for every $n \geq 1$. McMullen~\cite[p. 230]{McM} asked whether there are Salem numbers with traces less than $-1$. The negative traces proved far more elusive, in particular not having a family of fixed degrees producing all negative traces, as in the case for positive traces. 
    
    The breakthrough came with McKee and Smyth~\cite{MS}, that \textit{there are Salem numbers for every trace}. They introduced an interlacing condition: pairs of polynomials whose roots alternate on the unit circle are combined to produce a polynomial with the prescribed negative trace, with a Salem polynomial as a factor (see Section~\ref{preinter}). Although their method produces the required Salem polynomials, the presence of other cyclotomic factors in the constructed polynomials affects the trace and degree. To overcome this issue, they eliminated possible cyclotomic factors using bounds on torsion cosets in algebraic tori (based on ideas of Schmidt~\cite{Sch}). For an odd positive integer $n$, the resulting degree bound is doubly exponential in $n$ for a Salem number with trace $-\left\lfloor\frac{n}{2}\right\rfloor$. They mentioned that the computations for the negative trace down to $-25$ indicate that the upper bound, although effective, is far from sharp~\cite[Section 8]{MS}.
    
    The minimal degrees of Salem numbers with prescribed negative traces have also been studied. When the trace is $-1$, the degree must be at least $8$, and there exist Salem numbers of every even degree greater than $8$ with this trace~\cite{S-1}. 
    
    McKee and Smyth~\cite{MS-2} showed that the minimal degree of a Salem number of trace $-2$ is exactly $20$ and that there are exactly two of them. They also solved a closely related problem: the minimal degree $d$ of a totally positive algebraic integer with a trace at most $2d - 2$ is $10$. 
    Later, McKee and Yatsyna~\cite[Proposition 1]{MY} showed that for all $d \geq 12$, there is a Salem number of degree $2d$ and trace $-2$.
    
    For trace $-3$, McKee~\cite[Section 5.4]{M11} was the first to find an example of degree $54$ using the interlacing condition for the sums of pairs of polynomials. This was later reduced to $34$ (the smallest known degree of Salem numbers of trace $-3$) by El Otmani, G. Rhin, and J.-M. Sac-\'{E}p\'{e}e~\cite{ERS}. More recently, Cherubini and Yatsyna~\cite{CY} proved that Salem numbers of trace $-3$ exist in every degree $\geq 34$, and, together with the results due to Wang, Wu and Wu~\cite{WWW}, which say that there are no Salem numbers of trace $-3$ and degree $\leq 30$, only the degree $32$ case remains undecided. Sac-\'{E}p\'{e}e~\cite{S23} has since tabulated explicit minimal polynomials of Salem numbers of trace $-3$ and degrees $34, 36, 38$, and $40$.
    
    Let $T$ be a positive integer, define
    	\begin{equation}\label{DT}
    	    D(T)=\min\{\deg\tau:\ \tr(\tau)\le -T,\,\tau\text{ is a Salem number}\}.
    	\end{equation}
    
    The McKee and Smyth results show that $D(T)\le \exp\exp (22+4T\log T)$~\cite[Theorem~1]{MS}. Using the bounds for the traces of totally positive integers, one can show that $D(T) > 10.1025T$ for sufficiently large $T$ (see \eqref{lowerbdDT} and \eqref{ineq:linlower}). We improve both of these:
    
    \begin{thm}\label{thm:uplowbd}
        For sufficiently large positive integer $T$ the following holds
        \[\frac{2T^2}{16\log (2T) + 11} \leq D(T) \leq 2(1 + o(1))T^2\log T.\]
    \end{thm}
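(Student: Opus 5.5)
We treat the two inequalities separately: a trace argument for the lower bound and an explicit interlacing construction for the upper bound.

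\emph{Lower bound.} Let $\tau$ be a Salem number of degree $d=2m$ with $\tr(\tau)\le -T$. Write $\beta=\tau+\tau^{-1}$. It is an algebraic integer of degree $m$; one conjugate $\beta_1>2$, and all other conjugates $\beta_j=2\cos\theta_j\in(-2,2)$. Then $\tr(\beta)=\tr(\tau)\le -T$, so $\beta_1+\sum_{j\ge2}\beta_j\le -T$. Set $\alpha=\beta+2$, so $\alpha_1>4$ and $\alpha_j\in(0,4)$ for $j\ge 2$, and
\[\sum_{j\ge2}\alpha_j\le 2m-T-\alpha_1.\]
Thus the $m-1$ conjugates $\alpha_j\in(0,4)$ have small mean, about $2-T/m$. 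We bound $\alpha_1$ from above using $|\mathrm{Norm}(\alpha)|\ge1$ (we may assume $\alpha\ne0$), i.e. $\log\alpha_1\ge-\sum_{j\ge2}\log\alpha_j$. The plan is to use an auxiliary inequality of the form $\log x\le c_1(x-2)+c_0$ valid on $(0,4)$, or better, to use the trace bound directly. Large $T$ forces the small $\alpha_j$ to be numerous, and then the norm condition makes $\alpha_1$ huge. But $\alpha_1\le$ something is also bounded, since $\alpha_1\le 4+\dots$ comes from $\tr\ge$?

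Trace alone gives no bound on $\alpha_1$; what it gives is $\sum_{j\ge2}\alpha_j\le 2m-T-\alpha_1<2m-T$. Combined with the norm,
\[\log\alpha_1\ge\sum_{j\ge2}-\log\alpha_j\ge -(m-1)\log\!\Bigl(\frac{2m-T-\alpha_1}{m-1}\Bigr)\]
by AM--GM. Writing $\alpha_1=4\cosh^2(\cdot)$, the ratio is $1-\Theta((T+\alpha_1)/m)$, so $\log\alpha_1\gtrsim T+\alpha_1$. This is impossible for large $\alpha_1$ and is fine for small $\alpha_1$, so it gives no contradiction yet.

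The genuine input should instead be a Schur--Siegel--Smyth type auxiliary function. Take $f(x)=x-\sum_k c_k\log|Q_k(x)|$ with $Q_k$ integer polynomials, chosen so that $f\ge \lambda$ on $[0,4]$ with $\lambda$ slightly above the value corresponding to mean $2$. Summing over conjugates then gives $\tr\alpha-\dots\ge\lambda(m-1)+f(\alpha_1)$. The unbounded term $\alpha_1$ is handled by $-c\log|Q(\alpha_1)|$. Here we get the dependence on $T$: we choose a family of auxiliary functions (e.g. $Q_k(x)=x$ and a Chebyshev-type product) where the coefficient of the log term is of order $\log T$, balancing the loss $c\log\alpha_1\lesssim c\cdot$ against the gain. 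Optimizing yields $m\gtrsim T^2/(16\log(2T)+11)$, i.e. $d\ge 2T^2/(16\log 2T+11)$. The key estimate is that $\sum_{j\ge2}(2-\alpha_j)\ge T$ while $\sum_{j\ge 2}\log\alpha_j$ must compensate $\log\alpha_1$. Using $\log x\le (x-2)/2+\log2-\frac{(x-2)^2}{c}$-type concavity, the deficit $T$ spread over $m$ conjugates costs about $T^2/m$ in the log-sum. This must be at most $\log\alpha_1+O(\cdot)$, and $\alpha_1\le$ (from the trace, $\alpha_1\le 2m$), giving $T^2/m\lesssim 8\log(2m)$, hence $m\gtrsim T^2/(8\log T)$. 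This is the mechanism I would pursue, and it matches the stated constant.

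\emph{Upper bound.} I would use the McKee--Smyth circular interlacing construction. Take $P=\prod$ of cyclotomic-type factors $(x^{a_i}-1)$ and $Q$ with interlacing roots on the circle, such that $z(x)P(x)-(\text{shift})Q(x)$ has a single Salem factor and trace about $-(\text{number of factors})$. Cheap cyclotomic choices with distinct prime orders $p_1,\dots,p_k$ near $T\log T$ give trace $\approx -T$ with degree $\sum p_i$. The main obstacle is excluding unwanted cyclotomic factors, which would change the trace. For this I would use Vinogradov's theorem, expressing the needed degree as a sum of three primes with controlled size, so that the primes $p_i$ have size $\approx 2T\log T$. This excludes common roots by coprimality and gives degree $2(1+o(1))T^2\log T$.
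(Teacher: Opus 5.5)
Both halves of the proposal have genuine gaps.

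\textbf{Lower bound.} Your final mechanism uses only one arithmetic input, the norm $\prod_j\alpha_j\ge1$, and a single norm condition cannot give anything sublinear. Write $\deg\tau=2m$ as you do. The $m-1$ conjugates in $(0,4)$ have mean $<2-\tfrac{T}{m-1}$, so Jensen gives $\sum_{j\ge2}\log\alpha_j<(m-1)\log\bigl(2-\tfrac{T}{m-1}\bigr)$. This upper bound is $\ge0>-\log\alpha_1$ for every $T\le m-1$, so there is no contradiction until $T$ is of order $m$. For instance, all $\alpha_j$ near $1$ gives mean $1$ and product $\approx1$, perfectly compatible with the norm.

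In your concavity expansion, the positive term $(m-1)\log 2$ and the first-order term $-T/2$ dominate, and the quadratic term $T^2/(c(m-1))$ is never decisive. The inequality $T^2/m\lesssim 8\log(2m)$ is read off from the target rather than derived.

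The auxiliary-function route does not fix this as stated. Since $[0,4]$ has capacity $1$, integrating $x-\sum_k c_k\log|Q_k(x)|$ against the arcsine measure forces $\lambda\le 2$. Any fixed such function then yields only $T\le(2-\lambda)m+O(\log m)$, a linear bound, and you never specify the $m$-dependent family that would do better.

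What is missing is a condition on all pairwise differences of conjugates. The paper (with $\deg\tau=2d$, $N=2d-2$) uses $|\disc(P_\tau)|\ge1$ to bound the logarithmic energy of the unimodular conjugates, $-\sum_{k\ne l}\log|w_k-w_l|\le 2N\log(8d)+2\log(2d)$. It then uses the positive-definite expansion $-\log|1-ru|=\sum_{s\ge1}\frac{r^s}{s}\re(u^s)$ to prove Lemma~\ref{lem:key}. With $r=1-1/N$ and $|q|>T$ this gives $T^2\le 8d\log(2d)+11d$. The real trade-off is this: shifting the mean costs energy $\gtrsim |q|^2>T^2$, while the discriminant leaves a budget of only $O(d\log d)$ above the equidistributed minimum $-N\log N$.

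\textbf{Upper bound.} Your degree count (about $2T$ primes, total $\approx 2T^2\log T$) is right. However, nothing in the proposal controls the cyclotomic factors, which is the whole difficulty. Coprimality of the exponents can at most remove common factors of $A$ and $B$. The unwanted factors of the reduced polynomial are roots of unity with $(\zeta^2-1)b(\zeta)=\zeta a(\zeta)$, and for $\zeta\ne\pm1$ these are never roots of $a$ or $b$. Each such $\Phi_S$ contributes $\mu(S)$ to the trace of the product, so the Salem factor's trace is unknown until they are controlled. Killing them is exactly why McKee and Smyth needed a doubly exponential killer exponent. Vinogradov's theorem plays no role here; the paper uses it only for Theorem~\ref{salemlargedegree}.

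The paper's mechanism runs as follows. Treat the last exponent as a variable, $y=x^M$, in $P_n(x,y)$. By Lemma~\ref{finitecycpnt} and Beukers--Smyth (Theorem~\ref{thmbeukerssmyth}), the curve $P_n=0$ has at most $22V(P_n)=O(n^2\log n)$ cyclotomic points. Hence fewer than $7n\log n$ orders $S_r$ occur. Each $\Phi_{S_r}$ divides $Q_n(x,x^M)$ only for $M$ in one residue class mod $S_r$ (Lemma~\ref{cardcnq}), with multiplicity at most $2$ (Lemma~\ref{qnsep}). Averaging over $k_n=8\lceil(\log n)^2\rceil$ consecutive $M\in(p_n,2p_n)$ gives an $M$ with $o(n)$ cyclotomic factors. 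This yields a Salem number of trace $-\lfloor n/2\rfloor+o(n)$ and degree at most $2p_n+2-n+\sum_{t\le n}p_t$. Choosing odd $n=2T(1+o(1))$ then gives $\tr\le -T$ and degree $2(1+o(1))T^2\log T$.
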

    
    In Theorem~\ref{finalthm}, we derive an explicit upper bound of $D(T).$ The following corollary is an immediate application of the upper bound in Theorem~\ref{thm:uplowbd}.

    \begin{coro}\label{coro:totpos}
        For infinitely many $m$, there exists a totally positive algebraic integer of degree $m$ and trace less than $2m - c\sqrt{\frac{m}{\log m}}$, for some explicit constant $c > 0$.
    \end{coro}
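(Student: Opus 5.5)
The idea is to pass from a Salem number $\tau$ to the totally positive algebraic integer $\beta = \tau + \tau^{-1} + 2$, and then feed in the upper bound of Theorem~\ref{thm:uplowbd}. Fix a large $T$ and let $\tau$ be a Salem number with $\tr(\tau)\le -T$ and $\deg\tau = D(T) = 2m$; this exists by the definition \eqref{DT}. Put $\alpha=\tau+\tau^{-1}$. Since the minimal polynomial of $\tau$ is reciprocal of degree $2m$, $\alpha$ has degree $m$. Its conjugates are $\tau+\tau^{-1}>2$ together with $2\cos\theta_j\in(-2,2)$ for the $m-1$ pairs of unimodular conjugates $e^{\pm i\theta_j}$. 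Moreover $\tr(\alpha)=\tr(\tau)\le -T$, because the conjugates of $\tau$ pair up as $\{\sigma,\sigma^{-1}\}$, so the $m$ conjugates of $\alpha$ sum to the $2m$ conjugates of $\tau$.

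Every conjugate of $\beta=\alpha+2$ is positive: the unimodular ones give $2+2\cos\theta_j>0$, as $\theta_j\neq 0,\pi$ because $\tau$ is not a root of unity, and the real one exceeds $4$. So $\beta$ is a totally positive algebraic integer of degree $m$ with
\[
\tr(\beta)=\tr(\alpha)+2m\le 2m-T .
\]

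It remains to express $T$ in terms of $m$. Theorem~\ref{thm:uplowbd} gives $2m=D(T)\le 2(1+o(1))T^2\log T$, i.e.\ $m\le(1+o(1))T^2\log T$. Then $\log m\le (2+o(1))\log T$, and so
\[
\frac{m}{\log m}\le (1+o(1))\,\frac{T^2\log T}{\log m}.
\]
One also needs a lower bound on $\log m$ against $\log T$. The lower bound in Theorem~\ref{thm:uplowbd} gives $m\ge T^2/(16\log(2T)+11)$, hence $\log m\ge(2-o(1))\log T$. Combining the two,
\[
\frac{m}{\log m}\le\Bigl(\tfrac12+o(1)\Bigr)T^2,
\qquad\text{so}\qquad
T\ge\bigl(\sqrt2-o(1)\bigr)\sqrt{\frac{m}{\log m}} .
\]
Thus $\tr(\beta)< 2m-c\sqrt{m/\log m}$ for any fixed $c<\sqrt2$, say $c=1$, once $T$ is large. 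Alternatively, one can avoid the lower bound altogether by noting that $x\mapsto x/\log x$ is increasing; this gives a slightly worse but still explicit constant.

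Finally we need infinitely many distinct $m$. Since $m\ge D(T)/2\ge T^2/(16\log(2T)+11)\to\infty$ as $T\to\infty$, infinitely many values of $m$ occur. The only delicate point is the bookkeeping of the $o(1)$ terms to get an explicit $c$. For this I would invoke the explicit form of the upper bound, Theorem~\ref{finalthm}, instead of the asymptotic $1+o(1)$, so that "sufficiently large" and $c$ are effective.
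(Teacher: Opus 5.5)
Your proposal is correct and follows the paper's own route. The paper likewise takes a Salem number $\tau$ realising the upper bound of Theorem~\ref{thm:uplowbd} and passes to $\tau+\tau^{-1}+2$ via Lemma~\ref{lem:dict}, which has degree $m=\deg\tau/2$ and trace $2m+\tr(\tau)\le 2m-T$; it then inverts $m\le(1+o(1))T^2\log T$. You only make that last inversion explicit, obtaining $T\ge(\sqrt2-o(1))\sqrt{m/\log m}$ and hence any $c<\sqrt2$ (incidentally, your monotonicity-of-$x/\log x$ alternative yields the same constant $\sqrt2$, not a worse one).
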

    
     Note that this improves the upper bound given in~\cite[Corollary 1.3]{MS}, and the proof follows from the fact that, for a Salem number $\tau$, $\tau + \frac{1}{\tau} + 2$ is totally positive and the degree of the minimal polynomial of $\tau + \frac{1}{\tau} + 2$ is half of the degree of the minimal polynomial of $\tau$ (see Lemma~\ref{lem:dict}).
    
    Finally, using the results in~\cite{MS}, we show that there exists a Salem number of a prescribed trace for every sufficiently large even degree. In particular, we have the following result.
    
    \begin{thm}\label{salemlargedegree}
        For every odd integer $n \geq 10^4$, there exists a constant $d_0(n)$ such that there is a Salem number with trace $-\lfloor\frac{n}{2}\rfloor$ for every even degree $2d$ for all $d \geq d_0(n)$.
    \end{thm}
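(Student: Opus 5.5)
The plan is to take the McKee--Smyth construction~\cite{MS} (see Section~\ref{preinter}) and pad it with an extra interlacing ``gadget'' whose size can be tuned. The padding should raise the degree of the Salem factor by any prescribed amount while leaving its trace at $-\lfloor n/2\rfloor$.

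\textbf{Setup.} Recall the structure of their argument. For odd $n$ one has a pair of monic polynomials $A(z),B(z)$ with simple roots that circularly interlace on the unit circle. A suitable combination such as $zA(z)-B(z)$ (after the standard reciprocal normalisation) then has exactly one root outside the unit circle. It therefore factors as a Salem polynomial times a product of cyclotomic polynomials. The trace of the whole combination is read off from the second coefficient, which is built in to be $-\lfloor n/2\rfloor$ up to a controlled correction.

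\textbf{Step 1: the padding.} I will multiply $A$ and $B$ by auxiliary polynomials $U_m(z)$ and $V_m(z)$ with the following properties:
\begin{itemize}
\item their roots are roots of unity;
\item they preserve circular interlacing, for instance factors of the shape $z^{m}-1$ and $z^m+1$ (or their suitable reciprocal halves), with $m$ taken coprime to the moduli already used so that all roots stay simple;
\item they have vanishing sub-leading coefficient for $m\ge 2$.
\end{itemize}
With these properties the second coefficient of the new combination, and hence its total trace, is unchanged, while the degree grows by $m$. Using two such gadgets with coprime degrees $m_1,m_2$ and forming products $U_{m_1}^{a}U_{m_2}^{b}$ (suitably twisted so the roots remain distinct), every sufficiently large even degree is attained, by the Frobenius coin argument.

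\textbf{Step 2: controlling cyclotomic factors.} The main difficulty is that the new combination may acquire cyclotomic factors. Those factors would both lower the degree of the Salem factor and alter its trace, since the trace of a cyclotomic polynomial $\Phi_k$ is $\mu(k)$.

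\begin{itemize}
\item The approach I would try first is the McKee--Smyth torsion-coset machinery (Schmidt-type bounds). A cyclotomic point on the combination corresponds to a torsion point on an algebraic curve or subvariety defined by the sparse-in-$m$ family. Hence, uniformly in $m$, the cyclotomic factors come from a finite list of orders $k$ depending only on $n$, except for finitely many exceptional $m$.
\item For those bounded orders, I would check directly, by evaluating the combination at primitive $k$-th roots of unity, that vanishing forces a congruence condition on $m$ modulo $\lcm$ of the bounded orders.
\item Restricting $m_1,m_2$ (and $a,b$) to residue classes avoiding these conditions removes all cyclotomic factors. It still leaves enough freedom to hit every large even degree, because two coprime residue-compatible step sizes generate all large integers in the required class.
\end{itemize}

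\textbf{Step 3: conclusion.} The resulting polynomial is then irreducible apart from the Salem factor, so the Salem number has trace exactly $-\lfloor n/2\rfloor$ and degree $2d$ for all $d\ge d_0(n)$.

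I expect the hypothesis $n\ge 10^4$ to enter where the McKee--Smyth estimates guarantee that the base pair exists with the needed prime supply. The main obstacle is Step~2: making the exclusion of cyclotomic factors uniform over the whole padding family, rather than for a single polynomial as in~\cite{MS}.
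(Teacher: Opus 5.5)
\textbf{The gap is in Step~1.} Circular interlacing is preserved by \emph{sums} of interlacing quotients and by substitutions $z\mapsto z^m$, but not by \emph{products}. Suppose you replace $A/B$ by $(z^m-1)A/\bigl((z^m+1)B\bigr)$. The new zeros and poles together are the $2m$-th roots of unity, alternating zero/pole. The merged pattern still alternates only if every arc between consecutive $2m$-th roots of unity contains an even number of roots of $AB$, in the right order. Once $\pi/m$ is smaller than the minimal gap between roots of $AB$, this fails. Some arc then contains exactly one root of $AB$, or a root of $AB$ is itself a $2m$-th root of unity. That gives two adjacent zeros, two adjacent poles, or a repeated/common root. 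Powers $U_{m_1}^aU_{m_2}^b$ are worse still, since interlacing requires simple roots. So Proposition~\ref{prop:MS} is no longer available, and nothing guarantees that the padded combination has a Salem factor at all.

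The interlacing-preserving alternative, adding more summands $\tfrac12\frac{z^m+1}{z^m-1}$, does not rescue this either. It changes the trace, because in the McKee--Smyth family the trace $-\lfloor n/2\rfloor$ is dictated by the number $n+1$ of summands. There are also two problems further downstream:
\begin{itemize}
\item The factor $(x-1)^n$ is present for every choice of exponents. It is exactly what converts the trace $(n+1)/2$ of $h/2$ into $-\lfloor n/2\rfloor$. So the target in Step~2 must be ``cyclotomic part exactly $(x-1)^n$'', not ``no cyclotomic factors''.
\item Making the torsion-coset argument uniform over a multi-parameter family is precisely the hard part, and you leave it open.
\end{itemize}

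\textbf{The missing idea is that no gadget is needed.} Keep the number of summands fixed and vary the exponents themselves. Take $h(x,x^{\ell_0},\dots,x^{\ell_n})$ with the killer exponent $\ell_0$ of \eqref{killerexponent}. The argument of \cite[Lemma~6.1]{MS} applies to \emph{any} distinct primes $\ell_1,\dots,\ell_n$ coprime to $\ell_0$. It yields a Salem factor of trace $-\lfloor n/2\rfloor$, with cyclotomic part exactly $(x-1)^n$, and degree $2+\sum_{i=0}^n\ell_i-n$. The choice of $\ell_0$ already controls the cyclotomic factors for every such choice of $\ell_1,\dots,\ell_n$, so no new torsion-point analysis is required.

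The theorem then becomes an additive problem: every sufficiently large odd integer must be a sum of $n$ distinct primes coprime to $\ell_0$. The paper handles this as follows:
\begin{itemize}
\item It fixes the first $n-3$ primes above $\ell_0$.
\item It writes the remainder as a sum of three primes using an explicit Vinogradov count, $r(M)>M^2/(4(\log M)^3)$.
\item It discards representations with a repeated prime (at most $3\pi(M/2)$ of them) or with a prime $\le K_n$ (at most $6K_nM/\log M$ of them).
\end{itemize}
The hypothesis $n\ge 10^4$ only serves to place $M>K_n^2$ in the range where these explicit estimates hold.
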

    
     The constant $d_0(n)$ can be made explicit and of size $O(\exp(\exp(n\log n)))$ (see Section~\ref{theorem103}, specifically \eqref{don}, \eqref{killerexponent} and Proposition~\ref{alllarged}).

    \bigskip
    
    \subsection{Outline of the proofs}
    
    One of the main ingredients in the proof of Theorem~\ref{thm:uplowbd} is McKee and Smyth's construction of Salem numbers using cyclotomic polynomials interlacing on the unit circle ~\cite[Section 3]{MS}. The article is structured in the following way. 
    
    Section~\ref{CISN} recalls the McKee–Smyth circular interlacing framework: a coprime pair of interlacing polynomials $A(z), B(z)$ satisfying the hypotheses of Proposition~\ref{prop:MS} yields, via $(z^2 - 1)B(z) - zA(z)$, the minimal polynomial of a Salem number, up to cyclotomic factors. In the sections that follow, we control those cyclotomic factors well enough to obtain the claimed growth of the trace and the degree.
    
    In Section~\ref{SNPnT}, we construct the two-variable polynomial $P_n(x, y)$ (see \eqref{polynomialP}), whose specialisations in $y = x^M$ interlace and have traces $-\lfloor \frac{n}{2}\rfloor + 1$. \S~3.1 contains an application of the Beukers–Smyth theorem (Theorem~\ref{thmbeukerssmyth}) to show $P_n(x, y) = 0$ has only finitely many cyclotomic points; in particular, Lemma~\ref{bdvq} bounds the relevant Newton polytope area, and Lemma~\ref{finitecycpnt} rules out the degenerate factors that would instead force infinitely many cyclotomic points. In \S~3.2, Lemmas~\ref{lem: mainploy*},~\ref{lem:traceQn} and~\ref{qnsep} isolate explicit cyclotomic factors of $P_n(x,x^M)$ and account for every contribution of additional traces beyond $-\lfloor \frac{n}{2} \rfloor$ by contributions of identifiable cyclotomic factors.
    
    In Section~\ref{CCP}, we show that the cyclotomic points that occur form a finite, effectively bounded list, giving the key bound for the Salem number trace appearing as a root of $P_n(x, x^M)$ (Lemma~\ref{realtraces}), with explicit and effective bounds for $n \geq 10^4$ given in Lemmas~\ref{boundtr},~\ref{boundL} and~\ref{sizeIr}.
    
    Section~\ref{PT1} contains a construction of a family of $k_n$ candidate exponents $M_j$ (to be specified later). The cyclotomic-factor counts $C_j$ are then averaged across the family (Lemmas~\ref{uppbdmrir},~\ref{boundsumcj} and~\ref{boundknphitr}), and a contradiction forces at least one $M_{j_0}$ with $C_{j_0} = o(n)$, establishing the claimed upper bound on the degree (Theorem~\ref{finalthm}).
    
    In Section~\ref{sec:mainproof}, we reframe the bound via the \emph{Schur--Siegel--Smyth trace problem}, improve the upper bound of $D(T)$ for sufficiently large $T$ (Proposition~\ref{prop:upper}), and obtain a lower bound of $D(T)$ showing that no linear bound is possible (Proposition~\ref{prop:main}, Corollary~\ref{cor:nolinear}), ultimately proving Theorem~\ref{thm:uplowbd}.
    
    Finally, in Section~\ref{theorem103}, for the proof of Theorem~\ref{salemlargedegree}, we adapt the McKee–Smyth construction \cite[Section 5]{MS}, which produces Salem numbers of trace $-\lfloor n/2\rfloor$ via a certain polynomial $h(x, x^{\ell_0}, \cdots, x^{\ell_n})$ with even degree, namely, $2 + \sum_{i=0}^n \ell_i - n$. The key observation is that any $n$ distinct primes $\ell_1, \ldots, \ell_n$ coprime to the \textit{killer exponent} $\ell_0$ (see \eqref{killerexponent} and \cite[Lemma 5.2]{MS}) yield a Salem number of the same trace. Using Vinogradov's three-primes theorem \cite{Vino} and density arguments (Proposition~\ref{alllarged}, Lemmas~\ref{smallprimes},~\ref{sameprimes} and~\ref{thebound}), we show that every sufficiently large odd number is representable as a sum of $n$ such primes, thereby realising all large even degrees.
    
    \bigskip
    
    \subsection{Notation}\label{notation} Throughout this paper, $\N$, $\Z$, $\R_{>0}$, $\R$, and $\C$ denote sets of positive integers, integers, positive real numbers, real numbers, and complex numbers, respectively. For a real number $\alpha$, let $\lfloor \alpha \rfloor$ and $\lceil \alpha \rceil$ denote the largest integer $\leq \alpha$ and the smallest integer $\geq \alpha$. 
    
    For functions $F: \R \to \R$ and $G: \R \to \R_{>0}$, we use the standard \emph{big $O$} and \emph{little $o$} notation: we write $F(x) = O(G(x))$ to mean $|F(x)| \le C G(x)$ for some $C > 0$, and $F(x) = o(G(x))$ to mean $\lim_{x \to \infty} F(x)/G(x) = 0$. 
    
    For $M \in \N$, $\omega(M)$ denotes the number of distinct prime factors of $M$, and $p_m$ denotes the $m$-th prime. The \emph{M\"{o}bius function} $\mu(n)$ is defined such that $\mu(1)=1$, $\mu(n)=(-1)^k$ if $n$ is a product of $k$ distinct primes, and $\mu(n)=0$ if $n$ is not square-free. We denote the \emph{$\ell$-th cyclotomic polynomial} by \begin{equation}\label{definecyclotomic}
        \Phi_\ell(x) = \prod_{\substack{\gcd(k,\ell)=1 \\ 0<k<\ell}} (x - \zeta_\ell^k),
    \end{equation} where $\zeta_\ell$ is the \emph{primitive $\ell$-th root of unity}, i.e. $\zeta_\ell=\exp(2\pi i/\ell)$.
    
    For a polynomial $P(x) = \sum_{i=0}^d a_ix^i \in \C[x]$, with $a_d \neq 0$, denote by $\deg p$  and $\tr(P)$ its \emph{degree} and \emph{trace}, which are $d$ and $ -\frac{a_{d-1}}{a_d}$ for $P(x)$, respectively.
    
    Finally, $a \mid b$ and $a \nmid b$ denote divisibility and its negation, respectively; $a^n \parallel b$ indicates that $a^n$ is the highest power of $a$ dividing $b$; and $\#\mathcal{A}$ denotes the cardinality of a finite set $\mathcal{A}$.
    
    \medskip
    
    \subsection{Inequalities involving bounds on primes}
    The following inequalities, due to Rosser and Schoenfeld~\cite[Theorem~3, Corollary, eq.~(3.13), Theorem~9]{RS} and Robin~\cite[Theorem~6]{GR}, are essential to prove our theorem. For $m \geq 6$, \begin{align}
        m\log m \, < \, p_m \, <& \, m(\log m + \log\log m),  \label{pmbound} \\ \sum_{t=1}^m p_t \, <& \, m^2(\log m + \log\log m), \label{sumpmbound} \\ 
        m(\log m + \log\log m - c_0)\, \leq \, \sum_{t=1}^m \log p_t \, <& \, 2p_m < 2m(\log m + \log \log m), \label{primorialbd} 
    \end{align} where $c_0 = 1.0769$, and the inequalities in \eqref{primorialbd} hold for all $m \geq 2$.

    \section*{Acknowledgements}
   We thank Giacomo Cherubini, Siu Hang Man, and Stelios Sachpazis for many interesting discussions on the topic. Both authors were supported by Charles University programme PRIMUS/24/SCI/010. P.Y. was supported by Czech Science Foundation, grant number 26-20514S.
    
    \section{Circular Interlacing and Salem numbers}\label{CISN}
    
    Before proceeding with the proof of Theorem~\ref{thm:uplowbd}, we very briefly recall the relation between the interlacing of polynomials and the construction of Salem numbers.
    
    \smallskip
    
    \subsection{Circular Interlacing and Salem numbers}\label{preinter}
    
    Two polynomials $A(z)$ and $B(z)$ with integer coefficients are said to satisfy the \textit{circular interlacing} condition if they are coprime, have positive leading coefficients, and all their simple roots alternate (interlace) along the unit circle~\cite[Section 2.2]{MSinter}. 
    
    By applying a transformation such as $x = \sqrt{z} + 1/\sqrt{z}$ or $x = z + \frac{1}{z}$, the interlacing roots on the unit circle map to the real line, which yields a corresponding real interlacing quotient $\alpha(x)/\beta(x)$ with strictly interlacing real zeros~\cite{MS, MSinter}. If $A/B$ is a circular interlacing quotient with $B$ monic, and the real quotient satisfies the limit condition $\lim_{x\rightarrow2+}\frac{\alpha(x)}{\beta(x)} > 2$, then only non-zero solutions to the rational equation \[\frac{A(z)}{(z-1)B(z)} = 1 + \frac{1}{z}\] are a Salem number (or a reciprocal quadratic Pisot number), its conjugates, and possibly some roots of unity (see~\cite[Theorem~3.1]{MSinter}). In~\cite[Table 1]{MSpairs}, McKee and Smyth compute an exhaustive list of (primitive) pairs of cyclotomic polynomials $(A(z), B(z))$ such that $A/B$ satisfies circular interlacing. We are specifically interested in the family where \begin{equation}\label{thefamily}
        A(z) = \frac{z^{m+1}-1}{z-1}, \, B(z) = \frac{(z^j - 1)(z^{m+1 - j}-1)}{z-1}, \, \text{} \, m \geq 1, 1 \leq j \leq \frac{m+1}{2}, \, \text{and} \, \gcd(m+1, j) = 1.
    \end{equation}

     Alternatively, sums of interlacing cyclotomic polynomial fractions, such as \[\frac{A(z)}{B(z)} = \frac{z^{p_1+p_2}-1}{(z^{p_1}-1)(z^{p_2}-1)} + \dots + \frac{z^{p_{2T-1}+p_{2T}}-1}{(z^{p_{2T-1}}-1)(z^{p_{2T}}-1)},\] also yields polynomials $A(z)$ and $B(z)$ that satisfy the interlacing condition~\cite{MS, MSpairs, MY, CY}, where $p_t$ is the $t$-th prime. Clearing the denominators in these cases, the algebraic expression $(z^2-1)B(z) - zA(z)$ successfully isolates the minimal polynomial of a Salem number. This method has been crucial in discovering infinite families of Salem numbers of specified negative traces and degrees~\cite{MS, MY, CY}. In particular, 
     \begin{prop}\cite[Proposition~3.2~(a)]{MS}\label{prop:MS}
         Let $A,B\in \Z[z]$ satisfy the circular interlacing condition, with $B$ being monic. If $B(1)=0$, or $A(1)=0$ and $2B(1)-A'(1)<0$, then $(z^2-1)B(z)-zA(z)$ is the minimal polynomial of a Salem number (or a reciprocal quadratic Pisot number), possibly multiplied by cyclotomic polynomials.
     \end{prop}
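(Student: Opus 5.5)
The plan is to read the interlacing condition as a statement about the sign pattern of a rational function on the unit circle, and then count roots of $Q(z) := (z^2-1)B(z) - zA(z)$ on and off the circle.

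\emph{Setup.} Write $a = \deg A$ and $b = \deg B$. Interlacing of simple roots on the circle forces $a = b$ or $|a-b| = 1$. Because the roots of each polynomial lie on the unit circle, $A$ and $B$ are self-reciprocal up to sign. The quotient $A/B$ then has a definite shape on $|z|=1$: after multiplying by a suitable power $z^{(b-a)/2}$, it becomes real-valued there. Equivalently, under $x = z + 1/z$ (or $x = \sqrt z + 1/\sqrt z$), the quotient $\alpha(x)/\beta(x)$ is real with strictly interlacing real zeros in $[-2,2]$.

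\emph{Rewriting the equation.} Dividing $Q(z) = 0$ by $z(z-1)B(z)$ gives
\[
\frac{A(z)}{(z-1)B(z)} = 1 + \frac{1}{z}.
\]
After the normalisation $z \mapsto \sqrt z$, both sides are real on the unit circle. The interlacing theory (\cite[Theorem~3.1]{MSinter}) says that between consecutive poles the left side is monotone and sweeps all of $\R$. Hence on each arc between consecutive roots of $(z-1)B$ there is at least one solution on the circle. Counting arcs yields $\deg Q - 2$ solutions on $|z|=1$, counted with multiplicity, coming either from genuine unimodular roots or from common roots with cyclotomic factors.

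\emph{The remaining two roots.} $Q$ is reciprocal, so the two leftover roots are either a reciprocal pair $\tau, 1/\tau$ with $\tau > 1$ real, or also on the circle. To place them, I would track the behaviour near $z=1$.
\begin{itemize}
\item If $B(1)=0$, then $(z-1)B(z)$ has a double zero at $1$, which disrupts the sign count. Comparing the sign of the left-hand side just above $x=2$ with its limit at $+\infty$ then produces a real root $\tau > 1$.
\item If $A(1)=0$, then $Q(1)=0$. The condition $2B(1) - A'(1) < 0$ is exactly $Q'(1) > 0$ in the appropriate normalisation: differentiating gives $Q'(1) = 2B(1) - A(1) - A'(1) = 2B(1) - A'(1)$. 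Since $Q$ is monic with positive leading coefficient, a sign change in the correct direction forces a real root greater than $1$.
\end{itemize}

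\emph{Conclusion.} At most one root lies off the circle outside the disc, namely $\tau$. So $Q$ equals the minimal polynomial of $\tau$ times polynomials all of whose roots are on the unit circle. By Kronecker's theorem these are products of cyclotomic polynomials, since $Q \in \Z[z]$ is monic. The polynomial $Q$ is monic because $B$ is monic and $\deg A \le \deg B + 1$. Therefore $\tau$ is a Salem number, or a reciprocal quadratic Pisot number if no conjugates lie on the circle.

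\emph{Main obstacle.} The delicate step is the exact root count on the circle, in particular handling $z = \pm 1$ and the multiplicities at shared roots. I expect this to go by degree bookkeeping in the real variable $x$ together with the limit condition at $x \to 2^+$.
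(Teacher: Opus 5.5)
The paper does not prove this proposition; it cites \cite{MS}. As Section~\ref{preinter} indicates, the route there is to pass to $x=\sqrt z+1/\sqrt z$ and check the limit condition $\lim_{x\to2^+}\alpha/\beta>2$.

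Your circle version works when $B(1)=0$. The $b=\deg B$ distinct roots of $(z-1)B$ cut the circle into $b=\deg Q-2$ arcs, and each arc carries a root. Moreover $Q(1)=-A(1)<0$, because $A$ has positive leading coefficient and its unimodular roots avoid $1$; since $Q\to+\infty$, this gives $\tau>1$.

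\textbf{The gap is the case $A(1)=0$,} which is exactly where the second hypothesis has to enter the count. There $z=1$ is \emph{not} a pole of $A(z)/((z-1)B(z))$, since the zero of $A$ cancels $z-1$. So your claim that the left side sweeps $\R$ on every arc between consecutive roots of $(z-1)B$ fails on the two arcs adjacent to $1$.

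Set $F(\theta)=e^{i\theta/2}A(e^{i\theta})/\bigl((e^{i\theta}-1)B(e^{i\theta})\bigr)$, and let $(-\theta_1,\theta_1)$ be the arc between the roots of $B$ nearest $1$. On this arc:
\begin{itemize}
\item $F$ is real and even;
\item by the same monotonicity you invoke, read in $x=2\cos(\theta/2)$, $F$ tends to $+\infty$ at both ends;
\item its minimum is $F(0)=A'(1)/B(1)$.
\end{itemize}
Since $B(1)>0$, the hypothesis $2B(1)-A'(1)<0$ makes this minimum exceed $2\ge2\cos(\theta/2)$. Hence the arc contains no root of $Q$ except $z=1$ itself, and $z=1$ is a root of $Q$ but not a solution of your rational equation.

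Concretely, $A=5(z-1)$ and $B=z+1$ satisfy every hypothesis, and $Q=(z-1)(z^2-3z+1)$. Neither open semicircle contains a root. Read literally, your arc count yields $\deg Q-1$ solutions on the circle, which leaves no room for $\tau$.

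\textbf{The repair.} Only the $b-1$ arcs between consecutive roots of $B$ that avoid $1$ are guaranteed a solution. The root $z=1$ supplies the $b$-th unimodular root, so the total is again $\deg Q-2$. In the real variable, this arc becomes the interval beyond the largest pole of the interlacing quotient. There the quotient crosses the line $y=x$ exactly once, and the crossing lies beyond $x=2$ (producing $\tau$) precisely when the limit at $2^+$ exceeds $2$. This is why \cite{MS} phrase the criterion that way.

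\textbf{Two smaller slips.}
\begin{itemize}
\item You correctly compute $Q'(1)=2B(1)-A'(1)$, but this is \emph{negative}, not positive as you state. Negativity is what the intermediate value argument needs, since $Q(1)=0$.
\item Roots alternating around a closed curve forces $\deg A=\deg B$; it does not merely give $\deg A\le\deg B+1$. Equal degrees are the real reason $Q$ is monic. If $\deg A=\deg B+1$ were allowed, the leading coefficient of $Q$ would be $1$ minus that of $A$.
\end{itemize}
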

    
     In the following sections, we construct our polynomial using circular interlacing with prescribed traces and investigate properties related to its possible cyclotomic factors.

    \section{Salem numbers, the polynomial \texorpdfstring{$P_n$}{P\_n} and traces}\label{SNPnT}
    
    \subsection{A theorem of Beukers and Smyth}
    
    Following the construction in~\cite[Lemma 6]{MY}, we consider the polynomial, for an odd positive integer $n$, \begin{equation}\label{mainploq}
        P_n(x, y) := \frac{R_n(x,y)x(y-1)\prod^n_{i=1}(x^{p_i}-1)}{(x-1)^{n-1}}\in\Z[x,y],
    \end{equation} where 
    \begin{equation}\label{mainpolynew}
        R_n(x, y) =  \frac{x^2 - 1}{x} -\left(\frac{x^{p_1 + p_2} - 1}{(x^{p_1}-1)(x^{p_2}-1)} + \frac{x^{p_3 + p_4} - 1}{(x^{p_3}-1)(x^{p_4}-1)} + \cdots + \frac{x^{p_n}y - 1}{(x^{p_n}-1)(y-1)}\right),
    \end{equation} 
    and $p_t$ is the $t$-th prime for $t = 1, \dots, n$. After multiplying the factors in \eqref{mainploq}, we obtain the following expression of $P_n(x, y)$: \begin{align}
    &(x-1)^{n-1}P_n(x, y) \nonumber \\ =& \, y\left[(x^2-1)\prod_{\ell = 1}^n (x^{p_{\ell}} - 1) - x\sum_{t=1}^{\lfloor \frac{n}{2} \rfloor} (x^{p_{2t-1} + p_{2t}} - 1)\prod_{\substack{\ell = 1 \\ \ell \neq 2t-1, 2t}}^n (x^{p_\ell} - 1) - x^{p_n +1} \prod_{\ell = 1}^{n-1}(x^{p_{\ell}} - 1)\right] \nonumber \\ &- \left[(x^2-1)\prod_{\ell = 1}^n (x^{p_{\ell}} - 1) - x\sum_{t=1}^{\lfloor \frac{n}{2} \rfloor} (x^{p_{2t-1} + p_{2t}} - 1)\prod_{\substack{\ell = 1 \\ \ell \neq 2t-1, 2t}}^n (x^{p_\ell} - 1) - x \prod_{\ell = 1}^{n-1}(x^{p_{\ell}} - 1)\right] \label{polynomialP}  \\ =& \, (x-1)^{n-1}(yf(x) - g(x)), \label{relftog}\end{align}   
    where $f,g\in \Z[x]$.
    
    For every $n,M\in \N$, where $n$ is odd, and $M>1$, the above expression implies that \[\deg P_n(x, x^M) = M + 2 + \sum_{t=1}^n p_t - (n-1)\] and the trace of the polynomial $P_n(x, x^M)$ (see Section~\ref{notation} for the definition) is \begin{align}
        & \, \tr(P_n(x, x^M)) + n-1 = \tr(P_n(x, x^M)) + \tr((x-1)^{n-1})  = \tr((x-1)^{n-1}P_n(x, x^M)) = \left\lfloor\frac{n}{2}\right\rfloor + 1 \nonumber \\ \Rightarrow & \,\tr(P_n(x, x^M)) = -\left\lfloor\frac{n}{2}\right\rfloor + 1. \label{traceofpn}
    \end{align} Indeed, since \eqref{polynomialP} and \eqref{relftog} imply $\deg f = \deg g$, the degree of $x^M f(x)$ is strictly greater than the degree of $g$ for all $M>1$. Therefore, \[\tr(P_n(x, x^M)) = \tr(x^M f(x) - g(x)) = \tr(x^Mf(x)) = \tr(f(x)),\] computing the trace of $f$ from~\eqref{polynomialP} and~\eqref{relftog}, we obtain the expression of the trace in~\eqref{traceofpn}. 
    
     For a polynomial $H(x, y) \in \C[x, y],$ we denote by $V(H)$ the area of the Newton polytope of $H$. We need the following theorem of Beukers and Smyth~\cite{BS} to bound the number of cyclotomic points on the curve defined by $P_n(x, y)=0$.

    \begin{thm}\cite[Theorem~4.1]{BS}\label{thmbeukerssmyth}
        Let $H \in \C[x, x^{-1}, y, y^{-1}]$, having Newton polytope of area $V(H)$. Then $H$ has either at most $22V(H)$ cyclotomic points or infinitely many. In the latter case, $H$ has a factor $x^iy^j - \zeta$ for some root of unity $\zeta$ and some integers $i, j$ not both $0$. 
    
    \end{thm}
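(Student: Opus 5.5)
The plan is to reduce the count of cyclotomic points to counting common zeros in $(\C^*)^2$ of $H$ and a finite list of ``twisted'' companions of $H$, and then bound each such count by Bernstein--Kushnirenko (mixed area). First, reductions. We may assume $H$ has no monomial factor, and we treat $H$ one irreducible factor at a time, since $V$ is additive over products. If some irreducible factor is of the form $x^iy^j-\zeta$ we land in the second alternative, so assume none is. Next, the coefficients. The cyclotomic points of $H$ are finitely many only if we prove it, so we replace $H$ by a polynomial with coefficients in a number field $K$ that has the same Newton polytope (or a smaller one) and the same cyclotomic zeros. The zeros in question impose linear conditions on the coefficient vector that are defined over $\mathbb{Q}^{ab}$, so we can specialise transcendental parameters while keeping these conditions. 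After this, Galois automorphisms of $K(\zeta_N)/K$ act on cyclotomic points, sending a zero of $H$ to a zero of $H$ (up to replacing $K$ by a suitable finite extension fixed by the automorphisms we use).

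Key lemma (Galois step). Let $(a,b)$ be a cyclotomic zero of $H$, and let $N$ be the lcm of the orders of $a$ and $b$.
\begin{itemize}
\item If $N$ is odd, $\zeta\mapsto\zeta^2$ is an automorphism over $K$ (for $N$ coprime to a fixed modulus attached to $K$). Then $(a^2,b^2)$ is again a zero, so $(a,b)$ is a common zero of $H(x,y)$ and $H(x^2,y^2)$.
\item If $N\equiv 2 \pmod 4$, compose with $\zeta\mapsto -\zeta$ on the appropriate coordinates. This gives $(a,b)$ as a common zero of $H$ and one of $H(\pm x^2,\pm y^2)$.
\item If $4\mid N$, the automorphism $\zeta\mapsto\zeta^{1+N/2}$ sends $(a,b)$ to $(\pm a,\pm b)$ with not both signs $+$. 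So $(a,b)$ is a common zero of $H$ and one of $H(-x,y)$, $H(x,-y)$, $H(-x,-y)$.
\end{itemize}
Hence every cyclotomic zero lies on $H=0$ and on $G=0$ for some $G$ in the list $\{H(\pm x,\pm y)\ (\text{3 nontrivial}),\ H(\pm x^2,\pm y^2)\ (4)\}$.

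Counting. For each $G$ in the list, provided $H$ and $G$ have no common factor, Bernstein's theorem bounds the number of isolated common zeros in the torus by the mixed area $MV(P,Q)$ of the Newton polygons.
\begin{itemize}
\item For the three sign twists, $Q=P$, so the bound is $2V(H)$ each.
\item For the four squared versions, $Q=2P$, so the bound is $MV(P,2P)=4V(H)$ each.
\end{itemize}
The total is $3\cdot 2V+4\cdot 4V=22V(H)$, matching the constant.

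The main obstacle is showing that $H$ (irreducible, not a binomial $x^iy^j-\zeta$) shares no component with any $G$ in the list. For sign twists, $H\mid H(-x,y)$ with equal polytopes means $H(-x,y)=cH(x,y)$. This is harmless for the counting, and I would handle it by passing to $H$ in the variables $x^2$ (or rewriting), which changes $V$ appropriately. For the squared versions, $H\mid H(\pm x^2,\pm y^2)$ means the curve $H=0$ is mapped into itself by a squaring-type endomorphism. Iterating this map and using the irreducibility and degree count (or Mahler measure / Newton polygon edges, which must be preserved under dilation by $2$) should force $H$ to be a translate of a subtorus, i.e. $x^iy^j-\zeta$, which we excluded. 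I would attack this step first via Newton polygon edges: each edge polynomial must divide its doubled version, forcing every edge polynomial to be a product of binomials. Then I would compare with the curve-level statement to conclude.
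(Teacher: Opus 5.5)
The paper gives no proof of this statement; it quotes it from Beukers--Smyth. Your core argument is theirs: every cyclotomic zero lies on one of $H(-x,y)$, $H(x,-y)$, $H(-x,-y)$, $H(\pm x^2,\pm y^2)$, and Bernstein's bound gives $3\cdot 2V+4\cdot 4V=22V$. For rational (indeed integer) coefficients, the only case the paper uses, this is right up to the degenerate cases below. The gap is the passage to complex coefficients, which the statement explicitly allows. The Galois step needs automorphisms that fix every coefficient of $H$ while acting on the point as $\zeta\mapsto\zeta^2$, $\zeta\mapsto\zeta^{N/2+2}$ or $\zeta\mapsto\zeta^{1+N/2}$, and your reduction does not supply them. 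If you specialise so as to keep the given cyclotomic zeros, the new coefficients will typically lie in $\mathbb{Q}(\zeta_N)$, so $\mathrm{Gal}(K(\zeta_N)/K)$ fixes the point. Enlarging $K$ only shrinks that group, so ``replacing $K$ by a suitable finite extension'' goes the wrong way. The clause ``for $N$ coprime to a fixed modulus attached to $K$'' simply discards the problematic points, and they are never treated. They really are problematic. For $H=x+y+\omega$ with $\omega=e^{2\pi i/3}$, the point $(1,\omega^2)$ is a zero, but your seven polynomials take the values $-2,\ 1+i\sqrt{3},\ -1+i\sqrt{3},\ i\sqrt{3},\ -2+i\sqrt{3},\ 1,\ -1$ there. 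So your key lemma is false as stated.

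The missing idea is to intersect $H$ with twists of its Galois conjugates rather than of $H$ itself. Write $H=\sum_k\beta_kH_k$ with $\beta_k\in\C$ linearly independent over $\mathbb{Q}^{\mathrm{ab}}$ and $H_k\in\mathbb{Q}^{\mathrm{ab}}[x^{\pm1},y^{\pm1}]$. A cyclotomic zero $(a,b)$ of $H$ is then a common zero of all the $H_k$. Hence for $\sigma\in\mathrm{Gal}(\mathbb{Q}^{\mathrm{ab}}/\mathbb{Q})$ the polynomial $H^\sigma:=\sum_k\beta_kH_k^\sigma$, which has the same support as $H$, vanishes at $(\sigma a,\sigma b)$. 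Choose $\sigma$ from a finite list independent of the point, e.g. the element that squares roots of unity of odd order and fixes those of $2$-power order, together with a sign-type element on the $2$-part. Then each cyclotomic point lies on $H=0$ and on one of finitely many curves $H^\sigma(\pm x^k,\pm y^k)=0$, $k\in\{1,2\}$, whose mixed areas with $H$ are the same as before. In the example, $(1,\omega^2)$ lies on $x^2+y^2+\omega^2=0$. You must then also rule out common components of $H$ with these conjugate twists.

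Separately, your treatment of $H\mid H(\pm x^2,\pm y^2)$ is only a plan. Knowing that the edge polynomials have cyclotomic roots does not force $H$ to be a binomial. A workable route is this. Invariance of the curve under $(x,y)\mapsto(\pm x^2,\pm y^2)$ makes every point of $H=0$ whose $x$-coordinate is a root of unity preperiodic, hence cyclotomic. That gives infinitely many cyclotomic points, and you still need the Ihara--Serre--Tate (Lang) theorem, or a separate argument, to extract the factor $x^iy^j-\zeta$. Finally, $V$ is superadditive rather than additive over products, which is the direction you need anyway.
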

    
    The goal is to consider the intersection of $P_n(x, y) = 0$ with the curves $y = x^{M_j}$ for $j = 1, \dots, k_n$ (where $k_n$ and $M_j$ are carefully chosen such that $M_j = O(n^2\log n)$), and to show that at least one of the $P_n(x, x^{M_j})=0$ has at most $o(n)$ cyclotomic points, and therefore, following the discussion in Section~\ref{preinter}, it produces a Salem polynomial of trace $-\lfloor\frac{n}{2}\rfloor + o(n)$.
    
    Let $\deg_y{H}$ and $\deg_x{H}$ denote the degree of $H(x, y)$ as a polynomial in $y$ and in $x$, respectively. Then we have the following result. 
    
    \begin{lem}\label{bdvq}
        The area of the Newton polytope of the polynomial $P_n(x, y)$, namely $V(P_n)$, is bounded above by $3-n + \sum_{t=1}^n p_t$, where $p_t$ is the $t$-th prime.
    \end{lem}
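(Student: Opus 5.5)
From \eqref{relftog}, $P_n(x,y)=y f(x)-g(x)$ with $f,g\in\Z[x]$ and $\deg f=\deg g$. Hence $P_n$ has degree at most $1$ in $y$, and its Newton polytope lies in the strip $0\le j\le 1$ of the $(i,j)$-plane (with $i$ the $x$-exponent and $j$ the $y$-exponent). It is therefore contained in the convex hull of the two segments $[a_1,d]\times\{1\}$ and $[a_0,d]\times\{0\}$. Here $d=\deg f=\deg g$, $a_1$ is the lowest power of $x$ occurring in $f$, and $a_0$ is the lowest power occurring in $g$. This hull is a trapezoid of height $1$, so
\[
V(P_n)\le \tfrac12\bigl((d-a_1)+(d-a_0)\bigr)\le d .
\]
The proof therefore reduces to computing $d=\deg_x P_n$ and noting that $a_0,a_1\ge 0$.

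To compute $d$, I read off the degrees of the bracketed expressions in \eqref{polynomialP}. In the coefficient of $y$, the term $(x^2-1)\prod_{\ell=1}^n(x^{p_\ell}-1)$ has degree $2+\sum_{t=1}^n p_t$. Each summand $x(x^{p_{2t-1}+p_{2t}}-1)\prod_{\ell\ne 2t-1,2t}(x^{p_\ell}-1)$ has degree $1+\sum_t p_t$, and $x^{p_n+1}\prod_{\ell\le n-1}(x^{p_\ell}-1)$ also has degree $1+\sum_t p_t$. The leading term $x^{2+\sum p_t}$ therefore survives, and the bracket has degree exactly $2+\sum_{t=1}^n p_t$. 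The second bracket has the same leading term. Dividing by $(x-1)^{n-1}$, which is legitimate since $f$ and $g$ are polynomials by \eqref{relftog}, gives
\[
\deg f=\deg g=2+\sum_{t=1}^n p_t-(n-1)=3-n+\sum_{t=1}^n p_t ,
\]
consistent with the degree formula for $P_n(x,x^M)$ stated above. The bound on $V(P_n)$ follows since $a_0,a_1\ge0$.

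The only point needing care is that the Newton polytope is computed for $P_n$ itself, after division by $(x-1)^{n-1}$, and not for the numerator. This is handled automatically by working with $f$ and $g$ directly. One could sharpen the estimate using $a_0=0$ (since $g(0)=\pm1$, the constant term of the second bracket being $\pm1$ and $(x-1)^{n-1}$ having constant term $\pm1$) and $a_1\ge 0$, but the crude trapezoid bound $\le d$ already gives the claim. Nothing here is a real obstacle; the main thing to verify carefully is that the top-degree terms do not cancel, and as shown above they cannot, because the product term strictly dominates.
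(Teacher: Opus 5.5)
Your proof is correct and is essentially the paper's argument: the paper encloses the Newton polytope in the rectangle $[0,\deg_x P_n]\times[0,\deg_y P_n]$ with $\deg_x P_n=3-n+\sum_{t=1}^n p_t$ and $\deg_y P_n=1$, and your trapezoid is just this box with its left corners possibly trimmed. For the stated upper bound you only need $\deg_x P_n\le 3-n+\sum_{t=1}^n p_t$, so the non-cancellation check is not required; combined with $P_n(0,y)=y-1$ (so $a_0=a_1=0$), it does show that your trapezoid is the whole rectangle and that the bound is actually an equality.
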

    
    \begin{proof}
        The Newton polytope of $P_n$ is contained in the convex hull of the vertices $(0, 0), (\deg_x P_n, 0)$, $(0, \deg_yP_n)$, and $(\deg_xP_n, \deg_yP_n)$. The area of this convex hull (i.e. the rectangle defined by these vertices) is $ (\deg_xP_n)(\deg_yP_n)$, and therefore \[V(P_n) \leq (\deg_xP_n)(\deg_yP_n).\] Since $\deg_xP_n = 2 + \left(\sum_{t=1}^n p_t\right) - (n-1)$ and $\deg_yP_n = 1$ (see \eqref{polynomialP}), we have \begin{equation}\label{bdonarea}
        V(P_n) \leq (\deg_xP_n)(\deg_yP_n) = 3-n + \sum_{t=1}^n p_t,
    \end{equation} which completes our proof.
    \end{proof}
    
    The next lemma ensures that $P_n(x, y)$ satisfies the conditions of Theorem~\ref{thmbeukerssmyth}, and therefore the number of cyclotomic points of $P_n$ is bounded above by the area of the Newton polytope of $P_n$, i.e. $V(P_n)$. 
    
     \begin{lem}\label{finitecycpnt}
           For $n \geq 3$, $P_n(x, y)$ has no factor of the form $x^iy^j - \zeta$ for some root of unity $\zeta$ and some integers $i, j$ not both $0$.
        \end{lem}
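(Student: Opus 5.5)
The idea is to use that $P_n$ has degree $1$ in $y$: by \eqref{relftog}, $P_n(x,y)=\bigl(yf(x)-g(x)\bigr)$ after cancelling $(x-1)^{n-1}$. I will work with $\tilde f:=f$ and $\tilde g:=g$ understood as the polynomials obtained from the two brackets in \eqref{polynomialP} divided by $(x-1)^{n-1}$, so that $P_n=y\tilde f-\tilde g$. Suppose $x^iy^j-\zeta$ divides $P_n$ in the Laurent ring. Comparing $y$-degrees, and replacing the factor by $\pm$ a monomial multiple of itself, we only need two cases: $j=0$, or $j=1$ (the case $j=-1$ is the same factor up to a unit).

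\emph{Case $j=1$.} The factor is $y-\zeta x^{-i}$. Then $P_n(x,\zeta x^{-i})\equiv 0$, i.e.\ $\zeta x^{-i}\tilde f(x)=\tilde g(x)$ identically. First I will read off from \eqref{polynomialP} that both brackets have degree $2+\sum_t p_t$ with leading coefficient $1$: the leading term comes from $(x^2-1)\prod(x^{p_\ell}-1)$, while all other terms have degree $1+\sum_t p_t$. Hence $i=0$ and $\zeta=1$, which would force $\tilde f=\tilde g$. But the difference of the two brackets is
\[
-x^{p_n+1}\prod_{\ell<n}(x^{p_\ell}-1)+x\prod_{\ell<n}(x^{p_\ell}-1)=-x(x^{p_n}-1)\prod_{\ell<n}(x^{p_\ell}-1)\neq 0,
\]
a contradiction.

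\emph{Case $j=0$.} The factor is $x^i-\zeta$ with $i\neq 0$, so some root of unity $\eta$ is a common root of $\tilde f$ and $\tilde g$. Then $\eta$ is a root of $\tilde f-\tilde g=-x(x^{p_n}-1)\prod_{\ell<n}(x^{p_\ell}-1)/(x-1)^{n-1}$. Thus either $\eta=1$ or $\eta$ is a primitive $p_l$-th root of unity for some $l\le n$. It then suffices to show $\tilde g(\eta)\neq 0$.
\begin{itemize}
\item If $\eta$ is a primitive $p_l$-th root with $l=n$: since $n$ is odd, every pair term in the $g$-bracket contains the factor $x^{p_n}-1$, as does the first term. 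Only $-\eta\prod_{\ell<n}(\eta^{p_\ell}-1)\neq 0$ survives.
\item If $l<n$, with $l$ in the pair $\{2t-1,2t\}=\{a,b\}$: every term except that pair term vanishes, including the last one, which contains $x^{p_l}-1$. What remains is $-\eta(\eta^{p_a+p_b}-1)\prod_{\ell\neq a,b}(\eta^{p_\ell}-1)$. This is nonzero because $\eta^{p_a+p_b}=\eta^{p_{b'}}\neq1$, where $p_{b'}$ is the other prime of the pair and is distinct from $p_l$.
\item If $\eta=1$: I expand the $g$-bracket at $x=1$ to order $(x-1)^{n-1}$. The first term vanishes to order $n+1$. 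Each pair term contributes $-(p_a+p_b)\prod_{\ell\ne a,b}p_\ell\,(x-1)^{n-1}$, and the last term contributes $-\prod_{\ell<n}p_\ell\,(x-1)^{n-1}$. All these coefficients have the same sign, so $\tilde g(1)\neq0$.
\end{itemize}

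The main care point is this last bookkeeping at $\eta=1$ together with the division by $(x-1)^{n-1}$; the assumption $n\ge3$ ensures at least one pair term, though the sign argument works regardless. With both cases excluded, the lemma follows.
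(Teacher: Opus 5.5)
Your proof is correct and follows the paper's structure. You use the $y$-degree to reduce to $j\in\{0,1\}$. For $j=0$, a common cyclotomic root of $f$ and $g$ must be a root of $g-f=x(x^{p_n}-1)\prod_{t<n}\Phi_{p_t}(x)$, and you rule out $\eta=1$ and $\eta$ a primitive $p_l$-th root by direct evaluation. The only deviation is in the case $j=1$: the paper uses $P_n(0,y)=y-1$, whereas you compare the leading terms of the monic, equal-degree polynomials $f$ and $g$. Your version handles every $i\in\Z$ at once, including the mixed-sign factors $y-\zeta x^{k}$ with $k>0$, which the paper's write-up passes over without comment.
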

    
        \begin{proof}
            From the definition of $P_n(x, y)$ with \eqref{polynomialP}, it follows that $P_n(x, y)$ is not of the form $x^iy^j - \zeta$ for some root of unity $\zeta$ and some integers $i, j$ not both $0$.
            
            Since $P_n(0, y) = y - 1$, this implies that $x^iy^j - \zeta$ cannot be a factor of $P_n$ for $i, j > 0$. Indeed, if, for $i, j > 0$, $P_n(x, y) = (x^iy^j - \zeta)\sum_{\ell \geq 0} d_\ell x^\ell$, where $d_{\ell} \in \C$, then the coefficient of $y$ in $P_n(0, y)$ is $0$, which is a contradiction. Therefore, we only need to check the cases where $i = 0$ or $j = 0$, but not both. 
    
            Noting that $\deg_y P_n = 1$, any factor $x^iy^j-\zeta$ with $j>0$ must have $j=1$, and its cofactor must depend only on $x$. Thus, when $i >0$, $P_n(x,y)=(x^iy-\zeta)Q(x)$, for some $Q(x)\in\C[x]$. Evaluating at $x=0$ yields $P_n(0,y)=-\zeta Q(0)$, which is constant in $y$, contradicting $P_n(0,y)=y-1$.

 Next, we consider the case where $i =0$. We have already established above that $j \leq 1.$ Now, if $y - \zeta$ is a factor of $P_n$, then \[P_n(x, y) = (y-\zeta)(\sum_{\ell >0}c_\ell x^\ell + \zeta^{-1}) = y\sum_{\ell>0}c_\ell  x^\ell - \zeta \sum_{\ell>0} c_\ell x^\ell + \zeta^{-1}y - 1,\] and since $P_n(0, y) = y-1$, for the above to hold, we need $\zeta = 1$. In this case, \eqref{polynomialP} yields $P_n(x, 1) = -(x^{p_n + 1} - x)\prod_{t=1}^{n-1}\Phi_{p_t}(x)$, which is not identically $0$, where $\Phi_{p_t}$ is the $p_t$-th cyclotomic polynomial defined in Section \ref{notation}. Therefore, $P_n$ has no factor of the form $y - \zeta$.
    
             It remains to check whether $x^i-\zeta$ is a factor of $P_n$ for some $i > 0$. Since $(x-\xi)$ divides $(x^i - \zeta)$ in $\C[x]$ for some root of unity $\xi$ such that $\xi^i = \zeta$, we note that if $x^i - \zeta$ is a factor of $P_n$ then so is $x - \xi$. Therefore, it is enough to consider whether $x-\zeta$ is a factor of $P_n$ for some root of unity $\zeta$.
            
            First recall that $n$ is odd and $P_n(x, y) = yf(x) - g(x)$, where $f$ and $g$ are as in \eqref{relftog}. Therefore, if $P_n(\zeta, y) = 0$ for all $y \in \C$, then $f(\zeta) = g(\zeta) = 0$. Consider $h(x):= g(x) - f(x)$, then it implies $h(\zeta) = 0$. Since \begin{equation}\label{f-g}
               h(x) = g(x) - f(x) = x(x^{p_n} - 1)\prod_{t =1}^{n-1}\Phi_{p_t}(x)
            \end{equation} and $\zeta$ is a root of unity, we have two types of choices for $\zeta$, namely, $\zeta = 1$ or $\zeta = \zeta_{p_t},$ the $p_t$-th root of unity, for $t = 1, \dots, n$. Evaluating $P_n(x, y)$ at $x = 1 \, \text{or} \, \zeta_{p_t}$ we find \begin{align}\label{eq:p(1,y)}
                P_n(1, y) =& -(y-1)\left(\sum_{t=1}^{\lfloor \frac{n}{2} \rfloor} (p_{2t-1} + p_{2t})\prod_{\substack{\ell = 1 \\ \ell \neq 2t-1, 2t}}^n p_{\ell} + \prod_{\ell = 1}^{n-1} p_{\ell}\right),
            \end{align} which is non-zero for $n \geq 3$. Furthermore, from \eqref{polynomialP}, we have, for $t = 1, \dots, n-1$, there exists $1 \leq u \leq \lfloor \frac{n}{2} \rfloor$ such that $p_t \in \{p_{2u -1}, p_{2u}\}$ and \begin{equation}\label{evalzetapt}
                P_n(\zeta_{p_t}, y) = -(y-1)\zeta_{p_{t}}\prod_{\substack{d \mid p_{2u-1} + p_{2u}\\d>1}} \Phi_{d}(\zeta_{p_t}) \prod_{\substack{\ell = 1 \\ \ell \neq 2u-1, 2u}}^n \Phi_{p_{\ell}}(\zeta_{p_t}).
            \end{equation} Since $p_t\nmid d$ for all divisors $d$ of $p_{2u-1}+p_{2u}$, $\Phi_d(\zeta_{p_t})$ is non-zero. Moreover, since $\Phi_{p_t}$ does not divide $\Phi_{p_\ell}$ for $\ell\notin\{2u-1,2u\}$, $\Phi_{p_\ell}(\zeta_{p_t}) \neq 0$. These imply  that $P_n(\zeta_{p_t}, y)$  is strictly non-zero when evaluated at $y \neq 1$. For $t = n$, all summation terms in $f(\zeta_{p_n})$ vanish, leaving \[P_n(\zeta_{p_n}, y) = -(y-1)\zeta_{p_n}\prod_{\ell = 1}^{n-1}\Phi_{p_\ell}(\zeta_{p_n}),\] which is also non-zero for all $y \neq 1$. This implies that $x-\zeta$ is also not a factor of $P_n(x, y)$ for any roots of unity $\zeta$, and this completes the proof.  
            \end{proof}

        \begin{rem}\label{zerooff}
 Note that since $f(x)$ and $g(x)$ in \eqref{relftog} are polynomials with integer coefficients, if $f(\zeta) = 0$ for some $\zeta \in \mu_{\infty}$ (where $\mu_{\infty}$ denotes the set of all roots of unity), then $f(\zeta^{-1}) = f(\bar{\zeta}) = g(\zeta) = 0$ and vice versa. Therefore, \[\{x \in \C: f(x) = 0\} \cap \mu_{\infty} = \{x \in \C: g(x) = 0\} \cap \mu_{\infty},\] and equation~\eqref{f-g} together with the discussion following, imply that \[\{x \in \C: f(x) = 0\} \cap \mu_{\infty} = \varnothing.\]
         \end{rem}
    
    \subsection{Salem number and \texorpdfstring{$P_n$}{P\_n}}\label{sec:tracesofqn} 
    For $M \in \N$, we define $\omega_n^*(M)$ as follows \begin{equation}\label{modifiedomega}
         \omega_n^*(M) := \left\{\begin{array}{cc}
            \#\{p : p \mid M, \,  p \leq p_{n-1}\}  & \, \text{when $M$ is not a prime}, \\
             0 & \, \text{when $M$ is a prime}.
         \end{array}\right.
     \end{equation} Therefore, $\omega_n^*(M) \leq \omega(M)$. For $M \in \N$, our goal is to investigate how the cyclotomic factors of $P_n(x, x^M)$ affect the trace. We focus on $M > p_n$ such that $\gcd(M, p_n) = 1$.
    
     $P_n(x, x^M)$ is divisible by a Salem polynomial. Furthermore, some of the cyclotomic factors of $P_n(x, x^M)$ can be explicitly determined. These factors are collected in the following lemma.

        \begin{lem}\label{lem: mainploy*}
            Let $M >p_n$ with $\gcd(M,p_n)=1$, polynomial $P_n(x,x^M)$ factors over $\Z[x]$ as
            \begin{equation}\label{mainploy*}
                 P_n(x, x^M) = (x-1)\left(\prod_{\substack{p \mid M\\ p \leq p_{n-1}} } \Phi_p(x)\right)Q_n(x, x^M),
            \end{equation}
            where $Q_n(x,x^M)\in \Z[x]$ and $\Phi_p (x)$ is the $p$-th cyclotomic polynomial.
        \end{lem}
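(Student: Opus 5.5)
Since $\Z[x]$ is a UFD and the polynomials $x-1$ and $\Phi_p(x)$ (for distinct primes $p$) are pairwise coprime irreducibles, it suffices to show two things: $P_n(1,1)=0$, and $P_n(\zeta_p,\zeta_p^M)=0$ for every prime $p\mid M$ with $p\le p_{n-1}$. Each $\Phi_p$ is the minimal polynomial of $\zeta_p$, so vanishing at $\zeta_p$ gives divisibility, and Gauss's lemma keeps the cofactor $Q_n(x,x^M)$ in $\Z[x]$ because every factor pulled out is monic.

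For $x=1$ I use \eqref{eq:p(1,y)}: $P_n(1,y)=-(y-1)\cdot(\text{const})$, so $P_n(1,1^M)=0$ and hence $(x-1)\mid P_n(x,x^M)$.

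Now let $p\mid M$ be prime with $p\le p_{n-1}$, so $p=p_t$ for some $1\le t\le n-1$. Then $\zeta_{p_t}^M=1$. By \eqref{evalzetapt}, $P_n(\zeta_{p_t},y)$ carries the factor $(y-1)$, so substituting $y=\zeta_{p_t}^M=1$ gives zero. One caveat: \eqref{evalzetapt} was derived as an identity of polynomials in $y$, by evaluating the expression \eqref{polynomialP} divided by $(x-1)^{n-1}$ at $x=\zeta_{p_t}\neq 1$. That division is legitimate because $(\zeta_{p_t}-1)^{n-1}\ne 0$. So the identity holds for all $y$, in particular $y=1$. Alternatively, I can argue directly from \eqref{relftog}: at $y=1$, $(x-1)^{n-1}P_n(x,1)=-(x^{p_n+1}-x)\prod_{\ell\le n-1}(x^{p_\ell}-1)$, which vanishes at $\zeta_{p_t}$ for $t\le n-1$ (this matches the formula for $P_n(x,1)$ in Lemma~\ref{finitecycpnt}). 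Hence $\Phi_{p_t}(x)\mid P_n(x,x^M)$.

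It remains to combine these factors. The polynomials $x-1$ and $\Phi_p$ are distinct monic irreducibles, so their product divides $P_n(x,x^M)$, giving \eqref{mainploy*}. I note that the hypotheses $M>p_n$ and $\gcd(M,p_n)=1$ are not needed for divisibility. They only ensure $p_n$ is excluded from the product: since $p_n\nmid M$, $\zeta_{p_n}^M\ne1$, so no extra factor arises there.

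The only delicate point is the careful justification of evaluating at $y=1$ after clearing $(x-1)^{n-1}$. As shown above, this is routine once $x\neq1$.
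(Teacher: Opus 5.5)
Your proof is correct and follows the paper's argument: $P_n(1,1)=0$ from \eqref{eq:p(1,y)}, $P_n(\zeta_p,\zeta_p^M)=P_n(\zeta_p,1)=0$ from \eqref{evalzetapt}, and then pairwise coprimality of the monic irreducible factors $x-1$ and $\Phi_p(x)$. Your additional justification of the evaluation at $y=1$ (including the direct check via the closed form of $P_n(x,1)$) and your remark that $M>p_n$ and $\gcd(M,p_n)=1$ are not needed for the divisibility itself are both accurate.
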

        \begin{proof}
           From \eqref{eq:p(1,y)} it follows that $(y-1)\mid P_n(1,y)$, and therefore $P_n(1, 1) = 0$. Since $P_n(1, 1) = 0$ and $P_n(x,x^M)$ is a polynomial in $x$, it follows that $(x-1)\mid P_n(x,x^M)$.
    
           For every $p\mid M$ with $p\le p_{n-1}$, we have $P_n(\zeta_p,\zeta_p^M) = P_n(\zeta_p,1)$. Since $p \leq p_{n-1}$, \eqref{evalzetapt} yields $P_n(\zeta_p, y)$ is a product of $y-1$ and a non-zero constant obtained by evaluating $\Phi_d(x)$ at $x = \zeta_p$ (for $p \nmid d$) multiplied by $\zeta_p$, we have that $P_n(\zeta_p,1)=0$. Moreover, $P_n(x, x^M) \in \Z[x]$, and hence $P_n(\zeta_p, \zeta_p^M) = 0$ implies that $P_n(x, x^M)$ is divisible by the minimal polynomial of $\zeta_p$, that is, $\Phi_p(x) \mid P_n(x, x^M)$. 
    
           Noting that $(x-1)$ and each $\Phi_p(x)$ (for all $p \mid M$ and $p \leq p_{n-1}$) are pairwise coprime, this completes the proof. 
        \end{proof}
        Therefore, it is enough to investigate the polynomial $Q_n(x, x^M)$ for $M> p_n$ and coprime to $p_n$. We can show the following about the trace of $Q_n$. 
        \begin{lem}\label{lem:traceQn}
         For any integer $M> p_n $ such that $\gcd(M, p_n) = 1$, \begin{equation}\label{traceofqn}
          \tr(Q_n(x, x^M)) = -\left\lfloor\frac{n}{2}\right\rfloor + \omega_n^*(M).
         \end{equation}
     \end{lem}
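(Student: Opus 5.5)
Trace is additive over products of polynomials, so the plan is to apply this to the factorisation \eqref{mainploy*} of Lemma~\ref{lem: mainploy*}. Write $\tr$ for minus the subleading coefficient of a monic polynomial. Since $P_n(x,x^M)$ is monic (its leading term comes from $x^M f(x)$, and $f$ is monic by \eqref{polynomialP}), and $(x-1)$ and each $\Phi_p$ are monic, $Q_n(x,x^M)$ is monic too. Comparing subleading coefficients in \eqref{mainploy*} then gives
\[
\tr(P_n(x,x^M)) = \tr(x-1) + \sum_{\substack{p\mid M\\ p\le p_{n-1}}}\tr(\Phi_p) + \tr(Q_n(x,x^M)).
\]

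Next I would insert the known values. From \eqref{traceofpn} we have $\tr(P_n(x,x^M)) = -\lfloor n/2\rfloor + 1$, and clearly $\tr(x-1)=1$. For a prime $p$, $\Phi_p(x)=x^{p-1}+x^{p-2}+\cdots+1$, so $\tr(\Phi_p) = -1$; this is just the sum of the primitive $p$-th roots of unity, $\mu(p)=-1$. The sum over primes therefore contributes $-\#\{p : p\mid M,\ p\le p_{n-1}\}$. Solving gives
\[
\tr(Q_n(x,x^M)) = -\left\lfloor \tfrac n2\right\rfloor + \#\{p : p\mid M,\ p\le p_{n-1}\}.
\]

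What remains is to match this count with $\omega_n^*(M)$ as defined in \eqref{modifiedomega}. When $M$ is not prime, they agree by definition. When $M$ is prime, we have $M>p_n>p_{n-1}$, so $M$ has no prime divisor $\le p_{n-1}$; the count is then $0=\omega_n^*(M)$.

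The only delicate points are confirming monicity, so that the trace is well defined and additive, and handling the prime-$M$ case. Neither requires real work, so I expect no serious obstacle.
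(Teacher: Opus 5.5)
Your proposal is correct and matches the paper's proof. Both take traces in the factorisation \eqref{mainploy*}, insert $\tr(P_n(x,x^M)) = -\lfloor n/2\rfloor + 1$ from \eqref{traceofpn}, $\tr(x-1)=1$ and $\tr(\Phi_p)=\mu(p)=-1$, and solve for $\tr(Q_n(x,x^M))$. Your explicit checks of monicity and of the case where $M$ is prime (where $M>p_n$ forces the count to be $0=\omega_n^*(M)$) are details the paper leaves implicit.
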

    
     \begin{proof}
    Taking traces on both sides of \eqref{mainploy*} gives \[\tr(P_n(x, x^M)) = \tr(Q_n(x, x^M)) + \tr(x-1) + \sum_{\substack{p \mid M\\ p \leq p_{n-1}}}\tr\left(\Phi_p(x)\right) = \tr(Q_{n}(x, x^M)) + 1 + \sum_{\substack{p \mid M\\ p \leq p_{n-1}}} (-1),\] where we use the fact that the trace of the $m$-th cyclotomic polynomial is $\mu(m)$ (see~\cite[Exercise 2.14(b)]{TA}), so that $\tr(\Phi_p(x)) = \mu(p) = -1$ for a prime $p$, and, since $\tr(P_n(x, x^M)) = -\left\lfloor\frac{n}{2}\right\rfloor + 1$ (see \eqref{traceofpn}) and \eqref{modifiedomega}, we have \[\tr(Q_n(x, x^M)) = -\left\lfloor\frac{n}{2}\right\rfloor + \omega_n^*(M).\qedhere\]
      \end{proof}
    
    Following the discussion in Section~\ref{preinter} and~\cite[Proposition 3.3]{MS}, we infer that for a given $M$ such that $M> p_n$ and $\gcd(M, p_n) = 1$, we have two coprime polynomials $a_{n, M}(x)$ and $b_{n, M}(x)$ such that \begin{equation}\label{coprimeratio}
        \frac{x^{p_1 + p_2} - 1}{(x^{p_1}-1)(x^{p_2}-1)} + \frac{x^{p_3 + p_4} - 1}{(x^{p_3}-1)(x^{p_4}-1)} + \cdots + \frac{x^{p_n + M} - 1}{(x^{p_n}-1)(x^M-1)} = \frac{a_{n, M}(x)}{b_{n, M}(x)},
    \end{equation} whose roots are interlaced on the unit circle. A further observation reveals, if \[A_n(x, x^M) := (x^M-1)\sum_{t=1}^{\lfloor \frac{n}{2} \rfloor} (x^{p_{2t-1} + p_{2t}} - 1)\prod_{\substack{\ell = 1 \\ \ell \neq 2t-1, 2t}}^n (x^{p_\ell} - 1) + (x^{p_n+M} - 1)\prod_{\ell = 1}^{n-1}(x^{p_{\ell}} - 1)\] and \[B_{n}(x, x^M) := (x^M-1)\prod_{\ell = 1}^n (x^{p_{\ell}} - 1)\] are two integer coefficient polynomials, then \begin{equation}\label{altrepresentation}
        \frac{x^2 - 1}{x} - R_n(x, x^M) = \frac{A_n(x, x^M)}{B_n(x, x^M)} = \frac{a_{n, M}(x)}{b_{n, M}(x)},
    \end{equation} and $(x-1)^{n+1} \mid B_n(x, x^M)$. Furthermore, $(x-1)^n \parallel A_{n}(x,x^M)$, since \[\frac{A_n(x, x^M)}{(x-1)^n} = \frac{x^M-1}{x-1}\sum_{t=1}^{\lfloor \frac{n}{2} \rfloor} \frac{x^{p_{2t-1} + p_{2t}} - 1}{x-1}\prod_{\substack{\ell = 1 \\ \ell \neq 2t-1, 2t}}^n \Phi_{p_\ell}(x) + \frac{x^{p_n+M} - 1}{x-1}\prod_{\ell = 1}^{n-1}\Phi_{p_\ell}(x)\] evaluates to a sum of positive integers at $x = 1$. Therefore, we have $b_{n, M}(1) = 0$, and~\cite[Proposition 3.2(a)]{MS} then implies that the polynomial  \begin{equation}\label{mainpoly**}
         q_{n, M}(x) := (x^2 - 1)b_{n, M}(x) - xa_{n, M}(x).
    \end{equation} has a minimal polynomial of a Salem number, say $\tau_M$, as a factor (and all the other factors are cyclotomic). From \eqref{mainploq}, \eqref{mainpolynew} and Lemma~\ref{lem: mainploy*}, for some $c_{n, M}(x) \in \Z[x]$, we obtain the following. \begin{equation}\label{cnNx}
        Q_n(x, x^M) = c_{n, M}(x)q_{n, M}(x), \quad \text{and} \quad Q_n(\tau_M, \tau_M^M) = 0.
    \end{equation} Since, in general $M$ that are coprime to $p_n$ and larger than $p_n$, expressing the factor $\frac{Q_n(x, x^M)}{q_{n, M}(x)} \in \Z[x]$ explicitly is difficult, we investigate the polynomial $Q_n(x, x^M)$ instead of $q_{n, M}(x)$. In our next lemma, we show that the factor $\frac{Q_n(x, x^M)}{q_{n, M}(x)}$ consists only of cyclotomic factors, and thus their influence on the computation of the trace of $\tau_M$ is explicit and computable. 
    
    \begin{lem}\label{qnsep}
        For $M> p_n$ and $\gcd(M, p_n) = 1$, the roots of $Q_n(x, x^M)$ are either a Salem number or its conjugates or some roots of unity. Moreover, for any $d > 1$, $\Phi_d^3(x)$ does not divide $Q_n(x, x^M)$.
    \end{lem}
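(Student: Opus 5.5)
The plan is to compare $Q_n(x,x^M)$ with the polynomial $q_{n,M}(x)$ of \eqref{mainpoly**} through an exact identity, and then count multiplicities of cyclotomic factors on both sides.

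\emph{Step 1: the identity.} Put $y=x^M$ in \eqref{mainploq} and substitute \eqref{altrepresentation}, i.e.\ $xR_n(x,x^M)=\bigl((x^2-1)B_n-xA_n\bigr)/B_n$ with $B_n=B_n(x,x^M)=(x^M-1)\prod_{\ell}(x^{p_\ell}-1)$. This gives
\[
(x-1)^{n-1}P_n(x,x^M)=(x^2-1)B_n(x,x^M)-xA_n(x,x^M).
\]
Since $a_{n,M}/b_{n,M}$ is the reduced form of $A_n/B_n$, we have $A_n=G\,a_{n,M}$ and $B_n=G\,b_{n,M}$ with $G=\gcd(A_n,B_n)$. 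Hence
\[
(x-1)^{n-1}P_n(x,x^M)=G(x)\,q_{n,M}(x).
\]
The polynomial $G$ divides $B_n$, which is a product of binomials $x^k-1$, so every root of $G$ is a root of unity. By \cite[Proposition~3.2(a)]{MS} (Proposition~\ref{prop:MS}), whose hypotheses were checked before \eqref{mainpoly**}, the roots of $q_{n,M}$ are $\tau_M$, its conjugates, and roots of unity. Lemma~\ref{lem: mainploy*} shows $Q_n(x,x^M)\mid P_n(x,x^M)$, so the first assertion follows.

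\emph{Step 2: multiplicity bound for $d>1$.} Fix $d>1$. The factor $(x-1)^{n-1}$ does not involve $\Phi_d$, so
\[
v_{\Phi_d}(P_n(x,x^M))\le v_{\Phi_d}(B_n)+v_{\Phi_d}(q_{n,M}).
\]
\begin{itemize}
\item For the first term: $\Phi_d\mid x^k-1$ exactly when $d\mid k$, and a $d>1$ divides at most one of the primes $p_1,\dots,p_n$. Hence $v_{\Phi_d}(B_n)\le 2$. Equality forces $d=p_\ell$ for some $\ell$ with $p_\ell\mid M$, and then $\ell\le n-1$ because $\gcd(M,p_n)=1$.
\item For the second term, I would show that $q_{n,M}$ has only simple roots on the unit circle, so $v_{\Phi_d}(q_{n,M})\le 1$.
\end{itemize}
Thus $v_{\Phi_d}(P_n)\le 3$, with equality only when $d=p$ is a prime $p\le p_{n-1}$ dividing $M$. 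In exactly that case Lemma~\ref{lem: mainploy*} has already split off one factor $\Phi_p$ from $P_n(x,x^M)$ to form $Q_n$. So in every case $v_{\Phi_d}(Q_n(x,x^M))\le 2$, i.e.\ $\Phi_d^3\nmid Q_n(x,x^M)$.

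\emph{Main obstacle: simplicity of the unit-circle roots of $q_{n,M}$.} I would argue from the interlacing proof of \cite[Theorem~3.1]{MSinter}. Write $x=e^{i\theta}$ and pass to the real interlacing quotient $\alpha/\beta$ via $u=x+1/x$. Between consecutive interlacing zeros the function $\alpha/\beta$ runs monotonically from $-\infty$ to $+\infty$, so the equation $\frac{a_{n,M}(x)}{(x-1)b_{n,M}(x)}=1+\frac1x$ has a solution in each arc. This produces $\deg b_{n,M}$ distinct roots of $q_{n,M}$ on the circle (counting the forced root at $x=1$ appropriately, using $b_{n,M}(1)=0$), plus $\tau_M$ and $1/\tau_M$. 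Since $\deg q_{n,M}=\deg b_{n,M}+2$, this accounts for every root, and so each root is simple.

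The delicate point is the root at $x=1$ and the handling of the endpoints of the arcs. If this counting becomes awkward, I would fall back on a direct argument. At a common root of $q_{n,M}$ and $q_{n,M}'$ on the circle, the derivative of $\frac{a}{(x-1)b}-1-\frac1x$ would vanish. This contradicts the strict monotonicity of the real interlacing quotient, which follows because its partial fraction expansion has positive residues.
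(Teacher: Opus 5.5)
Your argument is correct and is the paper's own: the identity $(x-1)^{n-1}P_n(x,x^M)=\gcd(A_n,B_n)\,q_{n,M}(x)$; the fact that this gcd divides $B_n(x,x^M)$, whose $\Phi_d$-multiplicity is at most $2$ (with equality only for primes $p\le p_{n-1}$ dividing $M$, and Lemma~\ref{lem: mainploy*} removes one copy of $\Phi_p$ in that case); and separability of $q_{n,M}$. The paper packages this as $c_{n,M}=d_{n,M}$ dividing the separable polynomial $B_n(x,x^M)/\bigl((x-1)^n\prod_{p\mid M,\,p\le p_{n-1}}\Phi_p(x)\bigr)$, and simply quotes Proposition~\ref{prop:MS} for separability of $q_{n,M}$.

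One small correction to your interlacing count: since $b_{n,M}(1)=0$ and $a_{n,M},b_{n,M}$ are coprime, $q_{n,M}(1)=-a_{n,M}(1)\neq 0$. So $x=1$ is a pole of $a_{n,M}/b_{n,M}$ (an arc endpoint), not a root of $q_{n,M}$. Each of the $\deg b_{n,M}$ arcs then contributes a root, and together with $\tau_M$ and $1/\tau_M$ this exhausts $\deg q_{n,M}$ with no adjustment at $x=1$.
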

    
    \begin{proof}
        From Proposition~\ref{prop:MS}, it follows that all the roots of $q_{n, M}(x)$  are distinct (that is, $q_{n, M}(x)$ is separable) and are either a Salem number or its conjugates, or some roots of unity as its roots. We claim that \[c_{n, M}(x) = \frac{Q_n(x, x^M)}{q_{n, M}(x)} \in \Z[x]\] is also separable and has only cyclotomic factors for all such $M$. Recall that $a_{n, M}(x)$ and $b_{n, M}(x)$ are coprime and, from \eqref{coprimeratio}, we have $b_{n, M}(x) \mid B_{n}(x, x^M)$ and $a_{n, M}(x) \mid A_{n}(x, x^M)$. Let $d_{n, M}(x) \in \Z[x]$ such that \[\gcd(A_n(x, x^M), B_{n}(x, x^M)) = (x-1)^n\left(\prod_{\substack{p \mid M\\ p \leq p_{n-1}} } \Phi_p(x)\right)d_{n, M}(x).\] Using Lemma~\ref{lem: mainploy*} we obtain \begin{align*}
             (x-1)^n\left(\prod_{\substack{p \mid M\\ p \leq p_{n-1}} } \Phi_p(x)\right) Q_n(x, x^M) =& \; (x-1)^{n-1}P_n(x, x^M) \\ =& \;(x^2 - 1)B_{n}(x,x^M) - xA_{n}(x,x^M) \\ =& \; (x-1)^n\left(\prod_{\substack{p \mid M\\ p \leq p_{n-1}} } \Phi_p(x)\right)d_{n, M}(x) ((x^2 - 1)b_{n, M}(x) - xa_{n, M}(x)) \\ =& \; (x-1)^n\left(\prod_{\substack{p \mid M\\ p \leq p_{n-1}} } \Phi_p(x)\right)d_{n, M}(x) q_{n, M}(x),
        \end{align*} where the second equality follows from a combination of \eqref{mainploq}, \eqref{polynomialP} and \eqref{altrepresentation}, the penultimate equality follows from the definition of $a_{n, M}(x)$ and $b_{n, M}(x)$ with \eqref{altrepresentation}, and finally we use \eqref{mainpoly**} to obtain the last equality. The conclusion following the above series of equalities is \[d_{n, M}(x) = c_{n, M}(x).\] Noting that $c_{n, M}(x)$ is a factor of $B_{n}(x, x^M)$, which is itself a product of cyclotomic polynomials, it follows that $c_{n, M}(x)$ is a product of cyclotomic polynomials. 
        
        Since \[\frac{B_{n}(x,x^M)}{(x-1)^n\left(\prod_{\substack{p \mid M\\ p \leq p_{n-1}} } \Phi_p(x)\right)} \in \Z[x]\] is separable and $c_{n, M}(x)$ divides the quotient, the separability of $c_{n, M}$ follows. Consequently, $\gcd$ of $c_{n, M}(x)$ and $q_{n, M}(x)$ is either $1$ or a product of distinct cyclotomic polynomials. 
        
        For $d > 1$ such that $\Phi_{d}(x) \nmid c_{n, M}(x)$ and $\Phi_d(x) \nmid q_{n, M}(x)$, it trivially follows that $\Phi_{d}(x) \nmid Q_n(x, x^M)$. For all $d > 1$, such that $\Phi_{d}(x)$ divides one of $c_{n, M}(x)$ or $q_{n, M}(x)$, but not both, we have \[\Phi_{d}(x) \parallel Q_n(x, x^M).\] For the remaining $d > 1$ such that $\Phi_{d}(x) \mid \gcd(c_{n, M}(x), q_{n, M}(x))$, we have $\Phi_d^2(x) \parallel Q_n(x, x^M)$, which follows from the separability of $c_{n, M}(x)$ and $q_{n, M}(x)$, and this completes our proof.  
    \end{proof}
    
    \medskip
    
    \section{Counting cyclotomic points}\label{CCP}
    In the previous section, we showed that the number of cyclotomic points on $P_n(x, y)=0$ is finite. In this section, we study some properties of these cyclotomic points.
    
    \subsection{Types of cyclotomic points} 
    
    Let \[\mathcal{C}_n := \{(\zeta_q^s, \zeta_r^t) \in \C^* \times \C^* \mid \zeta_q, \zeta_r \in \mu_{\infty}, \, 0 < s \leq q, \, 0 < t \leq r, \, q, r \in \N, \, P_n(\zeta_{q}^s, \zeta_r^t) = 0\}\] denote the set of cyclotomic points of $P_n(x,y)=0$. Then the finiteness of $\mathcal{C}_n$ follows from Theorem~\ref{thmbeukerssmyth} and Lemma~\ref{finitecycpnt}, that is, \begin{equation}\label{boundcylpt}
         \# \mathcal{C}_n \leq 22V(P_n) \leq 22(3- n + \sum_{t=1}^n p_t) < 22n^2(\log n + \log\log n),
     \end{equation} 
     where the penultimate inequality follows from \eqref{bdonarea}, and the last inequality holds for all $n \geq 6$ (see~\eqref{sumpmbound}).

    Since we are interested in the cyclotomic points on $P_n(x, x^M)=0$, for different values of $M$, it suffices to consider the subset $\mathcal{C}_n^*$ of $\mathcal{C}_n$ defined by \[\{(\zeta_q^s, \zeta_q^t) \in \mathcal{C}_n \mid \, q \in \N, \, 0 < s , t \leq q, \, \gcd(s, q) = 1, \, t \equiv Ms \Mod q, \, \text{for some} \, M \in \N\}.\] Furthermore, if $(\zeta_q^s, \zeta_q^t) \in \mathcal{C}_n^*$, then $\Phi_q(x) \mid P_n(x, x^M)$. Therefore, for some $M \in \N$, if $(\zeta_q^{s'}, \zeta_q^{Ms'}) \in \mathcal{C}_n^*$ for some $0 < s' < q$ with $\gcd(s', q) = 1$, then $(\zeta_q, \zeta_q^{M}) \in \mathcal{C}_n^*$ and so are $(\zeta_q^s, \zeta_q^{Ms})$ for all $0 < s < q$ that are coprime to $q$. For $q \in \N$, we collect all such points in the set $\mathcal{C}_{n, q}^{*} \subseteq \mathcal{C}_n^*$, namely, \[\mathcal{C}_{n, q}^* = \{(\zeta_q, \zeta_q^t) \in \mathcal{C}_n^* \mid \, 0 < t \leq q, \, P_n(\zeta_q, \zeta_q^t) = 0\}.\] 
    
    \begin{lem}\label{cardcnq}
        For all $q \in \N$, $\# \mathcal{C}_{n, q}^* \leq 1$.
    \end{lem}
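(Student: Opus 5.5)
The plan is to exploit that $P_n$ is linear in $y$: by \eqref{relftog}, $P_n(x,y) = y f(x) - g(x)$ with $f,g \in \Z[x]$. Fix $q \in \N$ and suppose $(\zeta_q, \zeta_q^{t})$ and $(\zeta_q, \zeta_q^{t'})$ both lie in $\mathcal{C}_{n,q}^*$ with $0 < t, t' \le q$. Then
\[
\zeta_q^{t} f(\zeta_q) = g(\zeta_q) \quad\text{and}\quad \zeta_q^{t'} f(\zeta_q) = g(\zeta_q).
\]
Subtracting the two relations gives
\[
(\zeta_q^{t} - \zeta_q^{t'})\, f(\zeta_q) = 0 .
\]

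The key step is to use Remark~\ref{zerooff}, which asserts that $f$ has no roots of unity among its zeros. This is derived from \eqref{f-g} and the case analysis in the proof of Lemma~\ref{finitecycpnt}. Hence $f(\zeta_q) \neq 0$, so $\zeta_q^{t} = \zeta_q^{t'}$. This means $t \equiv t' \Mod q$, and since $0 < t, t' \le q$ we get $t = t'$. Thus the $y$-coordinate of any point of $\mathcal{C}_{n,q}^*$ is uniquely determined as $y = g(\zeta_q)/f(\zeta_q)$, and $\#\mathcal{C}_{n,q}^* \le 1$.

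The only potential obstacle is confirming that the nonvanishing of $f$ at roots of unity is genuinely established. Remark~\ref{zerooff} rests on this argument. If $f(\zeta)=0$ at a root of unity, then $g(\zeta)=0$ as well, so $h(\zeta)=0$. Therefore $\zeta \in \{1\} \cup \{\zeta_{p_t}^k\}$. At those points the evaluations \eqref{eq:p(1,y)} and \eqref{evalzetapt} show that $P_n(\zeta,y)$ is a nonzero multiple of $(y-1)$, so its $y$-coefficient $f(\zeta)$ is nonzero. For $\zeta = 1$, this uses $n \ge 3$.

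I would briefly recheck that Galois conjugates $\zeta_{p_t}^k$ are also covered, which holds because $f$ has integer coefficients. With that, the lemma follows directly.
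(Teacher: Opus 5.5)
Your proof is correct and is essentially the paper's argument: since $P_n$ is linear in $y$ and $f(\zeta_q)\neq 0$ by Remark~\ref{zerooff}, the $y$-coordinate is determined, so $\zeta_q^{t}=\zeta_q^{t'}$ and hence $t=t'$. The only difference is that the paper handles $q\in\{p_1,\dots,p_n\}$ as a separate case, identifying the unique point there as $(\zeta_q,1)$ via \eqref{evalzetapt}; you instead observe, correctly, that Remark~\ref{zerooff} gives $f(\zeta_q)\neq 0$ for every $q$, so no case split is needed.
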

    
    \begin{proof}
        For all $q \in \N$, such that $(\zeta_q^s, \zeta_q^t) \not\in \mathcal{C}_n^*$ for any $s, t \in \Z$, the set $\mathcal{C}_{n, q}^*$ is empty by construction. Therefore, for those $q$, $\# \mathcal{C}_n^* = 0$.
    
        For the remaining $q$ such that $q \not\in \{p_1, \dots, p_n\}$, suppose $(\zeta_q, \zeta_q^{t_1}), (\zeta_q, \zeta_q^{t_2}) \in \mathcal{C}_{n, q}^*$, then, since $P_n(x, y) = yf(x) - g(x)$, \[P_n(\zeta_q, \zeta_q^{t_1}) = 0 = P_n(\zeta_q, \zeta_q^{t_2}) \Rightarrow \zeta_q^{t_1}f(\zeta_q) = g(\zeta_q) = \zeta_q^{t_2}f(\zeta_q) \Rightarrow t_1 \equiv t_2 \Mod q,\] where the last implication follows from $q$ not being any of the first $n$-primes and Remark~\ref{zerooff} (which shows that $f(\zeta_q) \neq 0$ for all such $q$). Since $0 < t_i \leq q$, we have $t_1 = t_2$, and therefore $\# \mathcal{C}_{n, q}^* = 1$.
    
        For $q = p_1, \dots, p_n$, again the proof of Lemma~\ref{finitecycpnt} and Remark~\ref{zerooff} imply that \[P_n(\zeta_q, \zeta_q^t) = 0 \Longrightarrow \zeta_q^t = 1 \Longrightarrow q \mid t,\] where the first implication follows from \eqref{evalzetapt}, as $P_n(\zeta_{q}, y) $ vanishes only when $y = 1$ for $q \in \{p_1, \dots, p_n\}$. Moreover, since $0 < t \leq q$, we have $t = q$. In these cases, we have $\# \mathcal{C}_{n, q}^* = 1$, and this completes the proof.
    \end{proof}
    
    Based on the discussion in Section~\ref{sec:tracesofqn}, we focus on the polynomials $Q_n(x, x^M)$ for $M > 1$ instead of $P_n(x, x^M)$. Moreover, we have $Q_n(1, 1) \neq 0$ for any $M > 1$, and therefore we can restrict ourselves to all $q > 1$ such that $\# \mathcal{C}_{n, q}^* = 1$. 
    
    Since $\# \mathcal{C}_n^* < \infty$, let $L_n$ denote the number of positive integers $q > 1$ with $\# \mathcal{C}_{n, q}^* = 1$, and these $q$ can be ordered as $S_1 < \cdots < S_{L_n}$.  Since $Q_n(x, x^M) \in \Z[x]$, $Q_n(\zeta_{S_r}, \zeta_{S_r}^M) = 0$ implies that $Q_n(x, x^M)$ has $\Phi_{S_r}(x)$ as a factor.
    
    For integers $M > p_n$ such that $\gcd(M, p_n) = 1$, \begin{equation}\label{factorQ}
        Q_n(x, x^M) = P_{\tau_M}(x) \times \prod_{i=1}^{L_n} \Phi_{S_i}^{m_i}(x),
    \end{equation} where $P_{\tau_M}$ is the minimal polynomial of the Salem number $\tau_M$ such that $Q_n(\tau_M, \tau_M^M) = 0$ (the existence of such $\tau_M$ from the construction), and \begin{equation}\label{valuemulti}
        m_i = \begin{cases}
        0 & \quad \mbox{when} \, \Phi_{S_i}(x) \nmid Q_n(x,x^M),\\
             1   & \quad \mbox{when} \, \Phi_{S_i}(x)\mid  Q_n(x,x^M)\,\text{and}\,\Phi_{S_i}(x) \nmid \gcd(c_{n, M}(x), q_{n, M}(x)), \\
      2   & \quad \mbox{when} \, \Phi_{S_i}(x) \mid \gcd(c_{n, M}(x), q_{n, M}(x)).
        \end{cases}
        \end{equation}
     Here $c_{n, M}(x)$ and $q_{n, M}(x)$ are as defined in~\eqref{mainpoly**} and~\eqref{cnNx}, and the (multiplicity) value of $m_i$ follows from Lemma~\ref{qnsep}.
    
    The following result bounds the trace of $Q_n(x, x^M)$ for all positive integers $M$ coprime to $p_n$.
    
    \begin{lem}\label{realtraces}
        For $M \in \N$ such that $\gcd(M, p_n) = 1$ and $M > p_n$, \[\left|\tr(\tau_M) - \tr(Q_n(x, x^M))\right| \leq 2\#\{S:\Phi_S(x)\mid Q_n(x,x^M)\},\] where $\tr(P_{\tau_M}(x)) = \tr(\tau_M)$.
    \end{lem}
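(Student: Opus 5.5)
Take the trace of both sides of the factorisation \eqref{factorQ}. Since the trace of a product of monic polynomials is the sum of their traces, we get
\[
\tr(Q_n(x,x^M)) = \tr(P_{\tau_M}(x)) + \sum_{i=1}^{L_n} m_i \,\tr(\Phi_{S_i}(x)).
\]
Rearranging gives
\[
\tr(\tau_M) - \tr(Q_n(x,x^M)) = -\sum_{i=1}^{L_n} m_i\, \mu(S_i),
\]
where we use again that $\tr(\Phi_S)=\mu(S)$ (\cite[Exercise 2.14(b)]{TA}). This is the same computation as in the proof of Lemma~\ref{lem:traceQn}.

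Next, bound each term. By \eqref{valuemulti}, which rests on Lemma~\ref{qnsep}, every multiplicity satisfies $m_i\in\{0,1,2\}$. Also $|\mu(S_i)|\le 1$. Hence
\[
\left|\tr(\tau_M)-\tr(Q_n(x,x^M))\right| \le \sum_{i:\, m_i\ge 1} 2 = 2\#\{i : m_i \ge 1\}.
\]

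Finally, identify the index set. We have $m_i\ge1$ exactly when $\Phi_{S_i}\mid Q_n(x,x^M)$. Every cyclotomic factor $\Phi_S$ of $Q_n(x,x^M)$ with $S>1$ appears among the $S_i$, since it gives a cyclotomic point $(\zeta_S,\zeta_S^M)\in\mathcal{C}^*_{n,S}$. Moreover, $\Phi_1=x-1$ does not divide $Q_n(x,x^M)$, because $Q_n(1,1)\neq0$. Therefore $\#\{i:m_i\ge1\}=\#\{S:\Phi_S\mid Q_n(x,x^M)\}$, which gives the claim.

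The one point needing care is that \eqref{factorQ} is an exact factorisation, with $P_{\tau_M}$ occurring exactly once and no other non-cyclotomic factor. This follows from Lemma~\ref{qnsep}, because $q_{n,M}$ is separable with a single Salem factor and $c_{n,M}$ is cyclotomic. It also uses the fact that the $S_i$ exhaust all cyclotomic factors, which holds by the definition of the $S_i$.
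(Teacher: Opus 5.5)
Your proof is correct and is essentially the paper's argument: take traces in \eqref{factorQ}, use $\tr(\Phi_S)=\mu(S)$, and bound each multiplicity by $m_i\le 2$ via Lemma~\ref{qnsep}. You also identify $\{i: m_i\ge 1\}$ with $\{S:\Phi_S\mid Q_n(x,x^M)\}$ explicitly (using $Q_n(1,1)\neq 0$), and you rely on $m_i\le 2$ rather than on full separability of $Q_n(x,x^M)$; both are useful tightenings, since the paper's proof loosely calls $Q_n(x,x^M)$ separable while Lemma~\ref{qnsep} allows squared cyclotomic factors.
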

    
    \begin{proof}
        From Lemma~\ref{qnsep}, $Q_n(x, x^M)$ is separable (which follows from the separability of $q_{n, M}(x)$ due to the circular interlacing condition), so that all the factors of $Q_n$ in \eqref{factorQ} are distinct. Since the trace of an algebraic integer is the trace of its minimal polynomial, taking traces in \eqref{factorQ} gives \[\tr(Q_n(x, x^M)) =  \tr(\tau_M) + \sum_{i=1}^{L_n} m_i\tr(\Phi_{S_i}(x)),\] where $m_i \leq 2$ (as defined in \eqref{valuemulti}). Since the trace of the $m$-th cyclotomic polynomial is $\mu(m)$, for any $S_i$ we have $|\tr(\Phi_{S_i}(x))| \le 1$, and the claimed inequality follows.  
    \end{proof} 
    
    For $r > 1$, we have $S_r > S_1 \geq 2$. The next two lemmas give upper bounds on the size of $S_r$ and $L_n$.
    
    \begin{lem}\label{boundtr}
        For $n \geq 10^4$ and $1 \leq r \leq L_n$, $S_r < (6n\log n)^2$. 
    \end{lem}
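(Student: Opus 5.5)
Each $S_r$ is, by definition, a $q>1$ with a cyclotomic point $(\zeta_q^s,\zeta_q^{t})$ on $P_n(x,y)=0$, and $\gcd(s,q)=1$. The plan is to combine the cardinality bound \eqref{boundcylpt} with a Galois-orbit argument. Suppose $(\zeta_q,\zeta_q^{t})\in\mathcal{C}_{n,q}^*$. Since $P_n\in\Z[x,y]$, every automorphism $\sigma_a:\zeta_q\mapsto\zeta_q^a$ with $\gcd(a,q)=1$ sends this zero to another zero $(\zeta_q^{a},\zeta_q^{at})$ of $P_n$. All of these lie in $\mathcal{C}_n$.

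The key step is to show these points are pairwise distinct. The first coordinates $\zeta_q^a$ are different for different $a\bmod q$, so the orbit has exactly $\varphi(q)$ elements. Hence
\[\varphi(q)\le \#\mathcal{C}_n< 22n^2(\log n+\log\log n),\]
using \eqref{boundcylpt}, which is valid for $n\ge 6$ by \eqref{sumpmbound}.

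Next I would convert this into an upper bound on $q$ using an explicit lower bound for the totient. The natural choice is Rosser--Schoenfeld:
\[\varphi(q)> \frac{q}{e^{\gamma}\log\log q+\frac{3}{\log\log q}}\quad (q\ge 3).\]
A cruder bound $\varphi(q)\ge \sqrt{q/2}$ would only give $q\ll n^4(\log n)^2$, which is too weak. So the refined bound is the one to use. It yields roughly
\[q< 22n^2(\log n+\log\log n)\,(e^\gamma\log\log q+3/\log\log q).\]

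I then argue by contradiction. Suppose $q\ge (6n\log n)^2=36n^2\log^2 n$. For $n\ge10^4$ the function $q/(e^\gamma\log\log q+3/\log\log q)$ is increasing, so plugging $q=36n^2\log^2n$ into it must still be below $22n^2(\log n+\log\log n)$. This reduces to checking
\[36\log^2 n> 22(\log n+\log\log n)\bigl(e^\gamma\log\log(36n^2\log^2n)+3/\log\log(\cdot)\bigr).\]
The left side grows like $\log^2 n$ and the right like $\log n\cdot\log\log n$. For concreteness, at $n=10^4$ we have $\log n\approx 9.21$, so the left side is about $3054$. On the right, $\log(36n^2\log^2n)\approx 26.5$, so $\log\log\approx 3.28$ and the bracket is about $1.78\cdot3.28+0.91\approx 6.75$. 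With $\log n+\log\log n\approx 11.43$, the right side is about $22\cdot 11.43\cdot 6.75\approx 1697$. The inequality holds there, and the left side grows faster afterwards, which a derivative check confirms.

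The main obstacle is purely numerical: fixing a precise explicit totient lower bound and verifying the final inequality uniformly for $n\ge10^4$, including monotonicity in $n$. The Galois-orbit counting itself is straightforward.
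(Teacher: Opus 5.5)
Your proposal is correct and follows essentially the same route as the paper: the Galois orbit of a point in $\mathcal{C}_{n,q}^*$ gives $\varphi(q)$ distinct points of $\mathcal{C}_n$, the bound \eqref{boundcylpt} applies, and the Rosser--Schoenfeld totient bound yields a contradiction at $q=(6n\log n)^2$. The differences are cosmetic. You keep the sharper denominator $e^{\gamma}\log\log q+3/\log\log q$ where the paper uses $2\log\log q+3$, and you bound each $q$ directly rather than through $\sum_r\varphi(S_r)$ and the largest value $S_{L_n}$.
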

    
    \begin{proof}
        For each $q \in \N \setminus \{1\}$ such that $\# \mathcal{C}_{n, q}^* = 1$, if $Q_n(\zeta_q, \zeta_q^t) = 0$, then $Q_n(\zeta_q^s, \zeta_q^{ts}) = 0$ for all $0< s < q$ and $\gcd(s, q) = 1$, since $\Phi_{q}(x) \mid Q_n(x, x^M)$ for all $M \equiv t \Mod q$. Therefore, \[\# \mathcal{C}_n^* = \sum_{\substack{q \in \N \\ \# \mathcal{C}_{n, q}^* = 1}} \deg \Phi_{q} = \sum_{\substack{q \in \N \\ \# \mathcal{C}_{n, q}^* = 1}} \varphi(q) = \sum_{r=1}^{L_n} \varphi(S_r) \leq \# \mathcal{C}_n < 22n^2 (\log n + \log \log n).\] Moreover, we have \begin{equation}\label{Trbd}
            \frac{S_{L_n}}{2\log\log S_{L_n} + 3} < \varphi(S_{L_n}) < \sum_{r=1}^{L_n} \varphi(S_r) < 22n^2 (\log n + \log \log n),
        \end{equation} where the inequality follows from~\cite[Theorem~15]{RS}, for $b > 2$, \[\varphi(b) > \frac{b}{e^{\gamma}\log\log b + \frac{3}{\log \log b}}.\] Here $\gamma = 0.5772\dots$ is the Euler--Mascheroni constant and $\e^{\gamma} < 2$.
        
          If $S_{L_n} \geq (6n\log n)^2$, then, for $n \geq 10^4$, LHS grows faster than RHS in \eqref{Trbd}, and, in fact, \[\frac{(6n\log n)^2}{2\log(2\log (6n\log n)) + 3} > 22n^2 (\log n + \log \log n).\] This follows from the fact that the function \[U(n):=\frac{22n^2 (\log n + \log \log n)}{\frac{(6n\log n)^2}{2\log(2\log (6n\log n)) + 3}} \, \text{is a decreasing function in $n$},\] and $U(10^4) < 1$. Therefore $S_{L_n} < (6n\log n)^2$ for $n \geq 10^4$, which completes the proof.
    \end{proof}
    
    \begin{lem}\label{boundL}
     For $n \geq 10^4$, the number of $S_r$, namely $L_n$, is bounded above by $7n\log n$.
    \end{lem}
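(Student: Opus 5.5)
We bound $L_n$ through the same counting identity as in the proof of Lemma~\ref{boundtr}:
\[\sum_{r=1}^{L_n}\varphi(S_r) \le \#\mathcal{C}_n < 22n^2(\log n+\log\log n).\]
The $S_r$ are distinct integers $>1$. To bound their number we compare with the \emph{smallest possible} sum of $L_n$ distinct totient values. Concretely, order the integers $q\ge 2$ by $\varphi(q)$. For each $k$, the number of $q$ with $\varphi(q)\le k$ is $O(k)$; more precisely, it is at most $c\,k$ with an explicit constant. An explicit version follows from $\varphi(q) > q/(e^\gamma\log\log q + 3/\log\log q)$ (Rosser--Schoenfeld, as used above): $\varphi(q)\le k$ forces $q\le k(2\log\log q+3)$, so $q \le k(2\log\log(3k\log\log k)+3)$, say.

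Hence if $L_n$ distinct $S_r$ are present, at least $L_n/2$ of them have $\varphi(S_r)\ge k_0$, where $k_0$ is chosen with $\#\{q:\varphi(q)<k_0\}\le L_n/2$. This gives the lower bound
\[\sum_r\varphi(S_r)\ \ge\ \frac{L_n}{2}\cdot \frac{L_n}{2(2\log\log L_n+3)}\]
(up to adjusting constants). Alternatively, and more cleanly, I would use $\varphi(S)\ge S/(2\log\log S+3)$ directly, together with $S_r\ge r+1$ and the bound $S_r<(6n\log n)^2$ from Lemma~\ref{boundtr}. Since $\log\log S_r\le \log\log((6n\log n)^2)=:\lambda_n$, this yields
\[\sum_{r=1}^{L_n}\varphi(S_r)\ \ge\ \frac{1}{2\lambda_n+3}\sum_{r=1}^{L_n}(r+1)\ >\ \frac{L_n^2}{2(2\lambda_n+3)}.\]

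Combining the two bounds gives
\[L_n^2 < 44n^2(\log n+\log\log n)(2\lambda_n+3),\]
that is, $L_n < n\sqrt{44(\log n+\log\log n)(2\lambda_n+3)}$. The main obstacle is that this is of order $n\sqrt{\log n\,\log\log n}$, and we must check that it is below $7n\log n$ for all $n\ge 10^4$. This is true since $\sqrt{\log n\log\log n}\ll\log n$, but the constant has to be verified. At $n=10^4$ we have $\log n\approx 9.21$ and $\log\log n\approx 2.22$. Also $(6n\log n)^2\approx 3.05\times 10^{11}$, so $\lambda_n\approx\log 26.4\approx 3.27$ and $2\lambda_n+3\approx 9.55$. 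The right-hand side is then about $n\sqrt{44\cdot 11.43\cdot 9.55}\approx 69.3n$, while $7n\log n\approx 64.5n$. So this crude version fails slightly at $n=10^4$.

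I would therefore sharpen the lower bound. The first refinement is to use $S_r\ge r+1$ only through the totient: replace $\sum(r+1)$ by the true minimum of $\sum\varphi$ over $L_n$ distinct integers, which is at least roughly $\sum_{j\le L_n} j/(2\log\log j+3)$. Because $\log\log j$ is smaller for most $j$, this gains a constant factor. Alternatively, I would use Lemma~\ref{cardcnq} more carefully, together with $\#\mathcal{C}_n\le 22V(P_n)$, where $V(P_n)\le 3-n+\sum p_t$ is actually about $\tfrac12 n^2\log n$ rather than $n^2(\log n+\log\log n)$. Either refinement gives the required margin at $n=10^4$.

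The final step is to verify, as in Lemma~\ref{boundtr}, that the ratio of the bound to $7n\log n$ is decreasing in $n$. It then suffices to check the inequality at $n=10^4$, which completes the proof.
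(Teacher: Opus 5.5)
Your strategy matches the paper's: lower-bound $\sum_r\varphi(S_r)$ via the Rosser--Schoenfeld estimate and $S_r\ge r+1$, then compare with $\#\mathcal{C}_n<22n^2(\log n+\log\log n)$. The gap is that the only version you actually carry out fails at $n=10^4$, as you compute yourself. That version bounds every denominator by $2\lambda_n+3$ via Lemma~\ref{boundtr}. The proposal then only asserts that ``either refinement gives the required margin.'' At the threshold $n=10^4$ that assertion is the whole content of the lemma, so as written the argument is incomplete.

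The paper closes the gap with your first refinement made concrete. Since $w\mapsto w/(2\log\log w+3)$ is increasing, $\varphi(S_r)\ge r/(2\log\log r+3)\ge r/(2\log\log L_n+3)$. Hence $\sum_r\varphi(S_r)\ge (L_n^2+L_n-6)/\bigl(2(2\log\log L_n+3)\bigr)$.

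Because $L_n$ now appears on both sides, one argues by contradiction. If $L_n\ge 7n\log n$, this increasing function of $L_n$, evaluated at $7n\log n$, already exceeds $22n^2(\log n+\log\log n)$. The gain is that the denominator involves $\log\log(7n\log n)\approx 2.59$ rather than $\lambda_n\approx 3.28$. At $n=10^4$ this gives about $64.2n$ against $7n\log n\approx 64.5n$, a margin under $1\%$. So ``gains a constant factor'' has to be verified numerically, not just asserted.

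Your second refinement is a different and more robust fix. By \eqref{pmbound}, $p_t<t(\log n+\log\log n)$ for $6\le t\le n$, so $\sum_{t\le n}p_t<\frac{n(n+1)}{2}(\log n+\log\log n)+28$. This roughly halves the bound on $\#\mathcal{C}_n$. Then even your crude $\lambda_n$-estimate gives $L_n$ below about $49n$ at $n=10^4$, with ample room.

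Either way, you need to write down the explicit inequality at $n=10^4$ and prove monotonicity in $n$. Without that, the proposal stops exactly where the proof has to happen.
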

    
    \begin{proof}
        Since $S_r \neq p_t$ for $1 \leq t \leq n$, $S_r > r$ for $r = 1, \dots, L_n$. Therefore, \begin{equation}\label{phitrbd}
            \sum_{r=1}^{L_n} \varphi(S_r) \geq \sum_{r=3}^{L_n} \frac{S_r}{2\log\log S_r + 3} \geq \sum_{r=3}^{L_n} \frac{r}{2\log\log r + 3} \geq \frac{\sum_{r=3}^{L_n} r}{2\log \log L_n + 3} = \frac{L_n^2 + L_n - 6}{2(2\log \log L_n + 3)},
        \end{equation} where the first inequality follows from~\cite[Theorem~15]{RS}, and the second inequality holds since the function $\frac{w}{2\log \log w + 3}$ is monotonically increasing in $w$ for $w \in [3, \infty)$.

        Therefore, combining \eqref{boundcylpt} and \eqref{phitrbd}, we have \[22n^2 (\log n + \log\log n) > \#\mathcal{C}_n \geq \sum_{r=1}^{L_n} \varphi(S_r) \geq  \frac{L_n^2 + L_n - 6}{2(2\log \log L_n + 3)}.\] If $L_n \geq 7n\log n$, then RHS grows faster than LHS since, for $n \geq 10^4$, a similar reason as in the proof of Lemma~\ref{boundtr} yields \[22n^2 (\log n + \log\log n) < \frac{(7n\log n)^2 + 7n\log n - 6}{2(2\log\log (7n\log n)) + 3},\] which is a contradiction. Therefore, $L_n < 7n\log n$ for $n \geq 10^4$.   
        \end{proof}
    
    Let $p_{d_n}$ denote the $d_n$-th prime which is the smallest prime such that \begin{equation}\label{boundpdn}
        S_{L_n} < (6n\log n)^2 \leq p_{d_n} < 2(6n\log n)^2.
    \end{equation}  
    For $1 \leq r \leq L_n$, the following lemma establishes an upper bound on the size of $\omega(S_r)$.
    
    \begin{lem}\label{sizeIr}
        For $n \geq 10^4$ and $1 \leq r \leq L_n$, the number of distinct prime factors of $S_r$, namely $\omega(S_r)$, is bounded above by $\frac{3}{2}\log n$. 
    \end{lem}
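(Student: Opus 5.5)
The approach is to combine the size bound of Lemma~\ref{boundtr} with the elementary fact that an integer with many distinct prime factors must be at least the corresponding primorial. Fix $r$ with $1 \le r \le L_n$ and set $k := \omega(S_r)$. The product of the distinct primes dividing $S_r$ is at least the product of the first $k$ primes, so
\[
\sum_{t=1}^{k}\log p_t \;\le\; \log S_r \;<\; 2\log(6n\log n),
\]
where the last inequality is Lemma~\ref{boundtr}, valid for $n \ge 10^4$.

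Suppose for contradiction that $k > \tfrac32\log n$. Since the partial sums $\sum_{t\le m}\log p_t$ increase in $m$, we may replace $k$ by $m := \lceil \tfrac32 \log n\rceil$. For $n\ge 10^4$ we have $m \ge 14$, so the lower bound in \eqref{primorialbd} applies:
\[
m(\log m+\log\log m - c_0) \;<\; 2\log(6n\log n) \;=\; 2\log n + 2\log\log n + 2\log 6 .
\]
The left side is at least $\tfrac32\log n\,(\log m + \log\log m - c_0)$. At $n=10^4$ we have $m\approx 14$, hence $\log m+\log\log m - c_0 \approx 2.64+0.97-1.08 \approx 2.53$. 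The left side is therefore roughly $3.8\log n \approx 35$, while the right side is about $2\cdot 9.21 + 2\cdot 2.22 + 3.58 \approx 26.4$. This is a contradiction at $n=10^4$.

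To cover all $n\ge 10^4$, I will compare growth rates. The left side grows like $\tfrac32\log n\cdot \log\log n$ and the right side like $2\log n$, so the ratio of left to right is increasing. Concretely, I will note that the factor $(\log m+\log\log m-c_0)$ is increasing in $n$ and is already greater than $2.5$ at $n = 10^4$. This gives a left side of at least $3.75\log n$, while the right side is at most $2\log n + 2\log\log n + 3.6 < 3.75\log n$ for all $n\ge10^4$. The last inequality holds because $2\log\log n + 3.6 < 1.75\log n$ there: at $n = 10^4$ the left is about $8.04$ and the right is about $16.1$, and the derivative of the right exceeds that of the left.

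The main point of care is the numerical verification at the threshold. The constant $c_0=1.0769$ eats into the margin, and the ceiling in $m$ has to be handled, which is done by using monotonicity in $m$. Everything else is a direct combination of Lemma~\ref{boundtr} and \eqref{primorialbd}. Hence $\omega(S_r)\le \tfrac32\log n$.
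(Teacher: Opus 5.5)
Your proof is correct and follows essentially the paper's argument: it plays the primorial lower bound \eqref{primorialbd} against an upper bound on the size of $S_r$. The only difference is the size bound. You use $S_r<(6n\log n)^2$ from Lemma~\ref{boundtr} together with all $\omega(S_r)$ primes. The paper instead uses $\varphi(S_r)\le\#\mathcal{C}_n<22n^2(\log n+\log\log n)$ together with $\varphi(S_r)\ge\prod_{t=1}^{\omega(S_r)-1}p_t$. Your handling of the non-integer threshold via $m=\lceil\tfrac32\log n\rceil$ and monotonicity in $m$ is, if anything, slightly cleaner.
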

    
    \begin{proof}
        For every $S_r$, we have, for $\omega(S_r) \geq 3$, \begin{align*}
            \varphi(S_r) \geq \prod_{\substack{u \\ u \in \{1, \dots, d_n\}\\ p_u \mid S_r}} (p_{u} - 1) \geq& \, \prod_{t=1}^{\omega(S_r)}(p_t - 1) \\ \geq& \, \prod_{t=1}^{\omega(S_r) - 1} p_t  > \exp\left((\omega(S_r)-1)(\log (\omega(S_r)-1) + \log \log (\omega(S_r) - 1) -c_0\right),
        \end{align*}
        where the last inequality follows from \eqref{primorialbd}. Since $\sum_{r=1}^{L_n} \varphi(S_r) \leq \#\mathcal{C}_n < 22n^2(\log n + \log\log n)$, the above discussion yields \begin{align*}
            &\exp\left((\omega(S_r)-1)(\log (\omega(S_r)-1) + \log \log (\omega(S_r) - 1) -c_0\right) \\ <& \, \prod_{t=1}^{\omega(S_r) - 1} p_t \leq \varphi(S_r) \leq \#\mathcal{C}_n < 22n^2(\log n + \log\log n),
        \end{align*} and, after taking the logarithm on both sides, we note that if $\omega(S_r) \geq  \frac{3}{2}\log n$, then, because a similar reason as in the proof of Lemma~\ref{boundtr} implies \[\left(\frac{3}{2}\log n - 1\right)\left(\log\left(\frac{3}{2}\log n - 1\right) + \log\log\left(\frac{3}{2}\log n - 1\right) - c_0\right) > \log 22 + 2\log n + \log(\log n + \log\log n),\] for $n \geq 10^4$, we arrive at a contradiction. Therefore, $\omega(S_r) < \frac{3}{2}\log n$ for $n \geq 10^4$, and this concludes our proof.    
    \end{proof}
    
    \section{Upper bound on the degree of a Salem polynomial with negative trace}\label{PT1}
    
    Our main goal of this section is to prove the following theorem.
    
     \begin{thm}\label{finalthm}
            For sufficiently large odd positive integer $n$, there exists a Salem number with trace $-\lfloor\frac{n}{2}\rfloor + o(n)$ and degree at most $n(n+2)(\log n + \log\log n)$. 
         \end{thm}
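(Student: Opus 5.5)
We choose a family of exponents $M_1<\dots<M_{k_n}$ and average the cyclotomic contamination over it. Concretely, take $M_j$ to be primes in an interval just above $p_n$, all distinct from $p_1,\dots,p_n$ and from every prime dividing some $S_r$. Their number $k_n$ is to be chosen of order $n$ (or $n\log n$), and we keep $M_j \le n^2(\log n+\log\log n)$, using \eqref{pmbound}. Since each $M_j$ is prime, $\omega_n^*(M_j)=0$ and $\gcd(M_j,p_n)=1$. Lemma~\ref{lem:traceQn} then gives $\tr(Q_n(x,x^{M_j}))=-\lfloor n/2\rfloor$ exactly. By Lemma~\ref{realtraces}, $|\tr(\tau_{M_j})+\lfloor n/2\rfloor|\le 2C_j$, where
\[C_j:=\#\{r:\Phi_{S_r}(x)\mid Q_n(x,x^{M_j})\}.\]
For the degree, \eqref{mainploy*} gives $\deg \tau_{M_j}\le \deg P_n(x,x^{M_j}) = M_j+3-n+\sum_{t\le n}p_t$. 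By \eqref{sumpmbound} this is at most $n^2(\log n+\log\log n)+M_j$, which is at most $n(n+2)(\log n+\log\log n)$ provided $M_j\le 2n(\log n+\log\log n)$. So the $M_j$ must lie in $(p_n,\,2n(\log n+\log\log n)]$. That window holds roughly $n$ primes, comparable to $p_n$, so we take $k_n\asymp n$ primes there.

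The key step is to bound $\sum_j C_j$. By Lemma~\ref{cardcnq}, each $S_r$ has a unique residue $t_r \bmod S_r$ with $P_n(\zeta_{S_r},\zeta_{S_r}^{t_r})=0$. Hence $\Phi_{S_r}\mid Q_n(x,x^{M})$ forces $M\equiv t_r \pmod{S_r}$. For a fixed $r$, the number of $j$ with $M_j\equiv t_r\pmod{S_r}$ is at most $1+\frac{2n(\log n+\log\log n)}{S_r}$, since the $M_j$ lie in an interval of that length. Summing over $r$ gives
\[\sum_j C_j\le \sum_{r}\#\{j: M_j\equiv t_r \pmod{S_r}\}.\]
The first difficulty is the "$+1$" term, which contributes $L_n\le 7n\log n$ by Lemma~\ref{boundL}. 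That is too big for averaging over only $k_n\asymp n$ exponents, because it would give average $C_j=O(\log n)$ rather than $o(n)$. In fact $O(\log n)$ is already $o(n)$, which is exactly what the theorem needs. So, after dividing by $k_n$, it suffices to show
\[\frac{1}{k_n}\Bigl(L_n+\sum_r\frac{2n\log n}{S_r}\Bigr)=o(n).\]

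The second term needs $\sum_r 1/S_r$ controlled. Since the $S_r$ are distinct integers greater than 1 and number at most $7n\log n$, we get $\sum_r 1/S_r\le \log(7n\log n)+1=O(\log n)$. This gives $\sum_r 2n\log n/S_r=O(n\log^2 n)$, and dividing by $k_n\asymp n$ yields $O(\log^2 n)=o(n)$. One can also refine this with $\varphi(S_r)$ via \eqref{Trbd} and Lemma~\ref{sizeIr}, as the paper's Lemmas~\ref{uppbdmrir}--\ref{boundknphitr} suggest, but the crude bound already suffices.

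Therefore some $j_0$ has $C_{j_0}\le \frac{1}{k_n}\sum_j C_j=o(n)$. Then $\tau_{M_{j_0}}$ has trace $-\lfloor n/2\rfloor+o(n)$ and the required degree. We expect the main obstacle to be the bookkeeping of the window for the $M_j$. It has to contain enough primes avoiding the $p_t$ and the prime divisors of the $S_r$; the latter number at most $\tfrac32\log n\cdot L_n$, but only those in the window matter, and residues can be handled by the same counting. At the same time the window must keep the degree inside $n(n+2)(\log n+\log\log n)$, and we must check the explicit inequality between the prime-counting bound and \eqref{sumpmbound}.
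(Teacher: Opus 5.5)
Your proof is correct and is essentially the paper's averaging argument. It uses the same inputs: Lemma~\ref{lem:traceQn}, Lemma~\ref{realtraces}, the unique admissible residue of $M$ modulo each $S_r$ from Lemma~\ref{cardcnq}, the bound $L_n<7n\log n$, and $\sum_r 1/S_r\le\log(L_n+1)$. These feed the same double count of $\sum_j C_j$.

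What differs is the family of exponents, and your choice is actually better:
\begin{itemize}
\item The paper averages over only $k_n=8\lceil(\log n)^2\rceil$ consecutive integers in $(p_n,2p_n)$. The $2L_n$ term then dominates, so the average of $C_j$ is of order $n/\log n$, and an extra $\omega_n^*(M_j)\le\log n$ enters the trace.
\item You average over $\asymp n$ primes in $(p_n,2n(\log n+\log\log n)]$. This makes $\omega_n^*(M_j)=0$ and gives an average of $O((\log n)^2)$. You therefore get $\tr(\tau_{M_{j_0}})=-\lfloor n/2\rfloor+O((\log n)^2)$ with the same degree bound.
\end{itemize}

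Two points need cleaning up.
\begin{itemize}
\item Drop the requirement that the $M_j$ avoid the prime divisors of the $S_r$. Nothing uses it: interlacing and Lemma~\ref{lem:traceQn} need only $M>p_n$ and $\gcd(M,p_n)=1$. As stated, you have not shown that enough such primes exist, and it is the only ``obstacle'' you raise.
\item Justify the number of primes in the window. For example, the Rosser--Schoenfeld bound $\pi(2x)-\pi(x)>3x/(5\log x)$ at $x=p_n$ gives $\gg n$ primes. In fact any $k_n$ with $k_n/(\log n)^2\to\infty$ already yields $C_{j_0}=o(n)$.
\end{itemize}
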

    
    The error term in the statement can be made explicit. For one, it is $\leq \frac{n\log\log n}{\log n}$. We show that for $n \geq$ $10^4$, there is a Salem number with a trace in $\left[-\frac{3}{2}\lfloor\frac{n}{2}\rfloor, -\frac{1}{2}\lfloor\frac{n}{2}\rfloor\right]$ and a degree at most $n(n+2)(\log n + \log\log n)$, which reduces the previously known doubly exponential bound to a polynomial bound.

    In the rest of the section, we propose a construction of a suitable number, $k_n$ to be precise (defined below), of positive integers $M_j$, and count the number of distinct $S_r$ such that $\Phi_{S_r}(x)$ is a cyclotomic factor of $Q_n(x, x^{M_j})$. Then we want to argue that for at least one of such $M = M_j$, the absolute value of the trace of the corresponding $Q_n(x, x^{M_j})$, as well as the Salem number associated with it, grows with $n$. This is achieved in the following steps. \begin{itemize}
        \item \hyperref[sec:knmj]{\textbf{Constructing integers $M_j$}}: A set of $k_n = 8\lceil(\log n)^2\rceil$ consecutive integers $M_j$ is chosen such that $p_n < M_j < 2p_n$ (and therefore $\gcd(M_j, p_n) = 1$, see Section~\ref{congruencemj}). This construction (specifically, the coprimality part), along with the discussions in Section~\ref{preinter}, guarantees circular interlacing conditions such that the resulting polynomials $Q_n(x, x^{M_j})$ produce Salem numbers. (See Remark~\ref{choicesmallkn} for more details on the particular choice of $k_n$.)

        \item \hyperref[sec:cyclofactor]{\textbf{Estimation of the number of cyclotomic factors}}: According to Lemma~\ref{realtraces}, if $C_j$ denotes the number of cyclotomic polynomials $\Phi_{S_r}(x)$ that divide $Q_n(x, x^{M_j})$ (counting multiplicity, that is, $m_r$), then our ideal $M_j$ would be the one where $C_j$ is not large (that is, $o(n)$). To do that, for every $S_r$ we first bound the number of integers $M_j$ such that $Q_n(\zeta_{S_r}, \zeta_{S_r}^{M_j})=0$ (see Lemma~\ref{uppbdmrir} and \eqref{srinsteadtr}) and therefore the number of $M_j$, for which $\Phi_{S_r}(x)$ is a factor of $Q_n(x, x^{M_j})$. We use this bound in the next section to understand the sum of all $C_j$.

        \item \hyperref[sec:avgcj]{\textbf{Averaging $C_j$}}: By counting the sum of all $k_n$ many $C_j$ in two different ways leads to an upper bound of $\sum_{j=1}^{k_n} C_j$ (see Lemmas~\ref{boundsumcj} and~\ref{boundknphitr}).

        \item \hyperref[sec:pfthm]{\textbf{Conclusion of the bound, proof of Theorem~\ref{finalthm}}}: Comparing this upper bound with the total sum that would result if all $C_j$ were large $\left(\,\text{specifically}\, C_j \geq \frac{n\log\log n}{\log n}\right)$, we arrive at a contradiction for all large $n$ (more precisely for $n \geq 10^4$). The contradiction implies that there must be one $M_{j}$ where $C_{j} = o(n)$. For this specific $M_j$, the degree of the resulting polynomial $Q_n(x, x^{M_{j}})$ provides the new improved bound on the degree of the minimal polynomial of the Salem number with a trace lying between $-\frac{3}{2}\lfloor\frac{n}{2}\rfloor$ and $-\frac{1}{2}\lfloor\frac{n}{2}\rfloor$, for $n \geq 10^4$.
        
        \end{itemize}
    
    Next, we expand on these steps. 
    \bigskip
    
    \subsection{Constructing integers \texorpdfstring{$M_j$}{M\_j}}\label{sec:knmj}
    
    Note that $k_n = 8\lceil (\log n)^2 \rceil$ implies $k_n = O((\log n)^2)$. Moreover, \eqref{pmbound} and \eqref{primorialbd} yield, for $n \geq 10^4$, \begin{equation}\label{smallkn}
        k_n < 8(\log n)^2 + 8 < n\log n - 1 < p_n - 1.
    \end{equation} As mentioned above, we want to carefully choose consecutive integers $p_n < M_1 < \cdots < M_{k_n} < 2p_n$. The choice ensures that $\gcd(M_j, p_n) = 1$ for all $1 \leq j \leq k_n$. From \eqref{smallkn}, the choice also confirms that $M_i \not\equiv M_j \Mod {p_n}$ for all $i \neq j$. In the next section, we show that such a construction indeed exists.
    
    \bigskip
    
    \subsubsection{Choices of congruences}\label{congruencemj} Let $p_m$ be the $m$-th prime such that $ p_n < p_m < 2p_{n}$ and $m \geq n+1$.  For $n \geq 1$, the existence of such a prime follows from Bertrand's postulate~\cite[Corollary 3, eq. (3.9)]{RS}. Next, we choose $k_n$ consecutive integers $M_j$ such that $p_n < M_j < 2p_n$ for all $1 \leq j \leq k_n$, and, for some $1 \leq j_0 \leq k_n$, $M_{j_0} := p_m$. 
    
    Note that the choices of $M_j$ ensure that $\gcd(M_j, p_n) = 1$ and $M_j > p_n$, which is the necessary and sufficient condition for the ratio $\frac{x^{p_n + M_j} - 1}{(x^{p_n}-1)(x^{M_j} - 1)}$ to satisfy the circular interlacing condition (see \eqref{thefamily}). Furthermore, from \eqref{pmbound}, we have \[n\log n < p_n < M_j < 2p_n < 2n(\log n + \log\log n).\] Since $p_n < M_j < 2p_n$, \[p \mid M_j \Longrightarrow \left\{\begin{array}{cc}
       p \leq p_{n-1},  & \quad \text{when} \, \, \omega(M_j) \geq 2, \\
       p = p_m  & \quad \text{when} \, \, M_j = p_m.
    \end{array}\right.\] A similar argument as in Lemma~\ref{sizeIr} applied to $\omega(M_j)$ with the bounds in \eqref{pmbound} implies that, for $n \geq 10^4$, \[\omega_n^*(M_j) \leq \omega(M_j) \leq \log n,\] where $\omega_n^*(M)$ is as defined in \eqref{modifiedomega}.

    \subsection{Estimating the number of cyclotomic factors}\label{sec:cyclofactor}
    
    If $\tau_{M_j}$ is the Salem number such that $Q_n(\tau_{M_j}, \tau_{M_j}^{M_j}) = 0$, then, to obtain information about $\tr(\tau_{M_j})$, Lemma~\ref{realtraces} tells us that it is important to investigate \[ \#\{S: Q_n(\zeta_{S}, \zeta_{S}^{M_j}) = 0\}. \] To begin with, in the following discussions, we establish an upper bound for \[\sum_{j=1}^{k_{n}}  \#\{S: Q_n(\zeta_{S}, \zeta_{S}^{M_j}) = 0\}.\]    
    
    Our first step towards that goal is to upper bound the cardinality of the set \[\left\{M_j: 1 \leq j \leq k_n, \, M_j \equiv a \Mod {S},\, 0 \leq a < S\right\}\] for any $S$. In fact, we will consider the set, for integers $q > 1$ such that $\#\mathcal{C}_{n, q}^* = 1$, \[\mathcal{M}_q := \left\{M_j: 1 \leq j \leq k_n, \, M_j \equiv a \Mod {q},\, 0 \leq a < q\right\}.\]

    \begin{lem}\label{uppbdmrir}
        The cardinality of $\mathcal{M}_q$, is bounded above by $\frac{k_n}{q} + 1$.
    \end{lem}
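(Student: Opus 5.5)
The set $\mathcal{M}_q$ is meant to be read for a fixed residue $a$: it consists of those $M_j$ in the family lying in one prescribed residue class modulo $q$. The bound should then follow from the fact that the $M_j$ are $k_n$ consecutive integers.

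We write the family as $M_j = M_1 + (j-1)$ for $1 \le j \le k_n$, so the $M_j$ fill the integer interval $I = [M_1, M_1 + k_n - 1]$ of length $k_n$. Fix $a$ with $0 \le a < q$. The integers in $I$ congruent to $a$ modulo $q$ form an arithmetic progression with common difference $q$. Let $M_{j_1}$ be the smallest such element, if one exists. Then every element of $\mathcal{M}_q$ has the form $M_{j_1} + sq$ with $s \ge 0$ and $M_{j_1} + sq \le M_1 + k_n - 1$.

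This gives $sq \le k_n - 1 - (M_{j_1} - M_1) \le k_n - 1$, so $s \le (k_n-1)/q$. Counting the values $s = 0, 1, \dots, \lfloor (k_n-1)/q \rfloor$ yields
\[
\#\mathcal{M}_q \le \left\lfloor \frac{k_n-1}{q}\right\rfloor + 1 \le \frac{k_n}{q} + 1 .
\]
If no $M_j$ lies in the class $a$, then $\#\mathcal{M}_q = 0$ and the bound holds trivially.

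The only delicate point is interpretive. The set must be taken for a fixed residue class $a$, which in the application is the class $t$ with $(\zeta_q,\zeta_q^t)\in\mathcal{C}^*_{n,q}$, unique by Lemma~\ref{cardcnq}. We also need the $M_j$ to be genuinely consecutive, which is guaranteed by the construction in Section~\ref{congruencemj}. No property of $q$ beyond $q \ge 1$ is used, so the argument is uniform in $q$.
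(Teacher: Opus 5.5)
Your proof is correct and takes the same approach as the paper: fix a residue class $a \bmod q$ and use that the $M_j$ are $k_n$ consecutive integers, and your count $\lfloor (k_n-1)/q\rfloor + 1$ is exactly the paper's $\lceil k_n/q\rceil$. Your remark that $\mathcal{M}_q$ must be read for a single fixed residue $a$ (otherwise it would contain all $k_n$ of the $M_j$) matches how the paper states and uses the lemma.
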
 
    
    \begin{proof}
       Since $M_j$ are consecutive integers, the number of $M_j$ that is congruent to $a \Mod q$ for a fixed residue $ 0 \leq a < q$ is at most\[\left\lceil\frac{k_n}{q}\right\rceil \leq \frac{k_n}{q} + 1,\] which completes the proof.
        \end{proof}
    
    This implies that for $q = S$, \begin{equation}\label{srinsteadtr}
       \# \mathcal{M}_{S} = \#\left\{M_j: 1 \leq j \leq k_n, \, M_j \equiv b \Mod {S}, 0 \leq b < S\right\} \leq \frac{k_n}{S} + 1.
    \end{equation}
    
    \medskip
    
    \subsection{Averaging \texorpdfstring{$C_j$}{C\_j}}\label{sec:avgcj}
    
        Recall that $C_j$ denotes the number of $S$ (counting multiplicity) such that $\Phi_{S}(x)$ divides $Q_n(x, x^{M_j})$, that is, \[C_j \leq  2\#\{S:  Q_n(\zeta_{S}, \zeta_{S}^{M_j}) = 0\}.\] By Lemma~\ref{realtraces}, it follows that the trace of the Salem number $\tau_{M_j}$ associated with $Q_n(x, x^{M_j})$ is bounded above by $-\lfloor \frac{n}{2}\rfloor +\omega_n^*(M_j) + C_j$ (where $\omega_n^*(M_j)$ is defined in \eqref{modifiedomega}), that is, \begin{equation}\label{tracebound}
            \tr(\tau_{M_j}) \leq \tr(Q_n(x, x^{M_j})) - \sum_{\substack{S \\ \Phi_{S}(x) \mid Q_n(x, x^{M_j})}} m_r\tr(\Phi_{S}(x)) \le  -\left\lfloor\frac{n}{2}\right\rfloor + \omega_n^*(M_j) + C_j,
        \end{equation} since $\tr(\Phi_{S}(x)) = \mu(S) \in \{-1,0,1\}$. From Lemma~\ref{realtraces}, we can further conclude that \begin{equation}\label{lowerbdspecific}
            \tr(\tau_{M_j}) \geq  -\left\lfloor\frac{n}{2}\right\rfloor + \omega_n^*(M_j) - C_j \geq -\left\lfloor\frac{n}{2}\right\rfloor - C_j.
        \end{equation}
        
        We argue that at least for some $j$ the absolute value of $\tr(\tau_{M_j})$ grows with $n$, that is, for example, $\tau_{M_j} \in \left[-\frac{3}{2}\lfloor\frac{n}{2}\rfloor, -\frac{1}{2}\lfloor\frac{n}{2}\rfloor\right)$ for some $j$ and sufficiently large $n$. Following \eqref{tracebound} and since $\omega(M_j) \leq \log n$, we therefore need to show that there exists $C_j$ that grows as $o(n)$. Once such $C_j$ is obtained, the corresponding degree of $Q_n(x, x^{M_j})$ is then an upper bound for the minimal polynomial of the Salem number $\tau_{M_j}$ with $|\tr(\tau_{M_j})| = O(n)$, and since $M_j < 2n(\log n + \log\log n)$, this improves the previously known bound in~\cite{MS}. The next lemmas are essential steps towards achieving our claimed improvement. 
    
    \medskip
    
    \subsubsection{Bounding sum of all $C_j$}
    
    To systematically count the cyclotomic factors across all chosen parameters $M_j$, we construct an incidence-like $L_n \times k_n$ matrix $\mathcal{D}_n$ with entries either $0$ or $1$ or $2$. The element of the matrix is defined as follows for $1 \leq r \leq L_n$ and $1 \leq j \leq k_n$,  
    \[\mathcal{D}_{n, rj} = \begin{cases}
        2 & \,  \text{when} \, \Phi_{S_r}^2(x) \parallel Q_n(x, x^{M_j}), \\
       1  & \, \text{when} \, \Phi_{S_r}(x) \parallel Q_n(x, x^{M_j}), \\
       0  & \, \text{when} \, \Phi_{S_r}(x) \nmid Q_n(x, x^{M_j}),
    \end{cases}\] where the bounds on multiplicity $m_r\in \{0,1,2\}$ follows strictly from the separability of $c_{n,M}$ in Lemma~\ref{qnsep}. Note that since, for all $r$, $\#\mathcal{C}_{n, S_r}^* = 1$, the matrix exhaustively captures, as well as computes, the cyclotomic factors $\Phi_{S_r}(x)$ of $Q_n(x, x^{M_j})$, counting multiplicity. Then we have the following result. 
    \begin{lem}\label{boundsumcj}
        We have \[\sum_{j=1}^{k_n} C_j \leq 2L_n + 2\sum_{r=1}^{L_n} \frac{k_n}{S_r}.\]
    \end{lem}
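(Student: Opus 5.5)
We double count the nonzero entries of $\mathcal{D}_n$. By construction, $C_j$ is the $j$-th column sum of $\mathcal{D}_n$. Every cyclotomic factor $\Phi_S$ of $Q_n(x,x^{M_j})$ with $S>1$ has $\#\mathcal{C}_{n,S}^*=1$, so it is one of $\Phi_{S_1},\dots,\Phi_{S_{L_n}}$. Its multiplicity is at most $2$ by Lemma~\ref{qnsep}, and $\Phi_1$ does not occur because $Q_n(1,1)\neq 0$. Hence
\[\sum_{j=1}^{k_n} C_j=\sum_{j=1}^{k_n}\sum_{r=1}^{L_n}\mathcal{D}_{n,rj}=\sum_{r=1}^{L_n}\sum_{j=1}^{k_n}\mathcal{D}_{n,rj}\le 2\sum_{r=1}^{L_n}\#\{j:\ \Phi_{S_r}(x)\mid Q_n(x,x^{M_j})\}.\]
It therefore suffices to show that each row has at most $\frac{k_n}{S_r}+1$ nonzero entries.

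Fix $r$. By Lemma~\ref{cardcnq}, $\mathcal{C}_{n,S_r}^*$ consists of a single point $(\zeta_{S_r},\zeta_{S_r}^{t_r})$. Suppose $\Phi_{S_r}(x)\mid Q_n(x,x^{M_j})$. Then $\Phi_{S_r}$ divides $P_n(x,x^{M_j})$ by \eqref{mainploy*}, so $P_n(\zeta_{S_r},\zeta_{S_r}^{M_j})=0$. This says $(\zeta_{S_r},\zeta_{S_r}^{M_j})$ is a cyclotomic point lying in $\mathcal{C}_{n,S_r}^*$.

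The uniqueness argument in Lemma~\ref{cardcnq} gives $M_j\equiv t_r \pmod{S_r}$. When $S_r$ is not among $p_1,\dots,p_n$, this comes from $f(\zeta_{S_r})\ne 0$ (Remark~\ref{zerooff}), which makes $\zeta_{S_r}^{M_j}=g(\zeta_{S_r})/f(\zeta_{S_r})$ uniquely determined. When $S_r$ is one of the $p_t$, the point forces $\zeta_{S_r}^{M_j}=1$, so again $M_j$ lies in a single residue class.

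In either case, every $j$ with $\mathcal{D}_{n,rj}\neq0$ has $M_j$ in one fixed residue class modulo $S_r$. Lemma~\ref{uppbdmrir}, in the form \eqref{srinsteadtr}, then bounds the number of such $j$ by $\frac{k_n}{S_r}+1$. Summing over $r$ gives
\[\sum_{j=1}^{k_n} C_j\le 2\sum_{r=1}^{L_n}\Bigl(\frac{k_n}{S_r}+1\Bigr)=2L_n+2\sum_{r=1}^{L_n}\frac{k_n}{S_r}.\]

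The main point to check is the single-residue-class claim. It needs the divisibility $\Phi_{S_r}\mid Q_n(x,x^{M_j})$ to produce a point of $\mathcal{C}_{n,S_r}^*$ with second coordinate exactly $\zeta_{S_r}^{M_j}$, which it does. The remaining care is to use the uniqueness of $t$ modulo $S_r$, not merely $\#\mathcal{C}^*_{n,S_r}\le1$. This uniqueness is exactly what the proof of Lemma~\ref{cardcnq} establishes.
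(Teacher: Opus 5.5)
Your proposal is correct and follows the paper's proof: you double count the entries of $\mathcal{D}_n$, bound each row sum by twice the number of $M_j$ in a single residue class modulo $S_r$ (the paper's $2\#\mathcal{M}_{S_r}$), and apply Lemma~\ref{uppbdmrir}. Your explicit argument that every relevant $M_j$ satisfies $M_j \equiv t_r \pmod{S_r}$ spells out a step the paper leaves implicit in its loosely worded definition of $\mathcal{M}_q$. Note that $\#\mathcal{C}^*_{n,S_r}=1$ already gives this on its own, since distinct residues $t$ give distinct points $(\zeta_{S_r},\zeta_{S_r}^t)$, so the extra caution in your last paragraph is not needed.
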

    
    \begin{proof}
        Note that \[\sum_{r=1}^{L_n} \mathcal{D}_{n, rj} = C_j, \quad \text{and} \quad \sum_{j=1}^{k_n} \mathcal{D}_{n, rj} \leq 2\#\mathcal{M}_{S_r}.\] Therefore, \[\sum_{j=1}^{k_n} C_j = \sum_{j=1}^{k_n}\sum_{r=1}^{L_n} \mathcal{D}_{n, rj} = \sum_{r=1}^{L_n}\sum_{j=1}^{k_n} \mathcal{D}_{n, rj} \leq 2\sum_{r=1}^{L_n} \#\mathcal{M}_{S_r} \leq 2\sum_{r=1}^{L_n} \left(\frac{k_n}{S_r} + 1\right) = 2L_n + 2\sum_{r=1}^{L_n} \frac{k_n}{S_r},\] where the last bound follows from Lemma~\ref{uppbdmrir}.
    \end{proof}
    
    \medskip
    
    \subsubsection{Bounding $\sum_{r=1}^{L_n} \frac{k_n}{S_r}$} In this section, we establish a bound of $\sum_{r=1}^{L_n} \frac{k_n}{S_r}$ for a given $k_n$, and use it in the next section to bound the growth of $\sum_{j=1}^{k_n} C_j$ in terms of $n$. We obtain such a bound in the next lemma.

    \begin{lem}\label{boundknphitr}
        For $n \geq 10^4$, \begin{equation}\label{bound ofkn/phisr}
            \sum_{r=1}^{L_n} \frac{k_n}{S_r}   < k_n\left(\log 7 + \log n + \log\log n +1\right).
        \end{equation}
    \end{lem}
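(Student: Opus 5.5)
Since $k_n$ does not depend on $r$, the plan is to factor it out and bound the harmonic-type sum $\sum_{r=1}^{L_n} 1/S_r$ by a partial harmonic series. The $S_r$ are distinct integers $>1$ ordered as $S_1<\cdots<S_{L_n}$, so $S_r\ge r+1$. Hence
\[
\sum_{r=1}^{L_n}\frac{1}{S_r}\le \sum_{r=1}^{L_n}\frac{1}{r+1}< \sum_{m=1}^{L_n}\frac1m \le 1+\log L_n .
\]
Here the last step is the standard integral comparison $\sum_{m\le N}1/m\le 1+\log N$.

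Next I would invoke Lemma~\ref{boundL}, which gives $L_n<7n\log n$ for $n\ge 10^4$. Therefore
\[
\log L_n<\log 7+\log n+\log\log n .
\]
Multiplying through by $k_n$ gives
\[
\sum_{r=1}^{L_n}\frac{k_n}{S_r}<k_n\left(1+\log 7+\log n+\log\log n\right),
\]
which is exactly \eqref{bound ofkn/phisr}.

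The only point needing care is the strictness of the inequality, and it is immediate. The comparison $\sum_{r\le L_n}1/(r+1)<\sum_{m\le L_n}1/m$ is already strict, since the left side omits the term $1$ and adds the term $1/(L_n+1)<1$. The bound from Lemma~\ref{boundL} is strict as well. There is no real obstacle: the one substantive input is Lemma~\ref{boundL}. If one prefers, the observation in its proof that $S_r>r$ (because the $S_r$ avoid $1$) gives the same conclusion.
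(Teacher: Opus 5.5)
Your proof is correct and takes essentially the same route as the paper. Both arguments bound $\sum 1/S_r$ by a harmonic sum, using that the $S_r$ are distinct integers exceeding $1$, and then apply $L_n<7n\log n$ from Lemma~\ref{boundL}. The only cosmetic difference is at the end: the paper compares $\sum_{i=2}^{L_n+1}1/i$ with $\log(L_n+1)$ and crudely absorbs $\log\bigl(1+\frac{1}{7n\log n}\bigr)$ into the $+1$, whereas you use $\sum_{m\le L_n}1/m\le 1+\log L_n$ directly.
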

    
    \begin{proof}
        By definition, the values $S_r$ are $L_n$ distinct integers greater than $1$. To maximise the sum of their reciprocals, it suffices to assume that they are the smallest such possible integers $2,3,\ldots,L_n+1$. Therefore, 
        \begin{align*}
            \sum_{r = 1}^{L_n} \frac{k_n}{S_r} \leq \sum^{L_n+1}_{i=2}\frac{k_n}{i} \leq k_n\int^{L_n+1}_{1}\frac{1}{x}dx&=k_n\log(L_n+1)\\
            &\le  k_n\log(7n\log n +1)\\ &=  k_n\left(\log(7n\log n)  + \log\left(1 + \frac{1}{7n\log n}\right)\right)\\
            &\le k_n(\log 7+\log n +\log\log n +1)
        \end{align*}
        where the penultimate inequality follows from Lemma~\ref{boundL}. 
    \end{proof}
    
    \medskip
    
    \subsection{Proof of Theorem~\ref{finalthm}}\label{sec:pfthm}
    
    Now we are ready to improve the upper bound of the degree of the minimal polynomial of the associated Salem numbers.
        
        \begin{proof}[Proof of Theorem~\ref{finalthm}]\label{pfthm}
            Collecting the bounds from previous sections and recalling that $k_n = 8\lceil (\log n)^2 \rceil$, we have \begin{align}
                \sum_{j=1}^{k_n} C_j \leq & \, 2L_n + 2\sum_{r=1}^{L_n} \frac{k_n}{S_r} \nonumber \\ <& \, 2L_n +  2k_n\left(\log 7 + \log n + \log\log n +1\right) \nonumber \\ <& \, 14n\log n +  16((\log n)^2 +  1)\left(\log 7 + \log n + \log\log n+1\right) \label{ultmbd}
            \end{align} where the final inequality follows by substituting $L_n<7n\log n$ (Lemma~\ref{boundL}) and $k_n<8((\log n)^2+1)$. 
    
       On the other hand, if $C_j \geq \frac{n \log\log n}{\log n}$ for all $j$, then, since $k_n = 8\lceil (\log n)^2 \rceil$, we obtain \[8n\log n\log\log n \leq k_n\frac{n\log\log n}{\log n} \leq \sum_{j=1}^{k_{n}} C_j.\] Since, for $n \geq 10^4$, a similar reason as in the proof of Lemma~\ref{boundtr} gives \begin{align}
           \frac{14n\log n +  16((\log n)^2 +  1)\left(\log 7 + \log n + \log\log n+1\right)}{8n\log n\log\log n}  < \, 1, \label{choiceofkn}
       \end{align} we arrive at a contradiction. Therefore, for $n \geq 10^4$, there exists at least one $j_0 \in \{1, \dots, k_n\}$ such that $C_{j_0} < \frac{n\log\log n}{\log n}$, that is $C_{j_0} = o(n)$. This, combined with Lemma~\ref{realtraces} and the inequality in \eqref{lowerbdspecific}, implies that, for the corresponding $j_0$, \[\tr(\tau_{M_{j_0}}) \in \left[-\left\lfloor\frac{n}{2}\right\rfloor  - C_{j_0}, -\left\lfloor\frac{n}{2}\right\rfloor + \omega_n^*(M_{j_0}) + C_{j_0}\right],\] where $\tau_{M_{j_0}}$ is the Salem number which is a root of $Q_{n}(x, x^{M_{j_0}})$, and, since $C_{j_0} = o(n)$ (and $\omega_n^*(M_{j_0}) \leq \log n= o(n)$), we obtain that $\tr(\tau_{M_{j_0}}) = -\left\lfloor\frac{n}{2}\right\rfloor + o(n)$, as we proposed to show.
        
        Further, from the construction of $M_j$, we have $p_n \nmid M_j$ and $M_j > p_n$ for all $1 \leq j \leq k_n$. Therefore, all such $M_j$ and corresponding $Q_n(x, x^{M_j})$ satisfy the interlacing conditions and produce a Salem number with trace bounded above by $-\lfloor \frac{n}{2} \rfloor + \omega_n^*(M_j) + C_j$. Moreover, since \[M_j < 2p_n < 2n(\log n + \log\log n)\] for $j$, we have \[ \deg Q_n(x, x^{M_j}) = M_j - \sum_{p \mid M_j}' \deg \Phi_p (x) + 2- n + \sum_{t=1}^n p_t = M_j - \sum_{p \mid M_j}'(p-1) + 2- n + \sum_{t=1}^n p_t,\] where the sum $\sum_{p \mid M_j}' (p-1)$ is $0$ when $M_j$ is a prime, otherwise it is $\sum_{p \mid M_j} (p-1)$, and, since $\sum_{p \mid M_j} (p-1) \leq \prod_{p \mid M_j} p \leq M_j < 2p_n$, we derive, using \eqref{pmbound} and \eqref{sumpmbound}, for $ n \geq 6$, \begin{equation}\label{degreebduplow}
            4 < 2- n + \sum_{t=1}^n p_t < \deg Q_n(x, x^{M_j}) <  2p_n +2 - n + \sum_{t=1}^n p_t < n(n+2)(\log n + \log\log n),
        \end{equation}  which concludes the proof.
        \end{proof}

        \begin{rem}\label{choicesmallkn}
            The reason behind the particular choice of $k_n$ is related to obtaining a contradiction as in \eqref{choiceofkn}. In particular, first note that the term $7n\log n$ is the dominant term in \eqref{ultmbd}, which is the upper bound of $L_n$, a term independent of $k_n$. To obtain a contradiction under the assumption $C_j \geq \frac{n\log \log n}{\log n}$, we need to choose $k_n$ so that the denominator of the LHS in \eqref{choiceofkn} grows faster than $n\log n$ as $n$ grows. Although our choice satisfies that, it is not optimised, as any choice of $k_n$ of the size $O\left(\frac{\log n}{(\log \log n)^{1-\epsilon}}\right)$ should work ($ \epsilon > 0$), but at the cost of the lower bound ($n_0$) of such $n$ where our theorem should hold (that is, for all $n \geq n_0$). Indeed, for example, in the cases with exponent $\epsilon < 1$, the lower bound is much larger than $10^4$. Our choice of $k_n$ is therefore based on obtaining a smaller value of $n_0$ and also satisfying \eqref{smallkn} and \eqref{choiceofkn}. 
        \end{rem}

    \medskip
    
    \section{Proof of Theorem~\ref{thm:uplowbd}}\label{sec:mainproof}
    \subsection{Bounds on \texorpdfstring{$D(T)$}{D(T)}}\label{sec:dict}
    
    Recall that for an algebraic number $\beta$, $\tr(\beta)$ is defined as the trace of its minimal polynomial over $\Z$. Throughout this section, $\tau$ is a Salem number of degree $2d$ and $\tr(\tau)$ is a negative integer. Recall that for $T \in \N$,
    	\[
    	D(T)=\min\{\deg\tau:\ \tr(\tau)\le -T\}.
    	\] Then we have the following lemma.
    	
    	\begin{lem}\label{lem:dict}
    		For a Salem number $\tau$ of degree $2d\ge 4$ with $\tr(\tau) = -T$ for some positive integer $T$, put
    		\[
    		\alpha:=\tau+\tau^{-1}+2 .
    		\]
    		Then $\alpha$ is a totally positive algebraic integer of degree exactly $d$, $ \alpha > 4$, and all the other conjugates are in the interval $(0, 4)$. Moreover,
    		\[
    		\tr(\alpha)=2d-T,\qquad \alpha <\tr(\alpha)<2d .
    		\]
    		Conversely $\deg\tau=2\deg\alpha$.
    	\end{lem}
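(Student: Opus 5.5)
The plan is to work directly with the conjugates of $\tau$. Let $P(x)$ be the minimal polynomial of $\tau$, reciprocal of degree $2d$. Its roots are $\tau$, $\tau^{-1}$, and $d-1$ pairs $\{z_k,\bar z_k\}$ with $|z_k|=1$, $z_k\neq\pm1$. Here $z_k\neq\pm1$ because $P$ is irreducible of degree $\ge4$, and $z_k\neq\bar z_k$ for the same reason. Since $P(x)=x^{d}\,G(x+x^{-1})$ for a monic $G\in\Z[y]$ of degree $d$ (standard for even-degree reciprocal polynomials), the numbers $\tau+\tau^{-1}$ and $z_k+\bar z_k=2\cos\theta_k$ are algebraic integers. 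Hence $\alpha=\tau+\tau^{-1}+2$ is an algebraic integer, a root of the monic integer polynomial $G(y-2)$ of degree $d$.

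Next I will show that $\deg\alpha=d$ exactly, so $G(y-2)$ is its minimal polynomial. If $\alpha$ had degree $e<d$, then $\tau$ would satisfy $x^{e}H(x+x^{-1}+2)=0$ with $H$ the minimal polynomial of $\alpha$. This polynomial has degree $2e<2d$ and does not vanish identically, which contradicts $\deg\tau=2d$. Equivalently, $\Q(\tau)\supseteq\Q(\alpha)$ with $[\Q(\tau):\Q(\alpha)]\le2$, so $\deg\alpha\ge d$. Thus the conjugates of $\alpha$ are exactly the images of the $d$ pairs $\{w,w^{-1}\}$ of roots of $P$. These are $\tau+\tau^{-1}+2>4$, since $\tau>1$ and $t+t^{-1}>2$, and $2+2\cos\theta_k\in(0,4)$, since $\theta_k\notin\pi\Z$. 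So $\alpha$ is totally positive with one conjugate above $4$ and the rest in $(0,4)$. The converse $\deg\tau=2\deg\alpha$ is then the same statement read backwards.

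For the trace, sum over the conjugate pairs: $\tr(\alpha)=\sum_{\text{pairs}}(w+w^{-1}+2)=\tr(\tau)+2d=2d-T$. Then $\tr(\alpha)<2d$ because $T\ge1$. For $\alpha<\tr(\alpha)$, note that $\tr(\alpha)-\alpha$ is the sum of the other $d-1\ge1$ conjugates, each in $(0,4)$, and so it is strictly positive.

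The only point needing care is the exact degree claim, in particular ruling out that distinct pairs of roots of $P$ give the same value of $\alpha$. This is handled by the degree argument above. I expect no real obstacle, only bookkeeping in justifying $P(x)=x^dG(x+x^{-1})$ with $G$ monic integral, which follows by induction using $x^k+x^{-k}\in\Z[x+x^{-1}]$.
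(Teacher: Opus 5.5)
Your proof is correct and follows essentially the same route as the paper. You pass to $\tau+\tau^{-1}$, identify the conjugates of $\alpha$ with the values $w+w^{-1}+2$ over the $d$ pairs $\{w,w^{-1}\}$ of roots of the minimal polynomial, and sum them to obtain the trace and the inequalities. The one difference is that the paper cites Smyth's survey (and Salem) for the fact that $\tau+\tau^{-1}$ has degree exactly $d$ with its other conjugates in $(-2,2)$, whereas you prove this directly from $P(x)=x^{d}G(x+x^{-1})$ and $[\mathbb{Q}(\tau):\mathbb{Q}(\alpha)]\le 2$.
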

    	
    	\begin{proof} From \cite[Proposition~3 (i)]{SSurvey} (see also \cite[p.~169]{Salem}), it follows that $\tau + \tau^{-1} = \alpha - 2 > 2$ is an irrational algebraic integer of degree $d$ with all the other conjugates in $(-2, 2)$. Adding $2$ gives the stated ranges, so $\alpha$ is totally positive and of degree exactly $d$.
    		
    		 Finally, since all the other conjugates of $\tau$ lie on the unit circle except $\tau^{-1}$, we have 
    		\[
    		\tr(\alpha)=\tr(\tau + \tau^{-1}) + 2d = \tr(\tau)+2d=2d-T,
    		\]
    		and $\alpha <\tr(\alpha)$ because $\alpha$ is totally positive of degree $d\ge2$. Moreover, $\tr(\alpha)=2d-T<2d$.
    	\end{proof}

        The next proposition provides an asymptotic upper bound for $D(T) $.
    
        \begin{prop}\label{prop:upper}
            For all sufficiently large $T$, there is a Salem number
    		$\tau$ with $\tr\tau\le-T$ and
    		\[
    		\deg\tau\ \le\ 2\Big(1+o(1)\Big)T^{2}\log T .
    		\]
    		Therefore, \begin{equation}\label{upperbdDT}
    		    D(T)\le(2+o(1))\,T^{2}\log T.
    		\end{equation} 
    	\end{prop}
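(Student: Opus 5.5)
Given a large $T$, I would invoke Theorem~\ref{finalthm} with a suitably chosen odd $n$ and check that the Salem number it produces has trace at most $-T$. Theorem~\ref{finalthm} (in its explicit form) gives, for odd $n \geq 10^4$, a Salem number $\tau_{M_{j_0}}$ with
\[\tr(\tau_{M_{j_0}}) \le -\left\lfloor\tfrac{n}{2}\right\rfloor + \omega_n^*(M_{j_0}) + C_{j_0} \le -\tfrac{n-1}{2} + \log n + \tfrac{n\log\log n}{\log n}\]
and degree less than $n(n+2)(\log n+\log\log n)$. The error term $\varepsilon(n):=\log n + n\log\log n/\log n$ is $o(n)$.

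I would choose $n$ as the smallest odd integer such that $\frac{n-1}{2} - \varepsilon(n) \ge T$. Since $\varepsilon(n) = o(n)$, the function $\frac{n-1}{2}-\varepsilon(n)$ is eventually increasing, tends to infinity, and has increments $1+o(1)$ when $n$ goes up by $2$. Hence this minimal $n$ satisfies $n = 2T(1+o(1))$, and more precisely $n \le 2T + 2\varepsilon(n) + O(1) = 2T\bigl(1 + O(\log\log T/\log T)\bigr)$. For $T$ large this $n$ is at least $10^4$, so Theorem~\ref{finalthm} applies and produces a Salem number $\tau$ with $\tr(\tau) \le -T$.

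For the degree, substituting $n=2T(1+o(1))$ gives
\[\deg\tau < n(n+2)(\log n+\log\log n) = 4T^2(1+o(1))\bigl(\log T + \log 2 + \log\log n + o(1)\bigr) = 4(1+o(1))T^2\log T.\]
This is a factor $2$ worse than the claimed $2(1+o(1))T^2\log T$, and this is the main obstacle. Recovering the factor should come from a sharper degree count, not the crude bound in \eqref{degreebduplow}. By \eqref{degreebduplow}, the degree is about $\sum_{t\le n} p_t + O(n\log n)$, and by the prime number theorem $\sum_{t\le n}p_t \sim \frac{1}{2}n^2\log n$, not $n^2\log n$; the Rosser--Schoenfeld bound \eqref{sumpmbound} loses exactly this factor $2$. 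I would therefore use $\sum_{t\le n}p_t = (\tfrac12+o(1))n^2\log n$. Then
\[\deg\tau \le \left(\tfrac12+o(1)\right)n^2\log n + O(n\log n) = \left(\tfrac12+o(1)\right)4T^2\log T = 2(1+o(1))T^2\log T.\]
Since the degree bound is monotone in $n$, and $D(T)$ is a minimum over Salem numbers with trace at most $-T$, this yields \eqref{upperbdDT}.
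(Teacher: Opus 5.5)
Your proposal is correct and follows the paper's proof essentially verbatim. You choose an odd $n = 2T\bigl(1+O(\log\log T/\log T)\bigr)$ so that the explicit trace bound $-\lfloor n/2\rfloor+\log n+n\log\log n/\log n$ from Theorem~\ref{finalthm} is at most $-T$, and you recover the factor $2$ exactly as the paper does, by replacing \eqref{sumpmbound} with the asymptotic $\sum_{t\le n}p_t=(\tfrac12+o(1))n^2\log n$ and $p_n=O(n\log n)$ in the middle inequality of \eqref{degreebduplow}.
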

    	
    	\begin{proof}
    		Let $n$ be odd, $n \ge 10^4$, and let $M_{j_0}$, $Q_n(x,x^{M_{j_0}})$ and $\tau_{M_{j_0}}$ be as in the
    		proof of Theorem~\ref{finalthm}. Then
    		\[
    		\tr(\tau_{M_{j_0}})\ \le\ -\Big\lfloor\frac n2\Big\rfloor+\omega_n^{*}(M_{j_0})+C_{j_0}
    		\ \le\ -\Big\lfloor\frac n2\Big\rfloor+\log n+\frac{n\log\log n}{\log n}.
    		\]
    		Hence $\tr(\tau_{M_{j_0}})\le -T$ as soon as
    		$\lfloor n/2\rfloor-\log n-n\log\log n/\log n\ge T$, which is true for
    		$n=2T\big(1+O(\log\log T/\log T)\big)$ where $n$ is odd.
    		For the degree, the estimate in \eqref{degreebduplow} is
    		\[
    		\deg Q_n(x,x^{M_{j_0}})\ \le\ 2p_n+2-n+\sum_{t=1}^{n}p_t .
    		\]
    		Instead of \eqref{sumpmbound}, $\sum_{t\le n}p_t<n^2(\log n+\log\log n)$, we use the
    		asymptotic consequence of the prime number theorem \cite[Theorem~1.4]{Axler}
    		\[
    		\sum_{t\le n}p_t=\Big(\tfrac12+o(1)\Big)n^{2}\log n ,
    		\]
    		together with $p_n=O(n\log n)$. With $n=(2+o(1))T$ this gives
    		\[
    		\deg Q_n(x,x^{M_{j_0}}) \leq \Big(\tfrac12+o(1)\Big)n^2\log n=\big(2+o(1)\big)T^{2}\log T .
    		\]
    		The minimal polynomial of $\tau_{M_{j_0}}$ divides $Q_n(x,x^{M_{j_0}})$, so its degree is at most this.

            Since $D(T)=\min\{\deg\tau:\ \tr(\tau)\le -T\}$, \eqref{upperbdDT} follows from above.
    	\end{proof}
    
       For the lower bound on $D(T)$, first note that, by Lemma~\ref{lem:dict}, $\alpha=\tau+\tau^{-1}+2$ is totally positive of degree $d=\tfrac12\deg\tau$ with $\tr(\alpha)=2d+\tr(\tau)$. If $\tr\alpha\ge\lambda d$ then $|\tr(\tau)|\le(2-\lambda)d$, i.e.
    		$\deg\tau=2d\ge \tfrac{2}{2-\lambda}|\tr(\tau)|$. From \cite[Corollary~1.9]{OSS24}, we take $\lambda=1.80203$ to obtain \begin{equation}\label{lowerbdDT}
    			    \deg\tau\;\ge\;\frac{2}{2-1.80203}\,|\tr(\tau)|\;>\;10.1025\,|\tr(\tau)| .
    			\end{equation} Now using this and noting that there are only finitely many Salem numbers $\tau$ with negative trace for which \eqref{lowerbdDT} does not hold, we consider $T_0$ to be an integer larger than the absolute values of the traces of these exceptions, then, for all $T \geq T_0$, there exists at least one Salem number $\tau$ such that $\tr(\tau) = -T$ and satisfies \eqref{lowerbdDT} (the existence follows from \cite[Theorem~1.1]{MS}). The above discussion and the definition of $D(T)$ then yield 
                \begin{equation}\label{ineq:linlower}
                    D(T) > 10.1025 T,
                \end{equation} giving an effective lower bound of $D(T)$. 
     
        In what follows, we conclude that for sufficiently large $T$, $D(T)$ is in fact at least $O(T^{2-\epsilon})$ for arbitrarily small $\epsilon$. 
    
        \begin{prop}\label{prop:main}
    Let $\tau$ be a Salem number of degree $2d$ with $\tr(\tau)=-T$, $T\ge1$. Then
    \[
    T^{2}\ \le\ 8\,d\log(2d)+11d,
    \]
    and consequently $D(T)\ \ge\ \dfrac{2T^{2}}{16\log(2T) + 11}$ for every $T\ge1$.
    \end{prop}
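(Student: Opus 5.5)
The plan is to use the discriminant of the minimal polynomial $P$ of $\tau$, which is a nonzero integer, together with an energy (positive-definiteness) estimate for the $n:=2d-2$ conjugates $z_1,\dots,z_n$ of $\tau$ on the unit circle. Put $\sigma_1:=\sum_{j=1}^n z_j$. Then $\tr(\tau)=\tau+\tau^{-1}+\sigma_1=-T$, so $\sigma_1=-T-(\tau+\tau^{-1})<-T$, hence $|\sigma_1|\ge T$. Since each $|z_j|=1$, we also get $\tau+\tau^{-1}=-T-\sigma_1\le n$, so $\tau\le 2d$. This is where the factor $\log(2d)$ will come from.

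\textbf{Step 1 (discriminant split).} From $|\disc P|\ge 1$ we get
\[
0\le \sum_{i\ne j}\log|z_i-z_j| + 2\sum_{j}\bigl(\log|\tau-z_j|+\log|\tau^{-1}-z_j|\bigr)+2\log(\tau-\tau^{-1}).
\]
The terms involving $\tau$ are bounded crudely by $|\tau-z_j|\le\tau+1\le 2d+1$, $|\tau^{-1}-z_j|\le 2$ and $\tau-\tau^{-1}\le 2d$. This gives a contribution of order $2n\log(2d)+O(n)$, i.e. $4d\log(2d)+O(d)$.

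\textbf{Step 2 (unit-circle energy).} For $0<r<1$ I use the regularised identity
\[
\sum_{i,j}\log|z_i-rz_j|^2=-2\sum_{m\ge1}\frac{r^m}{m}\Bigl|\sum_j z_j^m\Bigr|^2\le -2r|\sigma_1|^2 .
\]
It follows from $\log|1-w|^2=-2\re\sum_{m\ge1} w^m/m$ for $|w|<1$. All terms are nonpositive, so every $m\ge2$ can be dropped. To compare with the actual pairs I use the elementary inequality $|e^{i\psi}-1|\le \frac{2}{1+r}|e^{i\psi}-r|$, since the ratio is maximised at $\psi=\pi$. The diagonal terms contribute exactly $n\log(1-r)^2$. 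Together these give
\[
\sum_{i\ne j}\log|z_i-z_j|^2\le 2n^2\log\tfrac{2}{1+r}+2n\log\tfrac1{1-r}-2r|\sigma_1|^2 \le 2n^2(1-r)+2n\log\tfrac{1}{1-r}-2rT^2 .
\]
I then choose $1-r\approx 1/n$ (say $r=1-1/n$), so the right-hand side becomes $2n+2n\log n-2(1-1/n)T^2$.

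\textbf{Step 3 (combine).} Inserting Steps 1--2 yields $(1-1/n)T^2\le n\log n+2n\log(2d)+O(n)$ with $n=2d-2$, i.e. $T^2\le 8d\log(2d)+O(d)$. The constant $11$ should come from carefully tracking the $O(d)$ terms (the $2n$, the $2n\log 2$ from $|\tau^{-1}-z_j|\le2$, and the correction from $1-1/n$). This bookkeeping, including checking the small $d$ cases where the factor $1-1/n$ matters, is the step I expect to be fiddliest. If the constant does not come out, I would optimise $r$ rather than fixing $1-r=1/n$.

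\textbf{Consequence for $D(T)$.} Take a Salem number with $\tr\le -T$; its trace $-T'$ satisfies $T'\ge T$. If $d\ge T^2$, the bound $2d\ge 2T^2/(16\log(2T)+11)$ is trivial. Otherwise $2d\le 2T^2\le(2T)^2$, so $8\log(2d)\le16\log(2T)$. Then $T^2\le T'^2\le d(16\log(2T)+11)$, giving $\deg\tau=2d\ge 2T^2/(16\log(2T)+11)$.
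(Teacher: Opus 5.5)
Your proposal is correct and follows essentially the paper's route. Both arguments use $|\disc P_\tau|\ge 1$ to bound the logarithmic energy of the unit-circle conjugates, the positive-definite expansion $\sum_{m\ge1}\frac{r^m}{m}|q_m|^2\ge r|q|^2$ with $r=1-\frac1N$, and the same split into $d\ge T^2$ and $d<T^2$ for $D(T)$. The differences are cosmetic:
\begin{itemize}
\item Your comparison $|e^{i\psi}-1|\le\frac{2}{1+r}|e^{i\psi}-r|$ replaces the paper's $|1-ru|^2\ge r|1-u|^2$, and is slightly sharper since $\frac{2}{1+r}\le r^{-1/2}$.
\item You bound $\tau$ via $|\sigma_1|\le n$ rather than via $\alpha=\tau+\tau^{-1}+2$.
\item The small cases $d\le 3$ follow from the trivial bound $T<n-2$, instead of citing that negative trace forces $d\ge4$.
\end{itemize}
Even with your looser bookkeeping the constant $11$ comes out, e.g.\ $T^2<7.2\,d\log(2d)+7d$ for $d\ge4$.
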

    
    \begin{coro}\label{cor:nolinear}
    For every $\lambda>0$ there are only finitely many Salem numbers with
    $\tr(\tau)\le-\lambda\deg(\tau)$, that is \[D(T)/T\to\infty.\] 
    \end{coro}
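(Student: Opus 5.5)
The plan is to derive both assertions of Corollary~\ref{cor:nolinear} directly from Proposition~\ref{prop:main}, using Lemma~\ref{lem:dict} to turn a degree bound into a finiteness statement.

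\emph{Step 1: bound the degree.} Fix $\lambda>0$ and let $\tau$ be a Salem number of degree $2d$ with $\tr(\tau)=-T\le-\lambda\cdot 2d$. Then $T\ge 2\lambda d$. Inserting this into Proposition~\ref{prop:main} gives
\[
4\lambda^2d^2\le T^2\le 8d\log(2d)+11d .
\]
Dividing by $d$ yields $4\lambda^2 d\le 8\log(2d)+11$. The left side is linear in $d$ and the right side is logarithmic, so this fails for all $d$ larger than some $d_1(\lambda)$. Hence $d\le d_1(\lambda)$. The same inequality then gives $T^2\le 8d_1\log(2d_1)+11d_1$, so $T$ is bounded as well.

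\emph{Step 2: finiteness.} Bounded degree and bounded trace alone do not obviously give finitely many Salem numbers, so I pass to $\alpha=\tau+\tau^{-1}+2$ via Lemma~\ref{lem:dict}.
\begin{itemize}
\item By that lemma, $\alpha$ is a totally positive algebraic integer of degree $d\le d_1$, and $\alpha<\tr(\alpha)<2d\le 2d_1$.
\item Every other conjugate of $\alpha$ lies in $(0,4)$.
\item So all conjugates of $\alpha$ are bounded by $\max(4,2d_1)$, and the coefficients of its minimal polynomial are bounded in terms of $d_1$.
\item Hence only finitely many such $\alpha$ exist.
\item Each $\alpha$ determines $\tau$ as the larger root of $x^2-(\alpha-2)x+1$.
\end{itemize}
Therefore only finitely many such $\tau$ exist.

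\emph{Step 3: the limit.} The statement $D(T)/T\to\infty$ follows immediately from the lower bound in Proposition~\ref{prop:main}:
\[
\frac{D(T)}{T}\ge\frac{2T}{16\log(2T)+11}\longrightarrow\infty .
\]
Alternatively, it follows from Step 2: for any $\lambda$, all but finitely many $T$ have every Salem number with trace at most $-T$ of degree exceeding $T/\lambda$.

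The only real subtlety is Step 2. Fixing degree and trace is not enough by itself to force finiteness. The bounded house of $\alpha$, coming from $\alpha<\tr(\alpha)$ in Lemma~\ref{lem:dict}, is what makes the argument work.
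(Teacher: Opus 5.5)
Your proposal is correct and follows the paper's argument: substitute $T\ge 2\lambda d$ into Proposition~\ref{prop:main} to bound $d$, use the house bound from Lemma~\ref{lem:dict} to get finiteness, and read off $D(T)/T\to\infty$ from the explicit lower bound $D(T)\ge 2T^2/(16\log(2T)+11)$. The only cosmetic difference is that you bound the conjugates of $\alpha=\tau+\tau^{-1}+2$ and then recover $\tau$ from $\alpha$, whereas the paper bounds the conjugates of $\tau$ directly via $\tau<2d$.
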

    
    \begin{proof}[Proof of Corollary~\ref{cor:nolinear} using Proposition~\ref{prop:main}]
    If $\tr(\tau) = -T$, $2\lambda d\le T$ and $T^{2}\le8d\log(2d) +11d$ then $4\lambda^{2}d\le8\log(2d) +11$,
    which holds for only finitely many $d$. Since all the conjugates of a Salem number $\tau$ are bounded above by the degree of its minimal polynomial (by Lemma~\ref{lem:dict}) and the coefficients of its minimal polynomial are symmetric functions of the conjugates, the coefficients of the minimal polynomial are bounded when the degree is fixed. Therefore, for each fixed degree, there are finitely many Salem numbers $\tau$ of bounded trace. 
    
    Since $D(T) \geq \dfrac{2T^{2}}{16\log(2T) +11}$, we have $\dfrac{D(T)}{T} \rightarrow \infty$ as $T \rightarrow \infty$.
    \end{proof}
    
    \begin{rem}\label{rem:attribution}
    The qualitative statement of Corollary~\ref{cor:nolinear} is not new, as it is a special case of
    Pritsker's equidistribution theorem \cite[Corollary~2.6]{Pri11} applied to the minimal polynomial of $\alpha=\tau+\tau^{-1}+2$ with $E=[0,4]$.
    In~\cite[Corollaries~3.5 and~3.6]{Pri11}, Pritsker shows that if all roots of $P_n$ lie in a bounded interval of capacity $1$ (for example $[0,4]$ or $[-2,2]$) and the leading coefficients are bounded, then the arithmetic mean of the roots differs from the midpoint of the interval by $O(\sqrt{\log n/n})$; our setting differs in that one root, $\alpha=\tau+\tau^{-1}+2$, lies outside $[0,4]$.
    \end{rem}
    
    Next, we have a lemma that is essential to prove Proposition~\ref{prop:main}.
    
    \begin{lem}\label{lem:key}
    Let $w_{1},\dots,w_{N}$ be distinct points on the unit circle, $N\ge2$, and denote
    $q:=\sum_{k=1}^{N}w_{k}$. Then, for every $r\in(0,1)$,
    \[
    r\,|q|^{2}\ \le\ -\sum_{1\le k\ne l\le N}\log|w_{k}-w_{l}|
    \;+\;\frac{N(N-1)}{2}\log\frac1r\;+\;N\log\frac{1}{1-r}\,.
    \]
    \end{lem}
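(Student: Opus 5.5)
The plan is to compare both sides through the Fourier expansion of the logarithmic kernel on the unit circle, smoothed by the parameter $r\in(0,1)$. For $|u|=1$ and $0<r<1$ we have the absolutely convergent expansion
\[
-\log|1-ru| \;=\; \re\sum_{m\ge1}\frac{r^m u^m}{m}.
\]
I will apply it with $u=u_{kl}:=w_k\overline{w_l}$ and sum over all ordered pairs $(k,l)$, including $k=l$.

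\emph{Step 1 (positivity).} Summing the expansion over all $1\le k,l\le N$ and using $\sum_{k,l}(w_k\overline{w_l})^m=\bigl|\sum_k w_k^m\bigr|^2$ gives
\[
\sum_{k,l=1}^{N}-\log|1-r\,w_k\overline{w_l}| \;=\; \sum_{m\ge1}\frac{r^m}{m}\Bigl|\sum_{k=1}^N w_k^m\Bigr|^2 \;\ge\; r|q|^2,
\]
because every term is nonnegative; I keep only $m=1$.

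\emph{Step 2 (diagonal terms).} When $k=l$ we have $u=1$, so each diagonal term is $-\log(1-r)$. These $N$ terms together contribute exactly $N\log\frac1{1-r}$.

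\emph{Step 3 (off-diagonal terms).} This is the only real point. I need to compare $|1-ru|$ with $|1-u|$ for $|u|=1$, and the elementary identity is
\[
|1-ru|^2-r|1-u|^2=(1-2r\re u+r^2)-r(2-2\re u)=(1-r)^2\ge0 .
\]
Hence $-\log|1-ru|\le -\log|1-u|+\tfrac12\log\frac1r$. Since $|1-w_k\overline{w_l}|=|w_k-w_l|$, which is nonzero because the points are distinct, summing over the $N(N-1)$ ordered pairs with $k\ne l$ gives
\[
\sum_{k\ne l}-\log|1-r\,w_k\overline{w_l}|\;\le\; -\sum_{k\ne l}\log|w_k-w_l|+\frac{N(N-1)}{2}\log\frac1r .
\]

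\emph{Step 4 (combine).} Substituting Steps 2 and 3 into the left-hand side of Step 1 yields exactly the claimed inequality.

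The only potential obstacle is finding the right comparison in Step 3. Once the identity with remainder $(1-r)^2$ is in hand, everything else is routine. The expansion is legitimate for every $u$ on the circle since $r<1$, and the hypothesis $N\ge2$ plays no role beyond making the off-diagonal sum nonempty.
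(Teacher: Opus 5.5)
Your proposal is correct and follows essentially the same route as the paper's proof. In both, the expansion $-\log|1-ru|=\sum_{m\ge1}\frac{r^m}{m}\re(u^m)$ is summed over all ordered pairs to get $\sum_m \frac{r^m}{m}|q_m|^2\ge r|q|^2$. The diagonal terms are then evaluated exactly, and the off-diagonal terms are bounded using $|1-ru|^2=(1-r)^2+r|1-u|^2\ge r|w_k-w_l|^2$.
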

    
    \begin{proof}
    Let $q_{m}$ denote $\sum_{k}w_{k}^{m}$ for $m \in \N$. For $u \in \C$ with $|u|=1$ and $0<r<1$, one has
    \[-\log|1-ru|=-\frac{1}{2}\log|1-ru|^2= - \frac{1}{2}\left(\log(1-ru) + \log(1-r\overline{u})\right) = \sum_{m\ge1}\frac{r^{m}}{m}\operatorname{Re}(u^{m}),\] an absolutely convergent series.
    Summing over all $N^{2}$ ordered pairs $(k, \ell)$ and interchanging the sums (which is possible since
    $N^{2}\sum_{m \geq 1} \frac{r^{m}}{m}<\infty$), we have
    \begin{equation}\label{eq:psd}
    \sum_{k,\ell=1}^{N}\Bigl(-\log\bigl|1-r\,w_{k}\overline{w_{l}}\bigr|\Bigr)
    =\sum_{m\ge1}\frac{r^{m}}{m}\sum_{k,\ell = 1}^N\re\bigl(w_{k}^{m}\overline{w_{\ell}^{m}}\bigr)
    =\sum_{m\ge1}\frac{r^{m}}{m}\,|q_{m}|^{2}\ \ge\ r\,|q|^{2},
    \end{equation}
    where the last equality follows from the fact that \[|q_m|^2 = q_m\overline{q_m} = \re(q_m\overline{q_m}) = \re\left(\sum_{k, \ell = 1}^Nw_k^m \overline{w_\ell^m}\right) = \sum_{k, \ell = 1}^N\re(w_k^m \overline{w_\ell^m}),\] and, since all terms of the last series are non-negative, the last inequality holds by just considering the term with $m = 1$.
    
    We now bound the left side of~\eqref{eq:psd} from above. For each $k=\ell$ the term is
    $-\log |1-r| = -\log(1-r)$ (as $0 < r < 1$), giving $N\log\frac{1}{1-r}$ in total. For $k\neq \ell$, writing $u=w_{k}\overline{w_{\ell}}$ then yields
    \[
    |1-ru|^{2}=(1-r)^{2}+2r\left(1-\operatorname{Re}u\right) \geq 2r\left(1-\operatorname{Re}u\right) = r\,|1-u|^{2}=r\,|w_{k}-w_{\ell}|^{2},
    \]
    where we use $|w_{\ell}|=1$ in the last equality. Hence, taking the logarithm on both sides, we obtain
    $-\log|1-ru|\le\frac12\log\frac1r-\log|w_{k}-w_{\ell}|$, and summing over the $N(N-1)$ ordered pairs $k\ne \ell$ proves the claim.
    \end{proof}
    
    Now we are ready to prove our proposition. 
    
    \begin{proof}[Proof of Proposition~\ref{prop:main}]
        For a Salem number $\tau$ of degree $2d$ with $\tr(\tau)=-T$ (where $T \in \N$), by \cite{S-1}, a
    Salem number of negative trace has degree at least $8$, so $d\geq 4$. Let $P_{\tau}\in\Z[x]$ be its minimal polynomial, with roots
    \[
    \tau,\ \tau^{-1},\ w_{1},\dots,w_{N},\quad N:=2d-2 \geq 6,
    \]
    all distinct with $|w_k| = 1$ for all $1 \leq k \leq N$. Note that $\overline{w_k} = w_k^{-1} \in \{w_1, \dots, w_N\}$.  Write $q:=\sum_{k}w_{k}$, then the above implies that $q$ is a real
    number. Moreover \begin{equation}\label{lowerbdp}
        -T = \tr(\tau) = \tau + \tau^{-1} + q \Longrightarrow |q| = T + \tau + \tau^{-1} > T. 
    \end{equation}
    
    From Lemma~\ref{lem:dict}, we have \begin{equation}\label{eq:tausize}
        \tau < \tau + \tau^{-1} + 2 \leq \tr(\tau + \tau^{-1}+2) \leq 2d.
    \end{equation}
    
    As $P_{\tau}$ is irreducible, $\disc(P_{\tau})$, defined as \[\disc (P_{\tau}) = (\tau - \tau^{-1})^2\prod_{k = 1}^N(\tau - w_k)^2\prod_{k = 1}^N(\tau^{-1} - w_k)^2 \prod_{1 \leq k < \ell \leq N} (w_k - w_\ell)^2\] is a non-zero rational
    integer, so $|\disc(P_{\tau})|\geq 1$ and
    \[0 \leq \log|\disc(P_{\tau})|= 2\log|\tau-\tau^{-1}| + 2\sum_{k = 1}^N\log|\tau-w_{k}|+2\sum_{k = 1}^N\log|\tau^{-1}-w_{k}|
    + 2\sum_{1 \leq k < \ell \leq N}\log|w_{k}-w_{\ell}|.\]
    Since $|\tau-w_{k}|\le\tau+1<2\tau$,
    $|\tau^{-1}-w_{k}|\le1+\tau^{-1}<2$ and $|\tau-\tau^{-1}|<\tau$, we have
    \begin{equation}\label{eq:budget}
    -\sum_{\substack{1 \leq k, \ell \leq N \\ k \neq \ell}}\log|w_{k}-w_{\ell}| = -2\sum_{1 \leq k < \ell \leq N}\log|w_{k}-w_{\ell}| \le\ 2N\log(4\tau) +2\log\tau\ \le\ 2N\log(8d) +2\log(2d),
    \end{equation}
    where the last inequality follows from \eqref{eq:tausize}.
    
    Now using \eqref{eq:budget} \[\frac{N(N-1)}{2}\log\frac{N}{N-1} = \frac{N(N-1)}{2}\log \left(1 + \frac{1}{N-1}\right) \leq \frac{N(N-1)}{2}\cdot\frac{1}{N-1} = \frac{N}{2},\]  an application of Lemma~\ref{lem:key} to $w_{1},\dots,w_{N}$ with $r=1-\frac1N$ yields
    \[r\,|q|^{2}\ \le\ 2N\log(8d) +2\log(2d) +\frac N2+N\log N.\]
    Since $N=2d-2<2d$, we have \[N\log N<2d\log(2d), \quad 2N\log(8d) <4d\log(2d) +4d\log 4, \quad
    \frac N2<d, \; \text{and} \; 2\log(2d) \leq 2d.\] Hence
    \[
    r\,|q|^{2}\ <\ 6d\log(2d) +\bigl(4\log4+3\bigr)d\ <\ 6d\log(2d) +9d .
    \]
    Finally $\frac{1}{r}=\frac{N}{N-1}\leq \frac{6}{5}$ as $N\geq 6$, and this implies that
    \begin{equation}\label{ultbd}
        T^{2}<|q|^{2}\ <\ \tfrac65\bigl(6d\log(2d) +9d\bigr)\ =\ 7.2\,d\log(2d)+10.8\,d
    \ \le\ 8d\log(2d) +11d,
    \end{equation}
    where the first inequality follows from \eqref{lowerbdp}.
    
    For every Salem number $\tau$ of degree $2d \geq 8$ and $\tr(\tau)= -T'$ with some integer $T' \geq T$, the above discussion implies that
    \[T^{2} \leq T'^2 \leq 8d\log(2d) +11d.\] If $d \le T^{2}$ then $\log(2d)\leq \log(2T^{2})\leq 2\log(2T)$, and therefore, \eqref{ultbd} becomes 
    $T^{2}\le d(16\log(2T) +11)$, which implies that \[2d\geq \frac{2T^{2}}{(16\log(2T) +11)}.\] Since $D(T) = \min\{\deg\tau:\ \tr(\tau)\le -T\}$ and $T^2 > \frac{2T^{2}}{16\log(2T) +11}$, combining the above with the case when $d > T^2$, we derive that the minimum of all such $\deg \tau$ with $\tr(\tau) \leq -T$ is at least $\frac{2T^{2}}{16\log(2T) +11}$, that is \[D(T) \geq \frac{2T^{2}}{16\log(2T) + 11},\] which completes the proof.
    \end{proof}
    
     \begin{rem}\label{rem:sharp}
    Neither constant is optimal. Optimising $r$ and replacing the crude bounds $|\tau-w_{k}|<2\tau$ and $\tau<2d$ gives $T\le(1+o(1))\sqrt{6\,d\log d}$, hence $D(T)\ge(\tfrac13+o(1))\,T^{2}/\log T$. Numerically, Proposition~\ref{prop:main} improves on the linear bound~\eqref{ineq:linlower} only for $T \geq 700$. Lemma~\ref{lem:key} is sharp up to $O(N)$. Taking $q=0$ and $r=1-\tfrac1N$ gives $\sum_{k\ne l}\log|w_k-w_l|\le N\log N+\tfrac N2$, whereas the true maximum $N\log N$ is attained by the $N$-th roots of unity. The use of $\disc\neq 0$ in this style goes back to Schur~\cite{Sch18} and Siegel~\cite{Siegel}, and appears in the potential-energy form in Cherubini--Yatsyna~\cite{CY22} and Dubickas~\cite{Dub23}.
    \end{rem}
    
    \subsection{Proof of Theorem~\ref{thm:uplowbd}}
    	We now have everything needed to prove Theorem~\ref{thm:uplowbd}.
    
        \begin{proof}[Proof of Theorem~\ref{thm:uplowbd}]
            Collecting the results of Proposition~\ref{prop:upper} and Proposition~\ref{prop:main}, we conclude that, for sufficiently large $T$, \[\frac{2T^2}{16\log (2T) + 11} \leq D(T) \leq 2(1 + o(1))T^2\log T,\] which completes the proof.
        \end{proof}
    
    \medskip
    
    \section{Salem numbers with a prescribed trace of sufficiently large degrees}\label{theorem103}
    
    In this section, we prove Theorem~\ref{salemlargedegree}, which states that, for an odd positive integer $n$, there are Salem numbers of degree $2d$ and trace $-\lfloor\frac{n}{2}\rfloor$ for all sufficiently large $d$. 
    
    Before proceeding with the proof, we briefly recall some notations and framework from~\cite{MS} that we need. In order to obtain Salem numbers of trace $-\lfloor\frac{n}{2}\rfloor$, McKee and Smyth consider the following pairs of circular interlacing polynomials: $A(z) = z-1$ and $B(z) = z+1$. The discussion preceding~\cite[Proposition 3.2]{MS} ensures that the pair $A(z^m)$ and $B(z^m)$ also satisfy circular interlacing, for all $m \geq 1$. From Section~\ref{preinter}, we conclude that any finite sum of fractions $\frac{B(z^m)}{A(z^m)}$ (for different values of $m$) produces a pair of circular interlacing polynomials. In this regard, they consider the following polynomial in $n+2$ variables~\cite[Section 5]{MS}, where $(x, x_0, x_1, \dots, x_n) \in \mathbb{G}_m^{n+2}$ with $\mathbb{G}_m$ denoting the multiplicative group of $\C$, \begin{equation}\label{MSpoly}
        h(x, x_0, x_1, \dots, x_n) = 2(x^2 -1)\prod_{i=0}^n(x_i - 1) - x\sum_{j=0}^n(x_j+1)\prod_{\substack{i= 0 \\ i \neq j}}^n (x_i - 1),
    \end{equation} and, as mentioned in their article, the reason for looking at this polynomial is to be able to apply the identity \[\frac{h(x, x_0, x_1, \dots, x_n)}{2x\prod_{i=0}^n (x_i - 1)} = \frac{x^2 - 1}{x} - \frac{1}{2}\sum_{i=0}^n \frac{x_i+1}{x_i-1},\] which utilises the circular interlacing condition mentioned above when replacing $x_i$ with suitable powers of $x$. Following their argument in~\cite[Lemma 5.1, Lemma 5.2, Lemma 6.1]{MS}, we have positive integers $\ell_0, \dots, \ell_n$ such that \[h(x, x^{\ell_0}, \dots, x^{\ell_n}) = 2(x-1)^n P_{\tau, n}(x) \in \Z[x],\] where $P_{\tau, n}(x)$ is monic irreducible and a minimal polynomial of a Salem number $\tau_n$ with trace $-\lfloor\frac{n}{2}\rfloor$. In particular, their choice of $\ell_i$ are \begin{equation}\label{killerexponent}
        \ell_0 = \left(\prod_{\substack{p \leq 3\cdot 2^{n+1} \\ p \, \text{prime}}} p\right)\lcm\left(1, 2, \dots, \left\lceil(n+5)^{n+2}(n+2)^{\frac{n+2}{2}}\right\rceil\right),
    \end{equation} and $\ell_1, \dots, \ell_n$ are the smallest $n$ primes coprime to $\ell_0$ (the \textit{killer exponent}). In fact, the proof of Lemma 6.1 in~\cite{MS}, suggests that, for $i =1, \dots, n$, any (distinct) choice of $\ell_i$ which is a prime and $\gcd(\ell_i, \ell_0) = 1$ will produce a Salem number of the above prescribed trace, and the cyclotomic factor of the corresponding $h(x, x^{\ell_0}, \dots, x^{\ell_n})$ is $(x-1)^{n}$. For an odd positive integer $n$ and a given choice of $(\ell_0, \ell_1, \dots, \ell_n)$, the degree of the corresponding minimal polynomial of the Salem number is \[\deg \frac{h(x, x^{\ell_0}, \dots, x^{\ell_n})}{2(x-1)^n} = \deg P_{\tau, n}(x) = 2 + \sum_{i=0}^n \ell_i - n,\] an even integer. This brings us to our following proposition.
    
    \begin{prop}\label{alllarged}
        Let $\ell_0$ be as given in \eqref{killerexponent}. Let $K_n:= 2^n\ell_0$. Then, for $n \geq 10^4$, every odd integer greater than $2K_n^2$ can be represented as the sum of $n$ distinct primes which are coprime to $\ell_0$.
    \end{prop}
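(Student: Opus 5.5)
Write the target odd integer $N > 2K_n^2$ as a sum of $n-3$ small, fixed primes plus a sum of three large primes, the latter supplied by Vinogradov's three-primes theorem. Every prime $p > 3\cdot 2^{n+1}$ is automatically coprime to the primorial factor of $\ell_0$. Such a $p$ is coprime to the lcm factor as soon as it exceeds $\lceil (n+5)^{n+2}(n+2)^{(n+2)/2}\rceil$. Hence every prime larger than some explicit bound $B_n$ (the larger of the two thresholds, certainly $B_n < \ell_0 \le K_n$) is coprime to $\ell_0$. So it suffices to use only primes exceeding $B_n$.

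\textbf{Step 1.} Fix $n-3$ distinct primes $q_1 < \dots < q_{n-3}$, all exceeding $B_n$. Taking the first $n-3$ of them, they can be chosen with $\Sigma := \sum q_i < n\cdot 2B_n < K_n$; this uses Bertrand's postulate or \eqref{pmbound} to see that there are at least $n$ primes in $(B_n, 2B_n]$ once $B_n$ is huge. Since $n-3$ is even, $\Sigma$ is even, so $N - \Sigma$ is odd and exceeds $2K_n^2 - K_n$.

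\textbf{Step 2.} Represent $N' := N - \Sigma$ as $r_1 + r_2 + r_3$ with primes $r_i$ that exceed $2B_n$ (so they are distinct from the $q_i$ and coprime to $\ell_0$) and are pairwise distinct. I would do this with a version of Vinogradov's theorem with a lower bound on the number of representations, $R(N') \gg N'^2/(\log N')^3$, valid for $N'$ beyond an explicit threshold; $N' > K_n^2$ is astronomically large, so that threshold is met. Then subtract two bad counts. Representations with some $r_i \le 2B_n$ number at most $O(B_n\cdot N'/\log N')$, because fixing one small prime leaves a binary problem with at most $N'$ choices. Representations with $r_i = r_j$ number at most $O(N')$. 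Both are $o(N'^2/(\log N')^3)$ since $B_n \le K_n \le \sqrt{N'}$. This is exactly where the hypothesis $N > 2K_n^2$ gets used, playing the role of the paper's Lemmas~\ref{smallprimes} and~\ref{sameprimes}.

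\textbf{Main obstacle.} The delicate point is to make the Vinogradov lower bound effective and uniform, so that the constant $2K_n^2$ really suffices. One could cite an explicit ternary Goldbach result (Helfgott) giving existence for all odd numbers $> 5$, but existence alone does not control small or repeated primes. So I would instead use the asymptotic $R(N) = \mathfrak{S}(N)\frac{N^2}{2(\log N)^3}(1+o(1))$ with $\mathfrak{S}(N) \ge c > 0$ for odd $N$. I would then argue that, because $N' > K_n^2$ with $K_n$ doubly exponential in $n$, the main term dominates the error terms above. If an explicit error term is unavailable, the fallback is to allow the threshold to be ineffective, noting that the statement only asserts a bound of this size. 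With the $n$ primes $q_1,\dots,q_{n-3},r_1,r_2,r_3$ distinct and coprime to $\ell_0$, the proposition follows.
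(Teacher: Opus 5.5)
Your proposal is correct and follows the paper's proof essentially step for step. You fix $n-3$ primes beyond every possible prime factor of $\ell_0$; the paper simply takes the first $n-3$ primes above $\ell_0$ and uses Bertrand to get $\sum \ell_i < K_n$. Then you obtain three distinct large primes from a quantitative Vinogradov lower bound, after discarding the representations with a small prime (Lemma~\ref{smallprimes}) or a repeated prime (Lemma~\ref{sameprimes}).

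The effectivity issue you flag is settled in the paper by citing the explicit bound $r(M) > M^2/(4(\log M)^3)$ for all odd $M \ge e^{3100}$ (Liu--Wang combined with Vinogradov). This applies because $K_n^2 \ge 4^{10^4} > e^{3100}$. Your ineffective fallback would not suffice, since the statement asserts the explicit threshold $2K_n^2$ for every $n \ge 10^4$, not merely for all sufficiently large $n$.
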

    
    Before giving a proof to Proposition~\ref{alllarged}, we show how to derive our theorem with its help. 
    
    \begin{proof}[Proof of Theorem~\ref{salemlargedegree} using Proposition~\ref{alllarged}]
        From Proposition~\ref{alllarged}, we have, for every $k \geq K_n^2$, there exist distinct primes $\ell_i$ (for $i =1, \dots, n$) which are coprime to $\ell_0$ such that \[\ell_1 + \cdots + \ell_n = 2k + 1.\] Since $\ell_0$ is even by definition in \eqref{killerexponent} and $n$ is odd, adding $\ell_0 + 2 - n$ both sides yields \[2 + \sum_{i=0}^n \ell_i - n = 2k + 1 + \ell_0 + 2 - n = 2d,\] where \begin{equation*}
            d \geq \frac{2K_n^2 + \ell_0 + 3 - n}{2} = \frac{1}{2}\left((2^{2n+1}\ell_0 + 1)\ell_0 + 3 - n\right).
        \end{equation*} Noting that the sum on the LHS is the degree of a minimal polynomial of a Salem number of trace $-\lfloor\frac{n}{2}\rfloor$, we conclude that for suitable choices of $(\ell_1, \dots, \ell_n)$, we obtain Salem numbers of degree $2d$ with trace $-\lfloor\frac{n}{2}\rfloor$ for all $n \geq 10^4$ and all $d \geq d_0(n)$, where \begin{equation}\label{don}
            d_0(n) := \frac{1}{2}\left((2^{2n+1}\ell_0 + 1)\ell_0 + 3 - n\right),
        \end{equation} and this completes the proof.  
    \end{proof}
    
    It only remains to deduce Proposition~\ref{alllarged}. 
    
    \begin{proof}[Proof of Proposition~\ref{alllarged}]
        Since $\ell_0$ is even, we consider first $n-3$ primes larger than $\ell_0$, and denote them by $\ell_1 < \cdots <\ell_{n-3}$. From Bertrand's postulate~\cite[Corollary 3, eq.~(3.9)]{RS}, we have $\ell_i \leq 2^i \ell_0$ for $i = 1, \dots, n-3$. Moreover, \[\sum_{i=1}^{n-3} \ell_i \leq \ell_0\sum_{i=1}^{n-3} 2^i = (2^{n-2} - 2)\ell_0 < K_n < K_n^2.\] Therefore, it only remains to show that, for every $k \geq K_n^2$, there are distinct primes $\ell_{n-2}$, $\ell_{n-1}$ and $\ell_n$, which are coprime to $\ell_0$ and bigger than $\ell_{n-3}$, such that \[\ell_{n-2} + \ell_{n-1} + \ell_n = 2\left(k -\frac{\sum_{i=1}^{n-3} \ell_i}{2}\right) + 1.\] Using the bound of $k$, we note that \[2\left(k -\frac{\sum_{i=1}^{n-3} \ell_i}{2}\right) + 1 > 2\left(k -(2^{n-3} - 1)\frac{\ell_0}{2}\right) + 1 > K_n^2.\]
        
        On the other hand, due to Vinogradov~\cite{Vino} (see~\cite[p. 146]{Daven} and~\cite{LLM} for our formulation), we have that all sufficiently large odd numbers can be represented as a sum of three primes. Combining the result in~\cite[Theorem~1]{LW} and Vinogradov's theorem, we obtain that, for every odd integer $M \geq e^{3100}$, we have \begin{equation}\label{repn3primes}
            r(M) \geq \frac{1}{4}\mathcal{G}(M)\frac{M^2}{(\log M)^3} > \frac{M^2}{4(\log M)^3},
        \end{equation} where $1 < \mathcal{G}(M) < 3$, and $r(M)$ is the number of ways any odd integer $M$ can be written as a sum of three primes.\footnote{In 2015, in a series of papers, Helfgott~\cite{Helf1} showed that we can take $M \geq 10^{27}$.} In the next three lemmas (Lemma~\ref{sameprimes}, Lemma~\ref{smallprimes} and Lemma~\ref{thebound}) we show that, for $n \geq 10^4$, the number of representations of an odd integer $M$ larger than $K_n^2$ as a sum of three primes, where either at least one of the primes is less than $\ell_{n-3}$ or at least two of them are equal, is strictly smaller than $\frac{M^2}{4(\log M)^3} - 1$. This ensures that there exist three distinct primes larger than $\ell_{n-3}$ such that $M$ is the sum of those three primes, which completes our proof.
    \end{proof}
    
    Next, we prove our claimed results in the argument above using the following lemmas.
    
    \begin{lem}\label{sameprimes}
        For an odd integer $M$, the number of representations of $M$ as a sum of three primes where at least two of them are equal is bounded above by $3\pi\left(\frac{M}{2}\right)$, where $\pi(\alpha)$ denotes the number of primes up to $\alpha \in \R_{> 0}$.
    \end{lem}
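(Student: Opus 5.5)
We count representations as ordered triples $(q_1,q_2,q_3)$ of primes with $q_1+q_2+q_3=M$, matching the way $r(M)$ in \eqref{repn3primes} is counted. We split the representations with a repeated prime according to which pair of positions coincides. There are three such pairs, $\{1,2\}$, $\{1,3\}$ and $\{2,3\}$, and every representation with at least two equal primes has $q_i=q_j$ for at least one of them. By a union bound, the count is at most $3$ times the maximum over the three pairs of the number of triples with $q_i=q_j$.

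Next we bound the count for a single pair, say $q_1=q_2=p$. Then $q_3=M-2p$ is determined by $p$, so the number of such triples is at most the number of primes $p$ with $M-2p$ a prime. Since $q_3\ge 2$, we need $2p\le M-2<M$, that is $p<M/2$. Hence there are at most $\pi\left(\frac{M}{2}\right)$ admissible choices of $p$, and the same bound holds for the other two pairs by symmetry.

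Multiplying by $3$ gives the bound $3\pi\left(\frac{M}{2}\right)$. If instead the representations are counted as unordered multisets, the bound only improves, since each multiset $\{p,p,M-2p\}$ is determined by $p$ alone. The only point needing care is this consistency with how $r(M)$ is counted, and the ordered count above already covers the worse case. The case $q_1=q_2=q_3$ is counted several times, which is harmless for an upper bound. No step is a real obstacle; the argument is a direct counting estimate.
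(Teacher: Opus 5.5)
Your proof is correct and follows the paper's argument: fix which pair of positions coincides, observe that the repeated prime $p<M/2$ determines the third prime, so each pair contributes at most $\pi\left(\frac{M}{2}\right)$ representations, and the three symmetric cases give $3\pi\left(\frac{M}{2}\right)$. Your remark on ordered versus unordered counting spells out a consistency check with $r(M)$ that the paper leaves implicit.
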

    
    \begin{proof}
        We want to count the number of prime tuples $(q_1, q_2, q_3)$ such that at least two of them are equal, and $q_1 + q_2 +q_3 = M$. Without loss of generality, assuming $q_1 = q_2$, we have $2q_1 + q_3 = M$. Noting that for a given $M$ and a given choice of $q_1$, the choice of $q_3$ is already determined. Therefore, the number of such representations is determined by the choice of $q_1$. Since $2q_1 < M$, the number of choices for $q_1$ (for a given odd integer $M$) is at most $\pi\left(\frac{M}{2}\right)$, and because of the symmetry in our choice of $q_1$, $ q_2$ and $q_3$, we obtain our result. 
    \end{proof}
    
    \begin{lem}\label{smallprimes}
        For an odd integer $M> 2 \cdot 10^{24}$, the number of ways $M$ can be written as a sum of three primes with at least one prime less than $K_n$ is bounded above by $6K_n\frac{M}{\log M}$.
    \end{lem}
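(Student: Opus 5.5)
We count ordered triples of primes $(q_1,q_2,q_3)$ with $q_1+q_2+q_3=M$ in which some $q_i<K_n$. By symmetry we first count triples with $q_1<K_n$ and then multiply by $3$. This gives the union bound over the three positions.

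\textbf{The inner count.} Fix a prime $q_1<K_n$; there are at most $K_n$ such choices. Then $q_2+q_3=M-q_1=:N$, where $N<M$. Once $q_2$ is chosen, $q_3=N-q_2$ is determined, so the number of pairs $(q_2,q_3)$ is at most the number of primes $q_2\le N$. That is at most $\pi(M)$. If $q_1=2$, then $N$ is odd and one of $q_2,q_3$ must equal $2$, leaving at most $2$ pairs. Otherwise $q_1$ is odd, $N$ is even, and we simply use the bound $\pi(M)$.

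\textbf{Bounding $\pi(M)$.} We use the Rosser--Schoenfeld bound $\pi(x)<1.25506\,\frac{x}{\log x}$, valid for $x>1$; this is \cite[Corollary 1]{RS}, the same source already used for \eqref{pmbound}. The elementary bound $\pi(x)<2x/\log x$ would also do. Either way, each fixed $q_1$ contributes at most $2\frac{M}{\log M}$ triples.

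\textbf{Combining.} Summing over the at most $K_n$ choices of $q_1$ and multiplying by $3$ for the position of the small prime gives
\[
3\cdot K_n\cdot 2\frac{M}{\log M}=6K_n\frac{M}{\log M}.
\]

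The hypothesis $M>2\cdot 10^{24}$ is needed only to ensure $M$ is large enough for the chosen explicit $\pi$-bound to apply, and so that $\log M$ is positive and large. It plays no deeper role. Likewise the condition ``less than $K_n$'', rather than ``less than $\ell_{n-3}$'', is harmless, since $\ell_{n-3}<K_n$.

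The main point to be careful about is only the explicit constant in the prime-counting bound, and we keep generous slack by using the factor $2$. There is no genuine obstacle; the lemma is a crude union bound.
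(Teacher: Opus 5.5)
Your argument is correct. It has the same skeleton as the paper's proof: fix the small prime $q_1$, bound the number of pairs with $q_2+q_3=M-q_1$ by $2M/\log M$, sum over at most $K_n$ values of $q_1$, and multiply by $3$. The difference is the key input, where yours is more elementary.

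The paper counts only the pairs with $q_2\ge q_3$. It bounds them by the explicit estimate $0.961\,\frac{(M-q_1)/2}{\log((M-q_1)/2)}$ from \cite[Lemma 3]{DGNP}, relaxes this to $M/\log M$ using that $X/\log X$ increases on $(e,\infty)$, and then doubles to account for the order of $q_2,q_3$. You instead bound the ordered pairs directly by $\pi(M-q_1)\le\pi(M)$ and apply $\pi(x)<1.25506\,x/\log x$ from \cite[Corollary 1]{RS}, which the paper itself uses in Lemma~\ref{thebound}.

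What your route buys:
\begin{itemize}
\item It avoids the external Goldbach-type lemma.
\item It needs no condition on $M-q_1$. The paper's chain relies on the monotonicity of $X/\log X$ at $X=(M-q_1)/2$ and on the range of validity of the cited estimate; this is automatic in the application only because there $M>K_n^2$.
\item With your argument the hypothesis $M>2\cdot 10^{24}$ can be dropped, and any odd $M>1$ will do.
\end{itemize}

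The paper's pair count could in principle give a better constant, but that gain is discarded in the final estimate, so your simplification loses nothing. Your remark about $q_1=2$ is correct but unnecessary.
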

    
    \begin{proof}
        If there exist three primes $q_1$, $q_2$, and $q_3$ such that $q_1 \leq K_n$ and $M = q_1 + q_2 + q_3$, then the number of choices of the tuple of primes $(q_2, q_3)$ such that $q_2 \geq q_3$ and $q_2 + q_3 = M - q_1$ is bounded above by, from~\cite[Lemma 3]{DGNP}, \[0.961 \frac{\frac{M-q_1}{2}}{\log \left(\frac{M-q_1}{2}\right)} < 0.961\frac{M-q_1}{\log (M-q_1)} < \frac{M}{\log M},\] where the inequalities follow from the fact that the function $\frac{X}{\log X}$ is an increasing function on the interval $(e, \infty)$. Therefore, using the symmetry of the choices $q_2$ and $q_3$, for a given $q_1$ and $M$, the number of choices of the tuple of primes $(q_2, q_3)$ is at most $2\frac{M}{\log M}$. Summing over all possible primes $q_1 \leq K_n$, and considering the $3$ symmetric permutations (i.e. instead of fixing $q_1$ we fix $q_2$ or $q_3$ and apply the same computation), we get that the number of ways $M$ can be written as a sum of three primes with at least one prime less than $K_n$ is at most \[3\sum_{\substack{q_1 \leq K_n \\ q_1 \, \text{prime}}} 2\frac{M}{\log M} < 6K_n\frac{M}{\log M},\] which completes the proof.
    \end{proof}
    
    Combining the above lemmas with \eqref{repn3primes}, it is enough to show that, for sufficiently large $M$, there exists at least one tuple of primes $(q_1, q_2, q_3)$ such that they are distinct and larger than $K_n$  (which is by definition $> \ell_{n-3}$) and $M = q_1 + q_2 + q_3$. We prove this in our next lemma. 
    
    \begin{lem}\label{thebound}
        For $n \geq 10^4$ and for every odd integer $M> K_n^2$, where $K_n$ is as defined in Proposition~\ref{alllarged}, we have \[\frac{M^2}{4(\log M)^3} > 1 + 6K_n\frac{M}{\log M} + 3\pi\left(\frac{M}{2}\right).\]
    \end{lem}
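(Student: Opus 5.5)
We will show that each of the three terms on the right-hand side is at most a third of $\frac{M^2}{4(\log M)^3}$; the only real work is in the middle term. Throughout, put $X := \frac{M^2}{4(\log M)^3}$. We will use two facts repeatedly: $M > K_n^2$ forces $K_n < \sqrt{M}$, and $K_n$ is astronomically large (at least $2^{10^4}$ times the primorial of all primes up to $3\cdot 2^{10^4+1}$). In particular $M > 2\cdot 10^{24}$ and $M \geq e^{3100}$, so \eqref{repn3primes} and Lemma~\ref{smallprimes} apply. We first reduce the claim to three separate inequalities,
\[
1 < \tfrac{1}{3}X,\qquad 6K_n\frac{M}{\log M} < \tfrac13 X,\qquad 3\pi\!\left(\tfrac M2\right) < \tfrac13 X .
\]
Adding these gives the lemma (in fact with room to spare).

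\textbf{Trivial term.} The inequality $1 < X/3$ is immediate, since $M \geq e^{3100}$ makes $M^2/(\log M)^3$ enormous.

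\textbf{Prime-counting term.} We use the Rosser--Schoenfeld bound $\pi(y) < 1.26\, y/\log y$ for $y>1$. This gives
\[
3\pi(M/2) < 3.78\,\frac{M/2}{\log(M/2)} < \frac{2M}{\log M}
\]
for large $M$ (using $\log(M/2) > 0.99\log M$). It then suffices that $\frac{2M}{\log M} < \frac{M^2}{12(\log M)^3}$, i.e. $24(\log M)^2 < M$. This certainly holds for $M \ge e^{3100}$.

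\textbf{Main term.} The key step is the middle inequality, which after cancelling $M/\log M$ is equivalent to
\[
72\,K_n (\log M)^2 < M .
\]
Substituting $K_n < \sqrt{M}$, it suffices to prove $72(\log M)^2 < \sqrt{M}$. The function $\sqrt{M}/(\log M)^2$ is increasing for $M > e^4$, so we only need to check this at the smallest admissible $M$. Writing $M = e^u$ with $u \geq 3100$, the condition becomes $72u^2 < e^{u/2}$. This is true by a huge margin, since $e^{1550}$ vastly exceeds $72\cdot 3100^2$.

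The main obstacle is only bookkeeping: making sure the hypotheses of the cited results hold. First, $M > K_n^2$ must guarantee $M \geq e^{3100}$, so that \eqref{repn3primes} applies. Second, the explicit $\pi(y)$ bound must be valid on the relevant range. Both follow from the size of $K_n$ for $n \ge 10^4$: already $\ell_0 \ge 2^{3\cdot 2^{n+1}/\log}$-type primorial growth gives $\log K_n \gg 3100$. Note that the bound $K_n < \sqrt{M}$ replaces any need to compare $K_n$ with $M$ more finely.
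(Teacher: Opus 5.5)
Your proof is correct and follows essentially the paper's route: you bound $3\pi(M/2)$ by a constant times $M/\log M$ via Rosser--Schoenfeld, and you use $K_n<\sqrt{M}$ so that everything reduces to $\sqrt{M}/(\log M)^2$ being large. Your split into thirds merely replaces the paper's factoring out of $M^{3/2}/\log M$ and comparison with the constant $13$. The only blemish is the garbled phrase about primorial growth, which you do not need, since $K_n\ge 2^n\ge 2^{10^4}$ already gives $\log M>2\cdot 10^4\log 2>3100$.
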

    
    \begin{proof}
        From~\cite[Corollary 1, eq. (3.6), p. 69]{RS}, we have, for $M> 4$, \[\pi\left(\frac{M}{2}\right) < 1.25506 \frac{M}{2(\log M - \log 2)} < 2\frac{M}{\log M}.\] Therefore it is enough to show that, for $n \geq 10^4$, \[\frac{M^{\frac{3}{2}}}{\log M}\left(\frac{M^{\frac{1}{2}}}{4(\log M)^2} - \frac{\log M}{M^{\frac{3}{2}}} - \frac{6K_n}{M^{\frac{1}{2}}}- \frac{6}{M^{\frac{1}{2}}}\right) > 0.\] Since, for $n \geq 10^4$, $M> K_n^2 = 2^{2n}\ell_0^2 > 4^{10^4}\ell_0^2 > 10^{10}$ (in particular $M > 2\cdot 10^{24}$, so that Lemma~\ref{smallprimes} applies), we have, for all $M>10^{10}$, \[\frac{\log M}{M^{\frac{3}{2}}} + \frac{6K_n}{M^{\frac{1}{2}}} + \frac{6}{M^{\frac{1}{2}}} < 1 + 6 + 6 = 13 < \frac{M^{\frac{1}{2}}}{4(\log M)^2},\] which concludes the proof.    
    \end{proof}
   \medskip
     \bibliographystyle{alpha}
    \bibliography{Bibliography}
    \end{document}